%
%
%
%
\documentclass[10pt,twoside,english]{amsart}
%
%
\normalsize
%
%
\advance\oddsidemargin by -1.0cm
\advance\evensidemargin by -1.0cm
\textwidth=147mm
\textheight=220mm
%
\usepackage{amssymb}
\usepackage{babel}
\usepackage{amsmath}
\usepackage{amscd}
\usepackage{epsfig}
\usepackage{psfrag}
\let\mathg\mathfrak
\theoremstyle{plain}
\newtheorem{cor}{Corollary}[section]
\newtheorem{lem}{Lemma}[section]
\newtheorem{thm}{Theorem}[section]
\newtheorem{prop}{Proposition}[section]
\theoremstyle{definition}
\newtheorem{exa}{Example}[section]
\newtheorem{NB}{Remark}[section]

\newtheorem{dfn}{Definition}[section]

\renewcommand{\labelenumi}{$(\theenumi)$}
%
%
%
\newcommand{\bdm}{\begin{displaymath}}
\newcommand{\edm}{\end{displaymath}}
\newcommand{\be}{\begin{equation}}
\newcommand{\ee}{\end{equation}}
\newcommand{\ba}[1]{\begin{array}{#1}}
\newcommand{\ea}{\end{array}}
\newcommand{\bea}[1][]{\begin{eqnarray#1}}
\newcommand{\eea}[1][]{\end{eqnarray#1}}
\newcommand{\bqr}{\begin{eqnarray}}
\newcommand{\eqr}{\end{eqnarray}}
\newcommand{\bqrs}{\begin{eqnarray*}}
\newcommand{\eqrs}{\end{eqnarray*}}
\newcommand{\btab}{\begin{tabular}}
\newcommand{\etab}{\end{tabular}}

\newcommand{\x}{\times}

\newcommand{\ox}{\otimes}

\newcommand{\ra}{\rightarrow}

\newcommand{\lan}{\left\langle}
\newcommand{\ran}{\right\rangle}

\newcommand{\Id}{\ensuremath{\mathrm{Id}}}
\newcommand{\tr}{\ensuremath{\mathrm{tr}}}

\newcommand{\univ}{\ensuremath{\mathrm{univ}}}
\newcommand{\tw}{\ensuremath{\mathrm{tw}}}


\newcommand{\R}{\ensuremath{\mathbb{R}}}

\newcommand{\M}{\ensuremath{\mathcal{M}}}

\newcommand{\D}{\slash{\!\!\!\!D}}

\newcommand{\T}{\ensuremath{\mathcal{T}}}
\newcommand{\TM}{\ensuremath{\mathcal{T}\!M}}
\newcommand{\Hol}{\ensuremath{\mathrm{Hol}}}


\newcommand{\vphi}{\ensuremath{\varphi}}

\newcommand{\hut}{\wedge}

\newcommand{\kr}{\ensuremath{\mathcal{R}}}
\newcommand{\Ric}{\ensuremath{\mathrm{Ric}}}
\newcommand{\Scal}{\ensuremath{\mathrm{Scal}}}
\newcommand{\Scalg}{\ensuremath{\mathrm{Scal}^g}}
\newcommand{\Scalgmin}{\ensuremath{\mathrm{Scal}^g_{\min}}}

\newcommand{\Ad}{\ensuremath{\mathrm{Ad}\,}}
\newcommand{\Adtilde}{\ensuremath{\widetilde{\mathrm{A}}\mathrm{d}\,}}

\newcommand{\diag}{\ensuremath{\mathrm{diag}}}


\newcommand{\GL}{\ensuremath{\mathrm{GL}}}

\newcommand{\so}{\ensuremath{\mathg{so}}}
\newcommand{\SO}{\ensuremath{\mathrm{SO}}}
\newcommand{\Spin}{\ensuremath{\mathrm{Spin}}}

\newcommand{\Orth}{\ensuremath{\mathrm{O}}}

\newcommand{\m}{\ensuremath{\mathfrak{m}}}

%
%
%
\begin{document}
\def\haken{\mathbin{\hbox to 6pt{%
                 \vrule height0.4pt width5pt depth0pt
                 \kern-.4pt
                 \vrule height6pt width0.4pt depth0pt\hss}}}
    \let \hook\intprod
\setcounter{equation}{0}
%
%
\thispagestyle{empty}
%
\title[Twistorial eigenvalue estimates]{Twistorial eigenvalue estimates
for generalized Dirac operators with torsion}
%
%
%
\author{Ilka Agricola}
\author{Julia Becker-Bender}
\author{Hwajeong Kim}
\address{\hspace{-5mm}
Ilka Agricola, Julia Becker-Bender \newline
Fachbereich Mathematik und Informatik \newline
Philipps-Universit\"at Marburg\newline
Hans-Meerwein-Strasse \newline
D-35032 Marburg, Germany\newline
{\normalfont\ttfamily agricola@mathematik.uni-marburg.de}\newline
{\normalfont\ttfamily beckbend@mathematik.uni-marburg.de}}

\address{\hspace{-5mm}
Hwajeong Kim \newline
Department of Mathematics\newline
Hannam University\newline
Daejeon 306-791, Republic of Korea\newline
{\normalfont\ttfamily hwajkim@hnu.kr}\newline
}
%
%
\subjclass{(MSC 2010):  53 C 25-29; 58 J 50; 58 J 60}
%
\pagestyle{headings}
\begin{abstract}
We study the Dirac spectrum on compact
Riemannian spin manifolds $M$ equipped with a metric connection $\nabla$ with skew 
torsion $T\in\Lambda^3M$ by means of twistor theory.
An optimal lower bound for the first eigenvalue of the
Dirac operator with torsion is found that generalizes Friedrich's classical 
Riemannian estimate. We also determine a novel twistor and Killing equation
with torsion and use it to discuss the case in which the minimum is attained 
in the bound.
\end{abstract}
%
\maketitle
%

%
%
\section{Introduction \& summary}
%
\noindent
This paper is devoted to a systematic investigation--via twistor theory--of
the Dirac spectrum of compact Riemannian spin manifolds $(M^n,g)$ with a metric 
connection $\nabla$ with skew-symmetric torsion $T\in\Lambda^3(M^n)$.
The manifolds we consider are non-integrable
geometric structures endowed with the characteristic connection
$\nabla=\nabla^c$ (see the survey \cite{Agricola06}). A.~Gray was
the first to investigate 
manifolds and connections of this kind using the notion of weak holonomy
\cite{Gray71}. Nowadays many different ways exist to tackle the issue of 
weak holonomy, and they can all be described in our setting: to name but a few, 
the intrinsic-torsion approach  (\cite{Salamon89}, \cite{Swann00}), or that 
involving  the critical points of some distinguished functional defined 
on differential forms \cite{Hitchin00}. 
The Dirac operator that one should look at is, hence, not  the
one associated with $\nabla^c$, but rather 
$$\D = D^g + \frac{1}{4}T,$$ 
where $D^g$ is the Riemannian Dirac operator. This generalized Dirac 
operator with torsion  corresponds to the torsion form 
$T/3$ (see  \cite{Bismut}, \cite{Agricola&F04a}, \cite{Agricola&F04b}).
As a matter of fact, $\D$ coincides with the so-called
``cubic Dirac operator'' studied by B.~Kostant (\cite{Kostant99},
\cite{Agricola03}) on naturally reductive spaces, and also with the
Dolbeault operator of a Hermitian manifold (\cite{Bismut}, \cite{Gauduchon97}). 
More recently, theoretical physicists from superstring theory have begun 
to take interest in the operator $\D$ and its symmetries \cite{Houri&KWY10}.
To obtain spectral estimates it turns out crucial to require $\nabla^c T=0$, 
for it is this 
conservation law that ensures the compatibility of
the actions of $\nabla^c$ and $T$ (viewed as an endomorphism) on the spinor
bundle.  
There are several manifolds that are  classically known to admit parallel
characteristic torsion, namely 
nearly K\"ahler manifolds, Sasakian manifolds, nearly parallel $G_2$-manifolds,
and naturally reductive spaces; these classes have been considerably enlarged in
more recent  work (see \cite{Vaisman79}, \cite{Gauduchon&O98},
\cite{Friedrich&I1}, \cite{Alexandrov03}, \cite{Alexandrov&F&S04},
\cite{Fr07a}, \cite{Schoemann}), eventually leading to a host of instances 
to which our results can be applied.

On a Riemannian manifold $(M^n,g)$ Penrose's twistor operator 
($\nabla^g$ denotes  the Levi-Civita connection)
\bdm\tag{$*$}
P \ :=\ \sum_{k=1}^ne_k\ox \left[\nabla^g_{e_k}\psi+\frac{1}{n}e_k\cdot
  D^g\psi  \right]
\edm
is known to encode much information on both the spinorial behaviour and the 
conformal geometry of the underlying manifold  
(see \cite{Lichnerowicz87}, \cite{Lichnerowicz88}, \cite{Hijazi&L88},
\cite{Friedrich89}, \cite{Habermann90}).
The key point to us is that the twistor operator can be used to prove 
Friedrich's estimate for the smallest eigenvalue $\lambda^g$ 
 of $(D^g)^2$ (\cite{Friedrich80}, \cite{Semmelmann98})
\bdm\tag{$**$}
\lambda^g\ \geq\ \frac{n}{4(n-1)}\min_{x\in M^n} \Scalg,
\edm
and to discuss the case where equality holds. 
But whereas Dirac operators with torsion are by now well-established analytical
tools in the study of special geometric structures, all attempts to develop a sort of twistor
theory with torsion have failed, so far. The main problem is that
the operator defined by $(*)$ has no straightforward generalization in presence 
of torsion; one might try to replace $\nabla^g$ by the canonical connection $\nabla^c$, 
or equally well substitute $D^g$ with $\D$ or with the Dirac operator
of $\nabla^c$. Yet one realises quickly that either possibility is unlikely to be 
very meaningful from the geometrical viewpoint.

\medskip
In this article we derive a twistor operator with torsion by asking which
generalization of $P$ yields, on a suitable class of geometries
with torsion, a lower estimate for the  smallest eigenvalue of $\D$ that
contains the optimal estimate $(**)$ in the limiting case of vanishing torsion.

Throughout the article we will assume $(M^n,g)$ is an oriented Riemannian
manifold endowed with a metric connection $\nabla^c$ with skew-symmetric torsion
$T\in\Lambda^3(M^n)$. The situation we have in mind is 
that of the characteristic connection of a $G$ structure;
as described in \cite{Agricola06}, this is--if existent--the
unique $G$-invariant metric connection with skew-symmetric torsion, and is 
well understood in all standard geometries. 
It has to be stressed, however, that our results apply to \emph{any}  metric connection
with parallel skew-symmetric torsion.
 It will be useful to consider the one-parameter
family of connections
\bdm
\nabla^s_X Y \ =\ \nabla^g_X Y + 2s\, T(X,Y,-),
\edm
with normalisation chosen so that $\nabla^s$ has torsion 
$T$ if $s=1/4$, whence $\nabla^c =\nabla^{1/4}$. Obviously 
$\nabla^0=\nabla^g$, so $\nabla^s$ can be thought of as a line in the space of 
connections joining the Levi-Civita to the characteristic connection.
For each $s$ the respective scalar curvatures fulfill $\Scal^s=\Scal^g-24s^2 \|T\|^2$.
The connection $\nabla^s$ may be lifted to the spin bundle $\Sigma M$, and 
will be denoted by the same symbol,
\bdm
\nabla^s_X\psi \ = \ \nabla^g_X\psi+ s(X\haken T)\cdot \psi.
\edm
The spin connection $\nabla^s$ induces a twistor operator $P^s$.
At the heart of the paper lies a twistorial integral equation, 
which is the content of Theorem \ref{twistorial-int-FL}.
This leads to a twistorial eigenvalue estimate for $\D$ that improves
all existing eigenvalue estimates known (Corollaries \ref{cor.twistorial-est-mu}
and \ref{cor.twistorial-est}) and has a wider application range than these.
The limiting case is obtained precisely when the eigenspinor is a twistor
spinor for the twistor operator with torsion $P^s$, with $\displaystyle
s=\frac{n-1}{4(n-3)}$;  thus, its torsion is a multiple of the initial 
characteristic torsion \emph{and depends on the dimension $n$ of $M$}, through
the parameter $s$. This is a rather surprising fact, and it 
explains why it had not been possible to guess the `right' twistor operator beforehand,
although  ($*$) for the Riemannian case  already indicates 
that any answer must involve $\dim M$. In Lemma \ref{lem.twistorglg}, we show that
the twistor equation $P^s\psi =0$ is equivalent to the field equation
\bdm
\nabla^c_X\psi +\frac{1}{n}X\cdot \D\psi + \frac{1}{2(n-3)}
(X\wedge T) \cdot\psi \ =\ 0 \ \ \forall X.
\edm
Friedrich's original proof of his estimate relies on a clever deformation of
the Levi-Civita connection, not on twistor techniques. The same idea was 
later used for the operator $\D$ as well, see
\cite{Agricola&F&K08},
 \cite{Kassuba10}. But in contrast to the
Riemannian situation (first described  in \cite{Semmelmann98} and 
\cite{Hijazi98}), the twistor approach yields 
results that \emph{differ} from the deformation ansatz in the presence of 
torsion. Section \ref{sec.discussion} 
thoroughly discusses the estimate obtained, and compares it to the
other available estimates, if any.

In Section \ref{sec.KTS} we prove that  
twistor spinors with torsion generalize Killing
spinors with torsion (as of Definition \ref{Killing-spinor-wt}) in the most natural way, 
and then we discuss the basic geometric properties of both kinds. We compute
the full integrability condition for the existence of Killing 
spinors with torsion (whose details are deferred to Appendix \ref{app.int-cond}). 
This constraint is then used to prove that Einstein-Sasaki manifolds 
cannot admit Killing spinors with torsion (Corollary \ref{cor.noKSonES}). This result,  
albeit obtained as a by-product of the aforementioned discussion, is remarkable 
in its own right. On the other hand, we show that non-trivial twistor, and even Killing, spinors with
torsion do exist: noteworthy instances are certain $5$- and $7$-dimensional Stiefel manifolds 
endowed with their natural contact structures (Examples \ref{exa.Stiefel} and \ref{exa.Stiefel-2}).

It emerges from the treatise that the case of dimension $n=6$ stands out (Section \ref{section.n6}). 
We prove that, in this distinguished situation, the Killing equation and 
the twistor equation \emph{are equivalent} (Corollary \ref{cor.TSequalsKS}).
For nearly K\"ahler manifolds, we can even prove that the
classes of Riemannian Killing spinors, $\nabla^c$-parallel spinors,
 Killing spinors with torsion, and twistor spinors with torsion coincide
(Theorem \ref{thm.nK-all-spinors}).

In the last part of the paper the twistorial approach is applied to
manifolds with reducible characteristic  holonomy.
It is a standard fact that the splitting of the tangent bundle of a Riemannian 
spin manifold $(M^n,g)$ under the action of the Riemannian holonomy group has 
important consequences for the spectrum of the Dirac operator $D^g$ 
(\cite{ECKim04}, \cite{Alexandrov07}); in the simplest
one-dimensional case, that assumption just means that $M$ admits a 
$\nabla^g$-parallel vector field \cite{Alexandrov&G&I98}. 
So in a similar fashion we can consider local products of
manifolds with parallel characteristic torsion, called, for the present purposes, 
\emph{geometries with reducible parallel torsion}; the precise formulation is found in
Definition \ref{dfn.geom-redpargeom}.
We analyse in detail manifolds with reducible parallel torsion
and their curvature properties, the study of which was lacking in the literature. 
We derive the necessary partial Schr\"odinger-Lichnerowicz formulas compatible with the
splitting (Proposition \ref{partial-SL-1}), and from this obtain another interesting spectral 
estimate for $\D$ (Theorem \ref{thm.twistorial-est-prod}): 
roughly speaking, the estimate is the same as in Corollary \ref{cor.twistorial-est},
but now the dimension $n$ of the manifold is replaced by the largest dimension
of a parallel distribution of the tangent bundle. This result is complemented 
by the 
ensuing discussion of the equality case. 
The section's closing result (Theorem \ref{thm.improvement-BK}) 
gives an interesting eigenvalue inequality for local products in the
Riemannian case.
The proof is again based on twistor techniques and, alas, we show that is has no
analogue  for connections with torsion.
%
\section{Review of the universal eigenvalue estimate}\noindent
%
We recall here the generalized Schr\"odinger-Lichnerowicz identities
for Dirac operators with torsion and the eigenvalue estimates one can derive
from them. A crucial  first order
differential operator that will appear in several instances is
\bdm
\mathcal{D}^s\ =\ \sum_{i=1}^n (e_i\haken T)\cdot \nabla^s_{e_i} .
\edm
Contrary to a Dirac operator with torsion, it has no Riemannian
counter part. Furthermore, define (see Appendix \ref{formulas})
\bdm
\sigma_T\ :=\ \frac{1}{2}\sum_i (e_i\haken T)\wedge (e_i\haken T).
\edm
In \cite{Friedrich&I1}, the following identities are proved:
\begin{thm}\label{FI}\label{thm.Fr&I}
%
\begin{enumerate}
\item[]
\item The square of the Dirac operator $D^s$ satisfies the relation
\bdm
(D^s)^2\ =\ \Delta^s+3s\, dT -8s^2\sigma_T+2s\, \delta T - 4s\, \mathcal{D}^s
+\frac{1}{4}\Scal^s.
\edm
\item The anticommutator of  $D^s$ with $T$ is given by
\bdm
D^s T+T D^s\ =\ dT +\delta T - 8s\sigma_T -2\mathcal{D}^s.
\edm
\end{enumerate}
\end{thm}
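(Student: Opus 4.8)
The plan is to prove both identities simultaneously by reducing everything to the Levi-Civita data via the defining formula $\nabla^s_X\psi = \nabla^g_X\psi + s(X\haken T)\cdot\psi$ on spinors, and then organising the resulting terms according to the form-degree of the Clifford contributions they produce. First I would write $D^s = \sum_i e_i\cdot\nabla^s_{e_i} = D^g + s\sum_i e_i\cdot(e_i\haken T)\cdot(-)$; the standard Clifford identity $\sum_i e_i\cdot(e_i\haken T) = 3T$ for $T\in\Lambda^3$ then gives $D^s = D^g + 3s\,T$ as an operator on sections (here $T$ acts by Clifford multiplication). Similarly the connection Laplacian $\Delta^s = (\nabla^s)^*\nabla^s$ must be expanded: $\nabla^s$ differs from $\nabla^g$ by the algebraic term $s(e_i\haken T)\cdot$, so $\Delta^s$ picks up a first-order cross term (which is where $\mathcal{D}^s$ enters) and a zeroth-order term quadratic in $T$ (which is where $\sigma_T$ enters, via $\sum_i (e_i\haken T)\cdot(e_i\haken T)$ and the definition of $\sigma_T$). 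I would also need the covariant derivative of $T$ itself to reappear: differentiating the Clifford action of $T$ produces $dT$ and $\delta T$ through the identities $\sum_i e_i\wedge\nabla^g_{e_i}T$ and $\sum_i e_i\haken\nabla^g_{e_i}T$, which are the standard expressions for $dT$ and $-\delta T$ on a form.

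For part (2) the computation is shorter: $D^sT + TD^s = (D^g+3sT)T + T(D^g+3sT) = (D^gT + TD^g) + 6sT^2$, where $T^2$ denotes the square of Clifford multiplication by $T$. The Riemannian anticommutator $D^gT + TD^g$ is a known formula — it equals $dT + \delta T - 2\mathcal{D}^0$ plus a purely algebraic piece coming from $T^2$; and $T^2$ itself decomposes (via the Clifford algebra) into a multiple of $\sigma_T$ plus a constant $\|T\|^2$ (the scalar part), the latter dropping out of the anticommutator structure in the way recorded. Matching the coefficient of $s$ against the claimed $-8s\sigma_T$ is then bookkeeping. I would derive part (2) first and feed it back into part (1), since the cross term $-4s\mathcal{D}^s$ and the $dT$, $\delta T$, $\sigma_T$ terms in $(D^s)^2$ are most cleanly obtained by writing $(D^s)^2 = (D^g)^2 + 3s(D^gT+TD^g) + 9s^2T^2$, invoking the Riemannian Schrödinger–Lichnerowicz formula $(D^g)^2 = \Delta^g + \tfrac14\Scalg$, and then re-expressing $\Delta^g$ in terms of $\Delta^s$ (this conversion is exactly where the sign and the factor in front of $\mathcal{D}^s$, as well as the passage from $\Scalg$ to $\Scal^s = \Scalg - 24s^2\|T\|^2$, get pinned down).

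The main obstacle I anticipate is purely combinatorial: keeping track of the several distinct quadratic-in-$T$ contributions — the one from $\Delta^g$ vs.\ $\Delta^s$, the one from $9s^2T^2$, and the one hidden in the Riemannian $D^gT+TD^g$ — and verifying that, after using $T^2 = -2\sigma_T + \|T\|^2$ (up to sign conventions) and $\Scal^s = \Scalg - 24s^2\|T\|^2$, all the scalar $\|T\|^2$ pieces recombine correctly so that only the $-8s^2\sigma_T$ term and the curvature term $\tfrac14\Scal^s$ survive. None of the individual Clifford identities ($\sum e_i\cdot(e_i\haken T)=3T$, the expression of $dT$, $\delta T$ as (co)differential sums, the decomposition of $T^2$) is difficult, but assembling them with the right numerical coefficients is the delicate part; I would carry this out in a local orthonormal frame, parallel at a point, to kill all Christoffel-type nuisance terms. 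Since these are the identities established in \cite{Friedrich&I1}, I would finally cross-check the coefficients $3s$, $-8s^2$, $2s$, $-4s$ in (1) and $-8s$, $-2$ in (2) against the limiting case $s=0$ (recovering the classical formulas) and against the characteristic value $s=1/4$.
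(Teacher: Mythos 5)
The paper does not actually prove this theorem: it is quoted from Friedrich--Ivanov \cite{Friedrich&I1} (``In \cite{Friedrich&I1}, the following identities are proved''), so there is no in-paper argument to compare against. Your reduction to the Riemannian case is nevertheless a valid self-contained derivation, and I checked that the bookkeeping closes up. Writing $D^s=D^g+3sT$ gives $(D^s)^2=(D^g)^2+3s(D^gT+TD^g)+9s^2T^2$ and $D^sT+TD^s=(D^gT+TD^g)+6sT^2$; the Riemannian inputs are $(D^g)^2=\Delta^g+\tfrac14\Scalg$ and the \emph{exact} identity $D^gT+TD^g=dT+\delta T-2\mathcal{D}^0$ (there is no additional algebraic piece here --- your phrasing of that formula is off); the conversions are $\Delta^g=\Delta^s-s\,\delta T+2s\,\mathcal{D}^0+s^2(2\sigma_T-3\|T\|^2)$ and $\mathcal{D}^s=\mathcal{D}^0+s\,(2\sigma_T-3\|T\|^2)$ (Lemma \ref{lem.compilation}(5)); combined with $T^2=-2\sigma_T+\|T\|^2$ and $\Scal^s=\Scalg-24s^2\|T\|^2$ these reproduce exactly the coefficients $3s,\,2s,\,-4s,\,-8s^2$ in (1) and $-8s,\,-2$ in (2). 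Two points need tightening. First, in (2) the scalar term $6s\|T\|^2$ arising from $6sT^2$ does not ``drop out'': it is absorbed into $-2\mathcal{D}^s$, which differs from $-2\mathcal{D}^0$ by precisely $-4s\sigma_T+6s\|T\|^2$. Second, in both parts you must perform the substitution $\mathcal{D}^0\to\mathcal{D}^s$ explicitly, since the theorem is stated with $\mathcal{D}^s$; this step is what turns the intermediate $-16s^2\sigma_T$ into the stated $-8s^2\sigma_T$ and feeds the leftover $-6s^2\|T\|^2$ into $\tfrac14\Scal^s$. With those clarifications your plan is complete.
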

Since the spectrum of $\mathcal{D}^s$ is usually beyond control,
these relations are hard to evaluate for, say, non parallel spinor fields.
The following improvement was first proved by Bismut in 
\cite[Thm 1.10]{Bismut}, see also \cite[Thm 6.2]{Agricola&F04a}.
\begin{thm}[Generalized Schr\"odinger-Lichnerowicz formula]\label{thm.gSLF}
For arbitrary torsion $T$, one has the identity
\bdm\tag{$*$}
(D^{s/3})^2\ =\ \Delta^s+ s\, dT +\frac{1}{4}\Scal^g-2s^2\|T\|^2.
\edm
If in addition  $dT=2\sigma_T$, this may be simplified to
\bdm
(D^{s/3})^2\ =\ \Delta^s - s T^2+\frac{1}{4}\Scal^g+ (s-2s^2)\, \|T\|^2.
\edm
\end{thm}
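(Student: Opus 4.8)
The plan is to deduce both identities directly from the two Friedrich--Ivanov formulas of Theorem~\ref{FI}. The point to bear in mind is that $(*)$ pairs the square of $D^{s/3}$ with the connection Laplacian $\Delta^s$ of a \emph{different} parameter, so one should not apply part (1) of Theorem~\ref{FI} at $s/3$ (which would produce $\Delta^{s/3}$ and leave the uncontrollable term $\mathcal D^{s/3}$); instead one keeps the parameter $s$ and re-expresses $D^{s/3}$ through $D^s$. First I would invoke the elementary Clifford identity $\sum_{i=1}^n e_i\cdot(e_i\haken T) = 3\,T$, valid for an arbitrary $3$-form $T$, which combined with $\nabla^s_X\psi=\nabla^g_X\psi+s(X\haken T)\cdot\psi$ yields $D^s = D^g+3s\,T$ and hence
\bdm
D^{s/3}\ =\ D^g + s\,T\ =\ D^s - 2s\,T ;
\edm
in particular, taking $s=\tfrac14$ gives $D^{1/12}=D^g+\tfrac14 T=\D$, so the formula indeed covers the cubic Dirac operator.

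Next I would square this relation, remembering that $D^s$ and (Clifford multiplication by) $T$ do not commute:
\bdm
(D^{s/3})^2\ =\ (D^s)^2\ -\ 2s\,\bigl(D^s T + T D^s\bigr)\ +\ 4s^2\,T^2 .
\edm
Now substitute part (1) of Theorem~\ref{FI} for $(D^s)^2$ and part (2) for the anticommutator $D^s T + T D^s$. Two cancellations take place of their own accord: the $\delta T$ terms ($2s-2s=0$) and, decisively, the $\mathcal D^s$ terms ($-4s+4s=0$) --- the latter is exactly what makes $D^{s/3}$, rather than $D^s$ itself, have a usable square. After inserting $\Scal^s=\Scal^g-24s^2\|T\|^2$ one is left with
\bdm
(D^{s/3})^2\ =\ \Delta^s + s\,dT + 8s^2\,\sigma_T + \tfrac14\Scal^g - 6s^2\|T\|^2 + 4s^2\,T^2 .
\edm

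The remaining three terms are then collapsed by the Clifford-algebra identity recorded in Appendix~\ref{formulas}, $T\cdot T = \|T\|^2 - 2\,\sigma_T$, valid for any $3$-form (with $\sigma_T$ on the right read as the Clifford action of the $4$-form defined above): it rewrites $8s^2\sigma_T+4s^2T^2$ as $4s^2\|T\|^2$, which together with $-6s^2\|T\|^2$ gives $-2s^2\|T\|^2$ and establishes $(*)$. For the second assertion I would feed the hypothesis $dT=2\sigma_T$ into $(*)$ and apply the same identity once more, now as $2\sigma_T=\|T\|^2-T\cdot T$, so that $s\,dT=2s\,\sigma_T$ becomes $s\|T\|^2-s\,T^2$; collecting terms then yields $(D^{s/3})^2=\Delta^s - s\,T^2 + \tfrac14\Scal^g + (s-2s^2)\|T\|^2$.

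Beyond the two cited identities of Theorem~\ref{FI}, this computation is purely formal. The only step needing genuine care is the Clifford identity $T\cdot T=\|T\|^2-2\sigma_T$: one has to keep the scalar $\|T\|^2$, the $4$-form $\sigma_T$, and their actions on the spinor bundle $\Sigma M$ rigorously apart, and be consistent about the sign convention in the Clifford algebra. I do not expect any serious obstacle; the conceptual heart of the argument is the simultaneous disappearance of the $\delta T$ and $\mathcal D^s$ contributions, which is precisely what isolates $\D$ (and more generally $D^{s/3}$) among the Dirac-type operators one can form from $\nabla^c$ and $T$.
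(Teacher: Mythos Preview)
Your derivation is correct: the substitution $D^{s/3}=D^s-2sT$ followed by squaring and inserting both parts of Theorem~\ref{FI} does produce the cancellation of the $\delta T$ and $\mathcal D^s$ terms exactly as you describe, and the remaining simplification via $T^2=\|T\|^2-2\sigma_T$ finishes $(*)$; the second identity then follows immediately from $dT=2\sigma_T$.

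Note, however, that the paper does not give its own proof of this theorem at all: it is quoted as a known result, with references to Bismut \cite[Thm~1.10]{Bismut} and \cite[Thm~6.2]{Agricola&F04a}. So there is no ``paper's proof'' to compare against. What you have supplied is in fact a clean derivation of the generalized Schr\"odinger--Lichnerowicz formula from the Friedrich--Ivanov identities of Theorem~\ref{FI}; this is a perfectly legitimate and self-contained route, and it has the pedagogical virtue of making transparent \emph{why} the torsion rescaling $s\mapsto s/3$ is the distinguished one---namely, it is precisely the rescaling that eliminates the first-order operator $\mathcal D^s$ from the square.
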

This is in particular satisfied if the torsion is parallel for  $s=1/4$.
In this case, the last relation has a remarquable
consequence. For then $\Delta^s$ commutes with $T$, and this is
trivially correct for the multiplication by $T^2$ and  by scalars,
hence (compare \cite[Prop. 3.4]{Agricola&F04b})
\bdm
(D^{s/3})^2\circ T \ =\ T\circ (D^{s/3})^2.
\edm
It is therefore possible to split the spin bundle in the orthogonal sum
of its eigenbundles for the $T$ action,
\bdm
\Sigma M\ =\ \bigoplus_{\mu}\Sigma_{\mu},
\edm
and to consider $(D^{s/3})^2$ on each of them, since
$\nabla^s$ and $(D^{s/3})^2$ both preserve this splitting.
We shall henceforth denote the different eigenvalues of $T$ on $\Sigma M$ by
$\mu_1,\ldots,\mu_k$. We therefore obtain the following 
\emph{universal eigenvalue estimate}
for the first eigenvalue $\lambda=\lambda(\D^2)$ of  $\D^2$ for the
connection with torsion $T/3$.
We state it separately on $\Sigma_\mu$ and the whole spin bundle
$\Sigma M$, for examples teach us that going over to $\Sigma M$
often means throwing away too much detail information.
\begin{thm}[Universal eigenvalue estimate]\label{thm.universal-est}
For $\nabla^c T=0$, the smallest eigenvalue $\lambda$ of $\D^2$
on $\Sigma_\mu$ satisfies the inequality
\bdm
\lambda\big(\D^2\big|_{\Sigma_\mu}\big)\ \geq\
\frac{1}{4}\Scalgmin + \frac{1}{8}\|T\|^2 -\frac{1}{4}\, \mu^2\ =:\
\beta_{\univ}(\mu),
\edm
and equality occurs if and only if $\Scalg$ is constant and
$\Sigma_\mu$ contains a $\nabla^c$-parallel spinor.
For the smallest eigenvalue $\lambda$ of $\D^2$ on the whole
spin bundle $\Sigma$, one thus obtains the estimate
\bdm
\lambda\ \geq\ \frac{1}{4}\Scal^g + \frac{1}{8}\|T\|^2 -\frac{1}{4}\,
\max(\mu_1^2,\ldots,\mu_k^2)\ =:\ \beta_{\univ}.
\edm
\end{thm}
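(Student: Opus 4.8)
The strategy is the classical twistorial Bochner argument, adapted to the eigenbundle $\Sigma_\mu$ on which $T$ acts by the scalar $\mu$ and on which $\nabla^c T=0$ guarantees that everything is compatible. First I would take $\psi$ to be an eigenspinor of $\D^2$ on $\Sigma_\mu$ with eigenvalue $\lambda$ (one may assume $\psi$ lies in a single $\Sigma_\mu$ since $(D^{s/3})^2$ preserves the splitting). The key input is the simplified Schrödinger--Lichnerowicz formula of Theorem \ref{thm.gSLF}: for parallel torsion and $s=1/4$ it reads
\bdm
\D^2\ =\ \Delta^{1/4} - \tfrac14\, T^2 + \tfrac14\Scal^g + \big(\tfrac14-\tfrac18\big)\|T\|^2
\ =\ \Delta^{1/4} + \tfrac14\Scal^g + \tfrac18\|T\|^2 - \tfrac14 T^2,
\edm
where I used $dT=2\sigma_T$ for parallel $T$. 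On $\Sigma_\mu$ the operator $T^2$ acts by $\mu^2$, so
\bdm
\D^2\big|_{\Sigma_\mu}\ =\ \Delta^{1/4} + \tfrac14\Scal^g + \tfrac18\|T\|^2 - \tfrac14\mu^2 .
\edm

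Next I would pair this identity against $\psi$ and integrate over $M$. The connection Laplacian $\Delta^{1/4}=(\nabla^c)^*\nabla^c$ is nonnegative, so $\int_M \langle \Delta^{1/4}\psi,\psi\rangle \ge 0$; using $\|T\|^2$ constant (it is, since $\nabla^c T=0$ forces $d\|T\|^2=0$ on a connected manifold, or one simply replaces it by its value) and $\Scal^g \ge \Scalgmin$, integration yields
\bdm
\lambda \int_M |\psi|^2\ =\ \int_M \langle\D^2\psi,\psi\rangle\ \ge\ \Big(\tfrac14\Scalgmin + \tfrac18\|T\|^2 - \tfrac14\mu^2\Big)\int_M|\psi|^2,
\edm
which is exactly $\lambda \ge \beta_{\univ}(\mu)$. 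This proves the first inequality. For the whole spin bundle, the smallest eigenvalue $\lambda$ of $\D^2$ on $\Sigma M$ is attained on some $\Sigma_{\mu_j}$, so $\lambda \ge \beta_{\univ}(\mu_j) \ge \tfrac14\Scal^g + \tfrac18\|T\|^2 - \tfrac14\max_i\mu_i^2 = \beta_{\univ}$, giving the second inequality. (For the global statement one uses $\Scal^g$ rather than its minimum, matching the stated form.)

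For the equality discussion: equality in the chain forces $\int_M\langle\Delta^{1/4}\psi,\psi\rangle = 0$, hence $\nabla^c\psi=0$, i.e.\ $\psi$ is a $\nabla^c$-parallel spinor lying in $\Sigma_\mu$; and it forces $\Scal^g = \Scalgmin$ pointwise on the support of $\psi$, which (since a $\nabla^c$-parallel spinor has constant pointwise norm and hence vanishes nowhere) means $\Scal^g$ is constant. Conversely, if $\Scalg$ is constant and $\Sigma_\mu$ contains a $\nabla^c$-parallel spinor $\psi$, then the $\Sigma_\mu$-restricted SL-formula immediately gives $\D^2\psi = (\tfrac14\Scal^g + \tfrac18\|T\|^2 - \tfrac14\mu^2)\psi = \beta_{\univ}(\mu)\psi$, so the bound is attained.

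The only genuinely delicate point is justifying $dT = 2\sigma_T$ under the hypothesis $\nabla^c T = 0$, so that the \emph{simplified} form of Theorem \ref{thm.gSLF} applies with $-sT^2$ in place of $s\,dT$; this is the identity recalled right after Theorem \ref{thm.gSLF} ("This is in particular satisfied if the torsion is parallel for $s=1/4$"), and is the one structural fact that makes the $-\tfrac14\mu^2$ term appear cleanly. Everything else is the standard nonnegativity-of-$\Delta$ argument; the subtlety is purely in invoking the right version of the SL-formula and in the constancy arguments ($\|T\|^2$ and $\Scal^g$) needed for the equality case.
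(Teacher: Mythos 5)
Your proof is correct and is exactly the argument the paper intends (the theorem is stated there without an explicit proof, as a direct consequence of the simplified Schr\"odinger--Lichnerowicz formula with $s=1/4$, the $T$-eigenbundle splitting, and nonnegativity of $\Delta^{1/4}=(\nabla^c)^*\nabla^c$). Your handling of the equality case, including the constancy of $\Scalg$ via the nowhere-vanishing norm of a $\nabla^c$-parallel spinor, matches the standard discussion.
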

Equality occurs if and only if the scalar curvature is constant,
and the eigenspinor is $\nabla^c$-parallel and a $T^2$-eigenspinnor
with eigenvalue $\max(\mu_1^2,\ldots,\mu_k^2)$;
this can indeed happen in some special geometries (compare 
Section \ref{subsection.par-spin}). If $T=0$, this is an estimate
for the Riemannian Dirac operator that fails to
be optimal. It was improved 1980 by Thomas Friedrich
by a clever deformation trick for the Levi-Civita connection.
This method, which we will (somehow vaguely) call the \emph{deformation
method} in this paper, was successfully applied to Dirac operators with
torsion (\cite{Agricola&F&K08}, 
\cite{Kassuba10}. Nevertheless,
an optimal estimate could not be derived in all cases of interest and many
open questions remain.

An alternative approach to Friedrich's inequality is by
twistorial techniques. One goal of this article is thus to
work out this ansatz in detail for connections with torsion,
and to improve the results obtaind by the deformation method.
Contrary to the Riemannian case, the two approaches turn out not to be
equivalent.
%
%
\section{The  twistorial eigenvalue estimate}\noindent
%
If $m: TM\ox \Sigma M\ra\Sigma M$ denotes Clifford multiplication,
the projection $p: TM\ox \Sigma M\ra\ker m \subset TM\ox \Sigma M$
is locally given by
\bdm
p(X\ox \psi)\  =\ X\ox \psi+\frac{1}{n}\sum_{k=1}^n e_k\ox e_k
\cdot X\cdot\psi.
\edm
The \emph{Penrose-} or \emph{Twistor operator} is the composition
$P^s :=p\circ\nabla^s$. Locally,
\bdm
P^s\psi=\sum_{k=1}^ne_k\ox \{\nabla^s_{e_k}\psi+\frac{1}{n}e_k\cdot D^s\psi \}\,.
\edm
A spinor  $\psi$ is called a  \emph{twistor spinor} if it lies in the
kernel of $P^s$: $P^s\psi=0$. This is equivalent to the
\emph{twistor equation} (for still arbitrary parameter value $s$)
\bdm
\nabla^s_X\psi +\frac{1}{n}X\cdot D^s\psi\ =\ 0,
\edm
which has to hold for any vector field $X$.
One easily checks that some properties of Riemannian twistor spinors
(\cite[Section 1.4, Thms 2, 3]{BFGK})
carry over without modification to the case with torsion.
We omit the proof.
\begin{thm}\label{thm.TS-props}
\begin{enumerate}
\item[]
\item $\psi$ is a twistor spinor if and only if the following
condition holds for
any vector fields $X,Y$:
\bdm
X\nabla^s_Y\psi + Y\nabla^s_X\psi \ =\ \frac{2}{n}g(X,Y)D^s\psi.
\edm
\item $\psi$ is a twistor spinor if and only if the expression
 $X\cdot\nabla^s_X\psi$ does not depend on the unit vector field $X$.
\item Any  twistor spinor $\psi$ satisfies: \ $(D^s)^2\psi = n\,
\Delta^s \psi$.
\item Any spinor field $\vphi$ satisfies:
\ $\displaystyle \|P^s\vphi\|^2 +\frac{1}{n}\|D^s\vphi\|^2 =
\|\nabla^s \vphi\|^2 $.
\end{enumerate}
\end{thm}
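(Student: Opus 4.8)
The plan is to prove the four statements in sequence, mimicking the classical Riemannian arguments (e.g.\ \cite{BFGK}, \cite{Friedrich80}) with $\nabla^g$ replaced by $\nabla^s$ and $D^g$ by $D^s$, since the twistor equation is a pointwise algebraic constraint on $\nabla^s\psi$ and none of these properties sees curvature. For (4), I would start from the orthogonal decomposition $T M\otimes\Sigma M=\ker m\oplus(\ker m)^\perp$, where $(\ker m)^\perp$ is the image of the Clifford-multiplication adjoint and is one-dimensional in the $TM$-factor direction in the sense captured by the formula for $p$. Applying $\nabla^s$ to $\psi$ and splitting $\nabla^s\psi=P^s\psi+(1-p)\nabla^s\psi$ into its two orthogonal components, one computes $(1-p)\nabla^s\psi=-\tfrac1n\sum_k e_k\otimes e_k\cdot D^s\psi$, whose pointwise norm-square is $\tfrac1n\|D^s\psi\|^2$ because $\sum_k\langle e_k\cdot D^s\psi,\,e_j\cdot e_k\cdot D^s\psi\rangle$ collapses by the Clifford relations and $e_k\cdot$ being skew-adjoint. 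Orthogonality of the two summands then gives $\|\nabla^s\psi\|^2=\|P^s\psi\|^2+\tfrac1n\|D^s\psi\|^2$ pointwise, hence after integration; this identity makes no use of $\nabla^s T=0$ and holds for every $\varphi$.

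For (1), the forward direction is immediate: if $P^s\psi=0$ then $\nabla^s_X\psi=-\tfrac1n X\cdot D^s\psi$ for all $X$, and symmetrising in $X,Y$ and using $X\cdot Y+Y\cdot X=-2g(X,Y)$ yields $X\cdot\nabla^s_Y\psi+Y\cdot\nabla^s_X\psi=-\tfrac1n(X\cdot Y+Y\cdot X)\cdot D^s\psi=\tfrac2n g(X,Y)D^s\psi$. For the converse one contracts the displayed identity with $Y=X=e_k$ and sums: $\sum_k e_k\cdot e_k\cdot\nabla^s_{e_k}\psi$ — more precisely, set $Y=e_k$, multiply by $e_k\cdot$ on the left, and sum over $k$ — to recover $D^s\psi$ on the left and a multiple of $D^s\psi$ on the right, pinning down the constant; substituting back shows each $\nabla^s_X\psi$ equals $-\tfrac1n X\cdot D^s\psi$, i.e.\ $P^s\psi=0$. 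Statement (2) is then a reformulation: $X\cdot\nabla^s_X\psi$ for a unit field $X$ equals, by the twistor equation, $-\tfrac1n X\cdot X\cdot D^s\psi=\tfrac1n D^s\psi$, manifestly independent of $X$; conversely, polarising the assumption $X\cdot\nabla^s_X\psi=Y\cdot\nabla^s_Y\psi$ along $X\mapsto X+Y$ produces exactly the symmetric identity of (1).

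For (3), assume $P^s\psi=0$, so $\nabla^s_{e_k}\psi=-\tfrac1n e_k\cdot D^s\psi$. Then $\Delta^s\psi=-\sum_k\nabla^s_{e_k}\nabla^s_{e_k}\psi$ (plus lower-order connection terms that cancel in the usual way when one writes $\Delta^s=(\nabla^s)^*\nabla^s$) $=\tfrac1n\sum_k\nabla^s_{e_k}(e_k\cdot D^s\psi)$; carrying the $\nabla^s$ through Clifford multiplication and using the twistor equation once more on the $\nabla^s_{e_k}(e_k\cdot)$ term together with $\sum_k e_k\cdot e_k\cdot=-n$ collapses the right-hand side to $\tfrac1n D^s(D^s\psi)=\tfrac1n(D^s)^2\psi$, which is the claim. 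The one point requiring care — and the only place a reader might balk — is making the connection-term bookkeeping in (3) and (4) clean: one must use that $\nabla^s$ is metric (so it differentiates the fibre inner product by Leibniz) and that $\nabla^s$ is a Clifford-module connection for the \emph{same} Clifford multiplication, i.e.\ $\nabla^s_X(Y\cdot\psi)=(\nabla^s_X Y)\cdot\psi+Y\cdot\nabla^s_X\psi$ — true because the $s$-term $s(X\haken T)\cdot$ added to $\nabla^g$ is itself a skew endomorphism commuting past Clifford multiplication up to the torsion correction, which is absorbed into $\nabla^s Y$. Granting this, every step is the verbatim Riemannian computation, which is exactly why the authors say they omit the proof.
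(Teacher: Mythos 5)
Your proposal is correct and is exactly the argument the paper has in mind: the authors omit the proof precisely because these are the classical Riemannian twistor-spinor computations (cf.\ \cite{BFGK}), and they go through verbatim once one notes, as you do, that $\nabla^s$ is a metric Clifford-module connection, i.e.\ $\nabla^s_X(Y\cdot\psi)=(\nabla^s_XY)\cdot\psi+Y\cdot\nabla^s_X\psi$ (which follows from $(X\haken T)\cdot Y=Y\cdot(X\haken T)+2T(X,Y,-)$). All four steps --- the orthogonal splitting of $\nabla^s\psi$ for (4), the symmetrization/contraction for (1), polarization for (2), and the synchronous-frame computation of $\Delta^s$ for (3) --- are sound.
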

The following calculation is fundamental for the twistorial estimate.
\begin{lem}\label{lem.fund-calc}
\bea[*]
(D^{s/3})^2 - \frac{1}{n} (D^s)^2 & = &
\frac{n-1}{n} \left[ D^0 + \frac{s(n-3)}{n-1} T\right]^2
+ \frac{4s^2}{1-n}T^2\\
& = & \frac{n-1}{n} \left[ D^s - \frac{2sn}{n-1} T\right]^2 + \frac{4s^2}{1-n}T^2.
\eea[*]
\end{lem}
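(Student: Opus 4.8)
The identity is of purely algebraic nature: it records how the squares of the Dirac operators in the family $\{\nabla^s\}$ decompose once everything is written in terms of $D^0=D^g$ and Clifford multiplication by the torsion three-form. The plan is, first, to normalise the operators, and then to expand both sides and compare coefficients.

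\emph{Step 1 (normalisation).} Because $\nabla^s_X\psi=\nabla^g_X\psi+s(X\haken T)\cdot\psi$ and, for a vector $X$ and a form $\alpha$, $X\cdot(X\haken\alpha)=X\wedge(X\haken\alpha)-X\haken(X\haken\alpha)$, summation over an orthonormal frame gives $\sum_k e_k\cdot(e_k\haken T)=3\,T$: the double interior product drops out, and $\sum_k e_k\wedge(e_k\haken T)=3T$ for a $3$-form. Hence
\[ D^s=D^0+3s\,T,\qquad D^{s/3}=D^0+s\,T, \]
where from now on $T$ denotes Clifford multiplication by the form $T$. In particular the two right-hand sides of the Lemma coincide \emph{verbatim}: from $3-\tfrac{2n}{n-1}=\tfrac{n-3}{n-1}$ one gets
\[ D^s-\frac{2sn}{n-1}\,T=D^0+3s\,T-\frac{2sn}{n-1}\,T=D^0+\frac{s(n-3)}{n-1}\,T, \]
so it is enough to prove the first equality.

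\emph{Step 2 (expansion and comparison).} Put $A:=D^0$ and $B:=T$; these do \emph{not} commute, so the symmetric combination $AB+BA$ must be carried along unsimplified (its precise value plays no role, as it will occur with the same coefficient on both sides). Expanding the left-hand side as a polynomial in $s$ gives
\[ (A+sB)^2-\tfrac1n(A+3sB)^2=\tfrac{n-1}{n}A^2+\tfrac{s(n-3)}{n}(AB+BA)+\tfrac{s^2(n-9)}{n}B^2, \]
while the first right-hand side expands to
\[ \tfrac{n-1}{n}\Bigl(A+\tfrac{s(n-3)}{n-1}B\Bigr)^2-\tfrac{4s^2}{n-1}B^2=\tfrac{n-1}{n}A^2+\tfrac{s(n-3)}{n}(AB+BA)+\Bigl(\tfrac{s^2(n-3)^2}{n(n-1)}-\tfrac{4s^2}{n-1}\Bigr)B^2. \]
The coefficients of $A^2$ and of $AB+BA$ match on sight, and those of $B^2$ match by the elementary identity
\[ \frac{(n-3)^2}{n(n-1)}-\frac{4}{n-1}=\frac{(n-3)^2-4n}{n(n-1)}=\frac{n^2-10n+9}{n(n-1)}=\frac{(n-1)(n-9)}{n(n-1)}=\frac{n-9}{n}. \]
This proves the Lemma.

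There is no real obstacle in this argument. The two points that call for attention are the normalisation $D^s=D^0+3s\,T$ --- note that it is $D^{s/3}$, not $D^s$, which plays the role of the operator $\D$ entering the generalized Schr\"odinger-Lichnerowicz formula (Theorem \ref{thm.gSLF}) --- and the non-commutativity of $D^0$ with $T$, which forbids simplifying $AB+BA=D^0T+TD^0$; fortunately, the whole structure of the identity is arranged so that this term never needs to be touched.
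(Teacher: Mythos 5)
Your proof is correct and is essentially the paper's own argument: both expand the difference in powers of $s$, carry $D^0T+TD^0$ along as an unsimplified symbol, and reduce everything to the coefficient identity $\frac{(n-3)^2}{n(n-1)}-\frac{4}{n-1}=\frac{n-9}{n}$; the paper merely organises this as a completion of the square (setting $\mu=s\frac{n-3}{n-1}$) rather than expanding both sides and matching coefficients. Your explicit check that the two right-hand sides agree, via $D^s=D^0+3sT$ and $3-\frac{2n}{n-1}=\frac{n-3}{n-1}$, is a small but welcome addition that the paper leaves implicit.
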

\begin{proof}
Consider the difference ($D^0=D^g$):
\bea[*]
(D^{s/3})^2-\frac{1}{n}(D^s)^2 &=& (D^0+sT)^2-\frac{1}{n}(D^0+3sT)^2\\
&=&(D^0)^2+s^2T^2+s(D^0T+TD^0)-\frac{1}{n}[(D^0)^2+9s^2T^2+3s(D^0T+TD^0)]\\
&=&\left(1-\frac{1}{n}\right)(D^0)^2+s^2\left(1-\frac{9}{n}\right)T^2
+s\left(1-\frac{3}{n}\right)(D^0T+TD^0)\\
&=& \frac{n-1}{n}[(D^0)^2+s^2\frac{n-9}{n-1}T^2+s\frac{n-3}{n-1}(D^0T+TD^0)].
\eea[*]
The square of any Dirac operator $D^0+\mu T$ can be expanded into
\bdm
(D^0+\mu T)^2=(D^0)^2+\mu(D^0T+TD^0)+\mu^2T^2\,.
\edm
If we set $\mu=s\frac{n-3}{n-1}$, the difference above may be
rewritten as
\bea[*]
\qquad (D^{s/3})^2-\frac{1}{n}(D^s)^2&=&
\frac{n-1}{n}\left[\big(D^0+\mu T\big)^2-s^2\frac{4n}{(n-1)^2}T^2\right]\\
&=&\frac{n-1}{n}\left[\left(D^0+s\frac{n-3}{n-1} T\right)^2-
s^2\frac{4n}{(n-1)^2}T^2\right]\\
&=&\frac{n-1}{n}\left[D^0+s\frac{n-3}{n-1} T\right]^2-s^2\frac{4}{n-1}T^2
\qquad \qedhere
\eea[*]
\end{proof}
This calculation allows us to prove a crucial integral formula, which will
yield the desired estimate as an easy corollary. The key idea is
that the difference of squares of Dirac operators on the left hand side
can again by expressed as the square of a suitably renormalized
Dirac operator (and a multiple of the endomorphism $T^2$).
We may assume without loss that $n\geq 4$, since the case $n=3$ is not
very interesting. Recall that we  write
$\nabla^c$ for the connection with parameter $s=1/4$.
\begin{thm}[Twistorial integral formula]\label{twistorial-int-FL}
Suppose $\nabla^c T=0$. For any spinor field $\psi$,
the Dirac operator $\D$ of the connection with torsion
$\frac{1}{3}T$ satisfies the following integral formula:
\bea[*]
\int_M \langle \D^2\psi,\psi\rangle dM& =&
\frac{n}{n-1}\int_M  \|P^s\psi\|^2 dM+ \frac{n}{4(n-1)}\int_M
\Scal^g\|\psi\|^2dM\\
&& +\frac{n(n-5)}{8(n-3)^2}\|T\|^2\int\|\psi\|^2 dM
+\frac{n(4-n)}{4(n-3)^2}\int_M \langle T^2\psi,\psi\rangle dM.
\eea[*]
Here, the parameter $s$ appearing in the twistor operator $P^s$ has the
value  $\displaystyle s=\frac{n-1}{4(n-3)}$.
\end{thm}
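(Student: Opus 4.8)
The plan is to start from the pointwise identity in Lemma \ref{lem.fund-calc} and integrate it over the compact manifold $M$. Writing the left-hand side as $(D^{s/3})^2 - \frac{1}{n}(D^s)^2$ and recalling that $\D = D^{1/4} = D^{(3/4)/3}$, I would choose $s$ so that $s/3 = 1/4$, i.e.\ the operator $D^{s/3}$ in the lemma becomes exactly $\D$; this forces $s = 3/4$ in that naive reading. However, the theorem wants the twistor operator $P^s$ with the special value $s = \frac{n-1}{4(n-3)}$, so the actual bookkeeping is slightly different: one uses the lemma with a general $s$, then relates $\frac{1}{n}(D^s)^2$ to $\|P^s\psi\|^2$ via part (4) of Theorem \ref{thm.TS-props}, and only afterwards specializes $s$. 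Concretely, Theorem \ref{thm.TS-props}(4) gives $\|P^s\psi\|^2 = \|\nabla^s\psi\|^2 - \frac{1}{n}\|D^s\psi\|^2$, and integrating $\|\nabla^s\psi\|^2$ against the Bochner/Schrödinger--Lichnerowicz formula of Theorem \ref{thm.gSLF} converts $\int\|\nabla^s\psi\|^2$ into $\int\langle\Delta^s\psi,\psi\rangle$, hence into $\int\langle (D^{s/3})^2\psi,\psi\rangle$ minus curvature terms.

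So the key steps, in order, are as follows. First, integrate the identity of Lemma \ref{lem.fund-calc} to obtain
\bdm
\int_M \langle (D^{s/3})^2\psi,\psi\rangle\, dM - \frac{1}{n}\int_M \|D^s\psi\|^2\, dM
= \frac{n-1}{n}\int_M \left\|\left(D^0 + \tfrac{s(n-3)}{n-1}T\right)\psi\right\|^2 dM + \frac{4s^2}{1-n}\int_M \langle T^2\psi,\psi\rangle\, dM .
\edm
Second, on the right-hand side recognize that $D^0 + \frac{s(n-3)}{n-1}T$ is a renormalized Dirac operator $D^{s'/3}$ with $s' = \frac{3s(n-3)}{n-1}$; the point of the special value $s = \frac{n-1}{4(n-3)}$ is precisely that then $s' = 3/4$, so $D^0 + \frac{s(n-3)}{n-1}T = \D$, and $\frac{n-1}{n}\int\|\D\psi\|^2 = \frac{n-1}{n}\int\langle\D^2\psi,\psi\rangle$. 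Third, on the left-hand side use the Schrödinger--Lichnerowicz formula (Theorem \ref{thm.gSLF}($*$)) together with $\nabla^c T = 0$ (so $dT = 0$ and the $s\,dT$ term drops, and $\Delta^s$ is the connection Laplacian of $\nabla^s$) to write $\int\langle (D^{s/3})^2\psi,\psi\rangle = \int\|\nabla^s\psi\|^2 + \frac{1}{4}\int\Scal^g\|\psi\|^2 - 2(s/3)^2\|T\|^2\int\|\psi\|^2$ --- being careful that the parameter in the SL formula is $s/3$, so the coefficient is $-2s^2/9\cdot\|T\|^2$ --- and similarly interpret $\frac{1}{n}\int\|D^s\psi\|^2$ via Theorem \ref{thm.TS-props}(4) applied at parameter $s$, namely $\frac{1}{n}\|D^s\psi\|^2 = \|\nabla^s\psi\|^2 - \|P^s\psi\|^2$. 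Substituting both, the $\|\nabla^s\psi\|^2$ terms cancel and one is left with
\bdm
\frac{n-1}{n}\int_M\langle\D^2\psi,\psi\rangle\, dM = \int_M \|P^s\psi\|^2\, dM + \frac{1}{4}\int_M\Scal^g\|\psi\|^2\, dM + (\text{terms in }\|T\|^2\|\psi\|^2\text{ and }\langle T^2\psi,\psi\rangle).
\edm
Fourth, multiply through by $\frac{n}{n-1}$ and collect the $\|T\|^2$ and $\langle T^2\psi,\psi\rangle$ coefficients, substituting $s = \frac{n-1}{4(n-3)}$, to match the stated constants $\frac{n(n-5)}{8(n-3)^2}$ and $\frac{n(4-n)}{4(n-3)^2}$.

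The main obstacle --- really the only one --- is the bookkeeping of the three distinct places where a connection parameter appears: the $s$ labelling $P^s$ and $\nabla^s$, the parameter $s/3$ entering the torsion of $D^{s/3}$, and the renormalized parameter inside Lemma \ref{lem.fund-calc}. One must consistently track which object the Schrödinger--Lichnerowicz formula and Theorem \ref{thm.TS-props}(4) are being applied to, and verify that the choice $s = \frac{n-1}{4(n-3)}$ is exactly what makes $D^0 + \frac{s(n-3)}{n-1}T$ equal to $\D = D^{1/4}$. After that alignment the rest is assembling curvature contributions: the $\frac{1}{4}\Scal^g$ term comes untouched from Theorem \ref{thm.gSLF}($*$), and the two torsion terms arise from combining $-2(s/3)^2\|T\|^2$ from the SL formula with $\frac{4s^2}{1-n}\langle T^2\psi,\psi\rangle$ from Lemma \ref{lem.fund-calc} and with whatever $T$-dependence is hidden in $\Delta^s$ versus $\Delta^c$; a short computation with $s = \frac{n-1}{4(n-3)}$ then produces the displayed coefficients. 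Finally one notes that since $M$ is compact the integration by parts producing $\int\|\nabla^s\psi\|^2 = \int\langle\Delta^s\psi,\psi\rangle$ is legitimate, which is where compactness is used.
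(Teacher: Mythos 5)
Your overall architecture is the same as the paper's: combine Lemma \ref{lem.fund-calc}, the norm identity (4) of Theorem \ref{thm.TS-props}, and the generalized Schr\"odinger--Lichnerowicz formula, then fix $s$ by requiring $\frac{s(n-3)}{n-1}=\frac14$ so that the renormalized operator is $\D$. That part is right, and the order in which you assemble the pieces is immaterial. But there is a genuine error in how you treat the torsion terms, and with your inputs the stated coefficients do not come out.

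First, $\nabla^cT=0$ does \emph{not} imply $dT=0$; it implies $dT=2\sigma_T$, and since $T^2=-2\sigma_T+\|T\|^2$ always holds, the $dT$ term in the Schr\"odinger--Lichnerowicz formula acts on spinors as $s\,dT=s\|T\|^2-sT^2$. This term is essential: the $-sT^2$ piece combines with the $+\frac{4s^2}{n-1}T^2$ coming from Lemma \ref{lem.fund-calc} to give (after multiplying by $\frac{n}{n-1}$ and inserting $s=\frac{n-1}{4(n-3)}$) exactly $\frac{n(4-n)}{4(n-3)^2}\langle T^2\psi,\psi\rangle$, and the $+s\|T\|^2$ piece combines with $-2s^2\|T\|^2$ to give $\frac{n(n-5)}{8(n-3)^2}\|T\|^2$. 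Dropping $dT$ loses both contributions. (A concrete counterexample to $dT=0$: for a Sasaki structure $T=\eta\wedge d\eta$, so $dT=d\eta\wedge d\eta\neq 0$.) Second, you misread the $\|T\|^2$ coefficient in Theorem \ref{thm.gSLF}: the formula is deliberately stated with $D^{s/3}$ on the left and $\Delta^s$, $-2s^2\|T\|^2$ on the right --- the index mismatch is intentional --- so the coefficient is $-2s^2$, not $-2s^2/9$. Your closing remark that the remaining discrepancy is ``hidden in $\Delta^s$ versus $\Delta^c$'' is a red herring: $\Delta^s$ is eliminated cleanly by Theorem \ref{thm.TS-props}(4), and the only missing ingredient is $dT=2\sigma_T$. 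Once you restore $s\,dT=s\|T\|^2-sT^2$ and the correct $-2s^2\|T\|^2$, the computation closes exactly as in the paper.
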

\begin{proof}
Consider the operator $D^{s/3}=D^g+sT$. Integrating over the generalized
Schr\"odinger-Lichnerowicz formula ($*$) of Theorem \ref{thm.gSLF},
we obtain (we omit the volume form and the domain
of integration in most integrals)
\bdm
\int\langle (D^{s/3})^2\psi,\psi\rangle \ =\
\int \|\nabla^s\psi\|^2 + s\int \langle dT\psi,\psi\rangle +\frac{1}{4}
\int \Scal^g\|\psi\|^2 -2s^2\int \|T\|^2\|\psi\|^2.
\edm
By identity (4) from Theorem \ref{thm.TS-props}, the length
$\|\nabla^s\psi\|^2$ can be expressed through the twistor and
the Dirac operator, yielding
\bdm
 \int \langle [(D^{s/3})^2-\frac{1}{n}(D^s)^2] \psi,\psi\rangle
=\int\|P^s\psi\|^2 + s \int\langle dT\psi,\psi\rangle
+\frac{1}{4}\int \Scal^g\, \|\psi\|^2
-2s^2 \int \|T\|^2\|\psi\|^2 .
\edm
The main idea is now to view the difference on the left hand side
as the square of a single Dirac operator by a clever choice of the
parameter $s$. We rewrite the left hand side using the fundamental
calculation from Lemma \ref{lem.fund-calc},
\bea[*]
\frac{n-1}{n}\int \langle  (D^0+s\frac{n-3}{n-1}T)^2 \psi,\psi\rangle & =&
\int\|P^s\psi\|^2+\frac{1}{4}\int \Scal^g \|\psi\|^2+\\
&& +s\int\langle dT\psi,\psi\rangle -2s^2 \int \|T\|^2\|\psi\|^2
+s^2\frac{4}{n-1}\int \langle T^2\psi,\psi\rangle .
\eea[*]
We now choose the parameter $s$ such that the operator on the left hand
side becomes just $\D$, i.\,e.~the Dirac operator with
torsion $\frac{1}{3}T$. Since $\D=D^0+\frac{1}{4}T$,
this requires  $s\frac{n-3}{n-1}=\frac{1}{4}$, hence we obtain
$s=\frac{n-1}{4(n-3)}$, the value encountered in the statement of
the result. Inserting this value of $s$ yields
\bea[*]
 \frac{n-1}{n}\int \langle  \D^2 \psi,\psi\rangle
&=& \int\|P^s\psi\|^2+\frac{1}{4}\int \Scal^g \|\psi\|^2+\\
&&+\frac{n-1}{4(n-3)}\cdot\int\langle dT\cdot\psi,\psi\rangle
-\frac{(n-1)^2}{8(n-3)^2} \int \|T\|^2\|\psi\|^2+\frac{n-1}{4(n-3)^2}
\int \langle T^2\psi,\psi\rangle.
\eea[*]
The assumption $\nabla^c T=0$ implies $dT=2\sigma_T$,
and since
$T^2=-2\sigma_T+\|T\|^2$ always holds, we get  $dT=-T^2+\|T\|^2$.
This means for the previous equation
\bdm
\int \langle  \D^2 \psi,\psi\rangle  =
\frac{n}{n-1}\int\|P^s\psi\|^2+\frac{n}{4(n-1)}\int \Scal^g\|\psi\|^2
+\frac{n(n-5)}{8(n-3)^2}\int \|T\|^2\|\psi\|^2
-\frac{n(n-4)}{4(n-3)^2} \int \langle T^2\psi,\psi\rangle .
\qedhere
\edm
\end{proof}
It is to be understood that all eigenvalue estimates based on this
integral identity are meant on \emph{compact} manifolds, even if this
is not repeated throughout. We can assume that an eigenspinor
 $\psi$ of  $\D^2$ with eigenvalue
$\lambda$ lies in one subbundle $\Sigma_\mu$ (see the general comments
on the universal estimate). Thus, we obtain:
\begin{cor}[Twistorial eigenvalue estimate in $\Sigma_\mu$]
\label{cor.twistorial-est-mu}
For $\nabla^c T=0$, the smallest eigenvalue $\lambda$ of $\D^2$
on $\Sigma_\mu$ satisfies the inequality
\bdm
\lambda\big(\D^2\big|_{\Sigma_\mu}\big) \ \geq\
\frac{n}{4(n-1)}\Scal^g_{\min}+ \frac{n(n-5)}{8(n-3)^2}\|T\|^2
+\frac{n(4-n)}{4(n-3)^2}\mu^2\ =:\  \beta_{\tw} (\mu),
\edm
and equality holds if and only if the following conditions are
satisfied:
\begin{enumerate}
\item the Riemannian scalar curvature of $(M,g)$ is constant,
\item the eigenspinor $\psi$  is a twistor spinor for
 $\displaystyle s=\frac{n-1}{4(n-3)}$.
\end{enumerate}
\end{cor}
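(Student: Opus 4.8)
The plan is to read off the estimate directly from the Twistorial Integral Formula (Theorem \ref{twistorial-int-FL}), which already contains all the hard work. First I would take $\psi$ to be an eigenspinor of $\D^2$ with eigenvalue $\lambda=\lambda(\D^2|_{\Sigma_\mu})$ the smallest one; as noted in the discussion of the universal estimate, since $\nabla^c T=0$ the operators $\D^2$ and $T$ commute and preserve the splitting $\Sigma M=\bigoplus_\mu\Sigma_\mu$, so we may assume $\psi$ lies in a single eigenbundle $\Sigma_\mu$. Then $\D^2\psi=\lambda\psi$ and $T^2\psi=-\mu^2\psi$ (recall $T$ acts with purely imaginary eigenvalues on the spin bundle, so its $\Sigma_\mu$-eigenvalue is $i\mu$ and $T^2=-\mu^2$ there). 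Substituting into the integral formula, the left-hand side becomes $\lambda\int_M\|\psi\|^2\,dM$, and the last term becomes $-\frac{n(4-n)}{4(n-3)^2}\mu^2\int_M\|\psi\|^2\,dM=\frac{n(4-n)}{4(n-3)^2}(-\mu^2)\int\|\psi\|^2$; being careful with this sign is the one spot where a slip is easy.

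Next I would estimate the remaining terms. The term $\frac{n}{n-1}\int_M\|P^s\psi\|^2\,dM$ is nonnegative and is dropped, which is exactly where the inequality (as opposed to equality) enters. In the scalar-curvature term $\frac{n}{4(n-1)}\int_M\Scal^g\|\psi\|^2\,dM$ one bounds $\Scal^g\geq\Scalgmin$ pointwise; since $\frac{n}{4(n-1)}>0$ for $n\geq4$ this gives $\geq\frac{n}{4(n-1)}\Scalgmin\int\|\psi\|^2$. The $\|T\|^2$ term needs no estimation if $\|T\|^2$ is constant; in general, for the inequality one should note $\|T\|^2$ is constant whenever $\nabla^c T=0$ (parallel torsion has constant norm), so $\frac{n(n-5)}{8(n-3)^2}\|T\|^2$ pulls straight out of the integral as a constant. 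Dividing through by $\int_M\|\psi\|^2\,dM>0$ yields precisely
\[
\lambda\ \geq\ \frac{n}{4(n-1)}\Scalgmin+\frac{n(n-5)}{8(n-3)^2}\|T\|^2+\frac{n(4-n)}{4(n-3)^2}\mu^2\ =\ \beta_{\tw}(\mu).
\]

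For the equality discussion I would trace back which inequalities were used: we discarded $\frac{n}{n-1}\int\|P^s\psi\|^2$ and we replaced $\Scal^g$ by $\Scalgmin$. Hence equality forces $\int_M\|P^s\psi\|^2\,dM=0$, i.e.\ $P^s\psi=0$ pointwise (so $\psi$ is a twistor spinor for the operator $P^s$ with $s=\tfrac{n-1}{4(n-3)}$), and it forces $\Scal^g\equiv\Scalgmin$ on the support of $\|\psi\|^2$. A twistor spinor has no zeros on a connected manifold away from a discrete set (the standard argument: $X\cdot\nabla^s_X\psi$ is independent of the unit vector $X$ by Theorem \ref{thm.TS-props}(2), giving control of $\nabla^s\|\psi\|^2$ and ruling out interior zeros of $\|\psi\|^2$ unless $\psi\equiv0$), so $\Scal^g$ is constant on all of $M$. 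Conversely, if $(1)$ and $(2)$ hold, plugging $P^s\psi=0$ and $\Scal^g=\Scalgmin$ back into the integral formula makes the estimate an equality, and then $\lambda=\beta_{\tw}(\mu)$. I expect the only real subtlety to be the sign bookkeeping around $T^2\psi=-\mu^2\psi$ and the coefficient $n(4-n)$ versus $n(n-4)$ in passing from the integral identity to the estimate; everything else is a direct specialization of Theorem \ref{twistorial-int-FL}.
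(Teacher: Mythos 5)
Your overall strategy is exactly the paper's: the corollary is read off from Theorem \ref{twistorial-int-FL} by taking an eigenspinor in a single bundle $\Sigma_\mu$, discarding the nonnegative twistor term, and bounding $\Scal^g$ by its minimum; the equality discussion and the remark that $\|T\|^2$ is constant for parallel torsion are also fine. However, there is a concrete error in the one step you yourself flagged as delicate. In this paper the Clifford action of the $3$-form $T$ on spinors is \emph{self-adjoint} (Lemma \ref{lem.compilation}\,(3): $\langle T\psi,\vphi\rangle=\langle\psi,T\vphi\rangle$), so $T$ has \emph{real} eigenvalues; the bundles $\Sigma_\mu$ are defined by $T\psi=\mu\psi$ with $\mu\in\R$ (see the Sasaki example, $\Sigma_{\pm4}=\{\psi:\,T\psi=\pm4\psi\}$, and the relation $\D\psi=-\tfrac{\mu}{2}\psi$ for parallel spinors). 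Hence $T^2\psi=+\mu^2\psi$, not $-\mu^2\psi$. Your assertion that $T$ acts with purely imaginary eigenvalues (true for vectors and $2$-forms under these conventions, but not for $3$-forms) makes your derivation internally inconsistent: substituting $T^2\psi=-\mu^2\psi$ into the last term $\frac{n(4-n)}{4(n-3)^2}\int\langle T^2\psi,\psi\rangle$ of the integral formula would give $-\frac{n(4-n)}{4(n-3)^2}\mu^2\int\|\psi\|^2$, i.e.\ the \emph{opposite} sign to the term $+\frac{n(4-n)}{4(n-3)^2}\mu^2$ appearing in $\beta_{\tw}(\mu)$, so the inequality you then write down does not follow from your substitution. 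With the correct convention $T^2\psi=\mu^2\psi$ the term comes out right and the rest of your argument, including the characterization of equality via $P^s\psi=0$ and constancy of $\Scal^g$, is sound.
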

\begin{cor}[Twistorial eigenvalue estimate]\label{cor.twistorial-est}
For $\nabla^c T=0$, the smallest eigenvalue $\lambda$ of $\D^2$
satisfies the inequality
\bdm
\lambda \ \geq\ \frac{n}{4(n-1)}\Scal^g_{\min}+ \frac{n(n-5)}{8(n-3)^2}\|T\|^2
+\frac{n(4-n)}{4(n-3)^2}\max(\mu_1^2,\ldots,\mu_k^2)\ =:\  \beta_{\tw},
\edm
and equality holds if and only if, in additon to the two conditions
from the previous Corollary,  the eigenspinor  $\psi$ lies in the subbundle
$\Sigma_\mu$ corresponding to the largest eigenvalue of $T^2$.
\end{cor}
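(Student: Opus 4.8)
The plan is to derive Corollary \ref{cor.twistorial-est} from Corollary \ref{cor.twistorial-est-mu} by the same reduction already used for the universal estimate in Theorem \ref{thm.universal-est}. First I would recall that, because $\nabla^c T = 0$ and we take the parameter $s = 1/4$ in the relevant places, the operator $\D^2 = (D^{s/3})^2$ commutes with $T$ (this was spelled out just after Theorem \ref{thm.gSLF}), so the spin bundle splits $\Sigma M = \bigoplus_\mu \Sigma_\mu$ into the eigenbundles of $T$ with eigenvalues $\mu_1,\dots,\mu_k$, and both $\nabla^s$ and $\D^2$ preserve this splitting. Consequently an eigenspinor $\psi$ of $\D^2$ for the smallest eigenvalue $\lambda$ may be assumed to lie in a single $\Sigma_\mu$; applying Corollary \ref{cor.twistorial-est-mu} in that subbundle gives
\[
\lambda \ \geq\ \frac{n}{4(n-1)}\Scal^g_{\min} + \frac{n(n-5)}{8(n-3)^2}\|T\|^2 + \frac{n(4-n)}{4(n-3)^2}\mu^2 \ =\ \beta_{\tw}(\mu).
\]

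The next step is to turn the $\mu$-dependent bound into a uniform one. Here I would observe that the coefficient $\tfrac{n(4-n)}{4(n-3)^2}$ in front of $\mu^2$ is \emph{negative} for $n \geq 5$ (and zero for $n = 4$), so $\beta_{\tw}(\mu)$ is a decreasing function of $\mu^2$; therefore the weakest — hence universally valid — lower bound is obtained by replacing $\mu^2$ with $\max(\mu_1^2,\dots,\mu_k^2)$, yielding $\lambda \geq \beta_{\tw}$. (The case $n = 4$ is trivial since the $\mu$-term vanishes, and $n = 3$ was excluded; $n \geq 4$ is in force by the running assumption in the section.)

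Finally, for the equality discussion: equality $\lambda = \beta_{\tw}$ forces equality in the intermediate step $\beta_{\tw}(\mu) \geq \beta_{\tw}$, i.e.\ $\mu^2 = \max(\mu_1^2,\dots,\mu_k^2)$, so $\psi$ lies in the subbundle associated to the largest eigenvalue of $T^2$; and it also forces equality in Corollary \ref{cor.twistorial-est-mu}, which gives back conditions (1) and (2) there — constancy of $\Scal^g$ and $\psi$ being a twistor spinor for $s = \tfrac{n-1}{4(n-3)}$. Conversely, if those three conditions hold then running the argument backwards (constant scalar curvature lets us drop the ``$\min$'', the twistor-spinor condition makes the $\|P^s\psi\|^2$ term vanish in the integral formula of Theorem \ref{twistorial-int-FL}, and membership in the top $T^2$-eigenbundle matches the $\mu^2$-term) produces equality. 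I do not anticipate any genuine obstacle; the only point requiring a moment's care is the sign of the $\mu^2$-coefficient, which is exactly what makes ``$\max$'' rather than ``$\min$'' the right substitution — the opposite of what one might naively expect, and the same phenomenon already seen in Theorem \ref{thm.universal-est}.
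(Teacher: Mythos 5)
Your proposal is correct and follows exactly the route the paper takes (implicitly): the commutation of $\D^2$ with $T$ for parallel torsion lets one place the eigenspinor in a single $\Sigma_\mu$, Corollary \ref{cor.twistorial-est-mu} then gives $\lambda\geq\beta_{\tw}(\mu)$, and the non-positivity of the coefficient $\tfrac{n(4-n)}{4(n-3)^2}$ for $n\geq 4$ yields the uniform bound with $\max(\mu_1^2,\dots,\mu_k^2)$, the equality case splitting accordingly into the two conditions of the previous corollary plus membership in the top $T^2$-eigenbundle. No gaps.
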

We make some first pertinent comments on this result.
\begin{enumerate}
\item The twistorial eigenvalue estimate reduces for  $T=0$
to Friedrich's inequality, thus showing its optimality at least in this
situation. The quality of the estimate increases if the scalar curvature
becomes large and dominates the terms in  $\|T\|^2$ and 
$\max(\mu_1^2,\ldots,\mu_k^2)$. But there also exist
a few examples with
negative scalar curvature (for example, Sasaki metrics on
compact quotients of Heisenberg groups, see \cite{Friedrich&I1},
\cite{Friedrich&I2}, \cite{Agricola&F04b}); for these manifolds,
the twistorial eigenvalue estimate becomes rather bad.
\item
In some situations, the geometric data yield additional information
in which
subbundle $\Sigma_\mu$ the smallest eigenvalue can occur, or which bundles are
of particular interest.
In these cases, the global estimate from Corollary \ref{cor.twistorial-est}
is too coarse, and one should apply instead the estimate in one or several
well-chosen subbundles  as stated in Corollary \ref{cor.twistorial-est-mu}.
The case of parallel spinors discussed in Section \ref{subsection.par-spin}
below is an example of such a situation.
\item Strictly speaking, there is not one equality case, but one in
every subbundle $\Sigma_\mu$. Thus, it may happen that the
twistorial eigenvalue estimate on the whole spin bundle $\Sigma$ is not
sharp (for the $\mu$ belonging to
the maximum of the $T$ eigenvalues), but twistor spinors exist
nevertheless (namely, for some other $\mu$).
\item Unfortunately, one cannot construct a $\D$ eigenspinor from a
 $\D^2$ eigenspinor that would still lie in one of the
subbundles $\Sigma_\mu$ -- hence, a twistor spinor realizing the optimal
eigenvalue estimate does not have to be some kind of Killing spinor.
Nevertheless, a very reasonable Killing equation with torsion exists, and
every Killing spinor with torsion is necessarily a twistor spinor
with torsion. In dimension $6$, the converse can be shown
(see Section \ref{section.n6}).
\end{enumerate}
In the next Section, we will discuss applications of this estimate in
different
special geometries with torsion and compare it to the universal
estimate. Section  \ref{sec.KTS} will be devoted to the discussion of the
equality case in the twistorial estimate, in particular to the
description of Killing and twistor spinors with torsion and
examples of manifolds where such spinors exist.

\section{Discussion of the twistorial estimate}\label{sec.discussion}
%
\subsection*{The case $n=4$}
The $4$-dimensional case is special in many respects.
For purely algebraic reasons, $\sigma_T=0$, hence $\nabla^c T=0$ implies
$dT=0$ and $T^2$ acts by scalar multiplication with $\|T\|^2$, i.\,e.
~the only $T$ eigenvalues are $\pm \|T\|$.
Furthermore, $\nabla^c T=0$ implies $\nabla^g *T=0$, i.\,e.~there
exists a LC-parallel $1$-form on $(M^4,g)$. Set $c=\Scalgmin/\|T\|^2$.
In \cite{Agricola&F&K08} it was proved that
\bdm
\lambda \ \geq\ \left\{\ba{ll} \frac{\|T\|^2}{4}\left[c-\frac{1}{2}\right]
 & \text{ for } c\geq 3/2,\\
\frac{\|T\|^2}{16} [\sqrt{6c} - 1]^2 & \text{ for } 1/6\leq c\leq 3/2
\ea\right.
\edm
The first estimate is just the universal estimate given in Theorem
\ref{thm.universal-est}. Indeed, the deformation method used in this paper
has the typical property of yielding eigenvalue estimates that are valid
only for some restricted parameter range. In particular, no improvement
was possible for $c\geq 3/2$. In contrast, the twistorial eigenvalue
estimate from Corollary \ref{cor.twistorial-est} yields
\bdm
\lambda \ \geq \ \frac{\|T\|^2}{3}\left[c-\frac{3}{2}\right] \ \ \text{ for }
c\geq 3/2.
\edm
Hence, the parameter range for which the  twistor ansatz yields an
improvement  is complementary
to the results obtained via deformation techniques.
One checks that the twistor estimate lies above the universal estimate
for $c\geq 9/2$.

We observe that for $n=4$, eigenvalue estimates in terms of the infimum of 
the conformal scalar curvature are available without the assumption
of parallel torsion, see \cite{Alexandrov&I00},
\cite{Dalakov&I01}. Unfortunately, these different curvature quantities cannot
easily be compared (see the more detailed comments on this point in 
\cite{Agricola&F&K08}).
\subsection*{The case $n=5$}
For a $5$-dimensional manifold, the twistorial eigenvalue estimate
becomes
\bdm
\lambda\ \geq\ \frac{5}{16}[\Scalg_{\min}  -\max (\mu^2_1,\ldots, \mu_k^2)].
\edm
Thus, the quality of the estimate increases for large scalar curvatures.
In Example \ref{exa.Stiefel}, we show that the Stiefel manifold
$V_{4,2}=\SO(4)/\SO(2)$ carries a metric for which this estimate becomes
optimal. On the other hand, we can identify manifolds for which the
twistorial estimate yields no improvement. This is for example the
case for Sasaki manifolds $(M^5,g,\xi,\eta,\varphi)$: In this case, 
there exists a unique connection $\nabla$
with totally
skew-symmetric torsion preserving the Sasakian structure by
\cite{Friedrich&I1}. The torsion form is given by the formula
$T = \eta \wedge d \eta$, and $||T||=8$ holds. $T$ splits the 
spinor bundle into two
$1$-dimensional bundles and one $2$-dimensional bundle,
\bdm
\Sigma_{\pm4} \ = \ \big\{ \psi \in \Sigma M^5 \, : \, T\psi \, = \, \pm 
\, 4 \, \psi \big\} \, , \quad
\Sigma_{0} \ = \ \big\{ \psi \in \Sigma M^5 \, : \, T\psi \, = \, 0 \big\} 
\, .
\edm
Thus, $\max (\mu^2_1,\ldots, \mu_k^2)=16$ and the twistorial estimate
becomes 
\bdm
\lambda\ \geq\ \frac{5}{16}\Scalg_{\min} - 5.
\edm
On the other hand, it was proved by the deformation ansatz in
\cite{Agricola&F&K08}
that
\bdm
\lambda \ \geq\ \left\{\ba{ll} 
\frac{1}{16}\left[1+\frac{1}{4}\Scalg_{\min}\right]^2 & \text{ for } 
-4<\Scalg_{\min}\leq 4(9+4\sqrt{5})\\[2mm]
\frac{5}{16}\, \Scalg_{\min} & \text{ for }\ \Scalg_{\min}\geq 4(9+4\sqrt{5})
\simeq 71,78.
\ea\ \right.
\edm
The estimates coincide for $\Scalg_{\min}=36$; for all other possible
scalar curvatures, the deformation estimate is better. Thus, the main
advantage of the twistorial estimates lies here in its universality:
It makes a statement for non-Sasaki manifolds as well, a case that is
not covered by \cite{Agricola&F&K08}.
\subsection*{The case $n=6$}
Nearly K\"ahler manifolds will be discussed in Section \ref{section.n6}.
Hence, let us consider some of the other classes of manifolds with parallel
characteristic torsion. 
 Almost Hermitian $6$-manifolds with parallel characteristic
torsion were classified by Schoemann in \cite{Schoemann}; in particular, it
was shown that there exist many almost Hermitian manifolds of Gray-Hervella type
$W_3$ or $W_4$ with  parallel characteristic torsion -- nilpotent Lie groups,
naturally reductive spaces, $S^1$-fibrations over Sasaki $5$-manifolds etc.
For both classes, the torsion has eigenvalues $\mu=0,\pm \sqrt{2}\,\|T\|$,
thus the twistorial eigenvalue estimate for $W_3$ or $W_4$ geometries with
parallel torsion is given by
\bdm
\lambda\ \geq\ \frac{3}{10}\Scalg_{\min}  -\frac{7}{16}\|T\|^2.
\edm
The right hand side is non-negative for $\Scalg_{\min}\geq 35\|T\|^2/18$.
For this curvature range, the deformation technique did not yield
any improvement of the universal eigenvalue estimate \cite{Kassuba10}. 
However, it was proved
therein that there exist no $\nabla^c$-parallel spinors, hence 
the universal estimate
could not be optimal; thus, the twistor estimate is better for these
large scalar curvatures.
\subsection*{Existence of $\nabla^c$-parallel spinors}\label{subsection.par-spin}
In general, the twistor and the universal estimate cannot be compared
abstractly. But
if there exists a $\nabla^c$-parallel spinor field $\psi\in\Sigma_\mu$,
the universal eigenvalue estimate $\lambda$
(see Theorem \ref{thm.universal-est}) is sharp for some $T$ eigenvalue $\mu$,
i.\,e.
\bdm
\lambda\ =\ \frac{1}{4}\Scalgmin +\frac{1}{8}\|T\|^2 -\frac{1}{4}\mu^2\
= :\ \beta_\univ (\mu).
\edm
Notice that $\Scalg$ has to be constant in this situation: By  identity (1)
from Theorem \ref{thm.Fr&I} \cite[Cor.~3.2]{Friedrich&I1} for  parallel torsion, 
such a spinor satisfies $\sigma_T\psi+ \Scal^c \psi / 4 = 0$, so the fact that
$\nabla^c\sigma_T=0$ implies $\Scal^c=\mathrm{const}$, and then
the claim follows (this generalizes the well-known fact that the Riemannian
scalar curvature
vanishes in the presence of a $\nabla^g$-parallel spinor, see
\cite{Hitchin74}).  Hence we can drop
the minimum in the formula for $\lambda$.
On the other hand, $\nabla^c\psi=0$ implies
$\D\psi =-\frac{1}{2}T\psi=-\frac{\mu}{2}\psi$, hence $\lambda=\mu^2/4$.
Thus, we have the relation
\bdm\tag{$*$}
\Scalg\ =\  -\frac{1}{2}\|T\|^2+2\mu^2.
\edm
A priori, it is not so easy to compare this result with the twistorial
eigenvalue estimate $\lambda\geq \beta_\tw(\mu)$. However, 
in the presence of  $\nabla^c$-parallel spinor fields, the twistorial 
estimate cannot be larger than the universal estimate, 
i.\,e.~$\beta_\tw(\mu)\leq \beta_\univ (\mu)$
needs to hold. This observation leads to the following result, which
is of interest on his own:
\begin{lem}
Suppose that $\nabla^c T=0$,  that there exists at least one
$\nabla^c$-parallel spinor field $0\neq \psi\in\Sigma_\mu$, and that $n\leq 8$.
Then the following inequalities hold:
\bdm\tag{$**$}
0\ \leq\ 2n\|T\|^2 + (n-9)\mu^2, \quad
\Scalg \ \leq\ \frac{9(n-1)}{2(9-n)}\|T\|^2.
\edm
Furthermore, equality is attained if and only if
$\beta_\tw(\mu)= \beta_\univ (\mu)$.
\end{lem}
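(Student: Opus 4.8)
The plan is to combine the two eigenvalue estimates at hand --- the universal estimate $\beta_{\univ}(\mu)$ from Theorem \ref{thm.universal-est} and the twistorial estimate $\beta_{\tw}(\mu)$ from Corollary \ref{cor.twistorial-est-mu} --- via the simple observation already noted in the text: whenever a $\nabla^c$-parallel spinor $0\neq\psi\in\Sigma_\mu$ exists, the universal estimate is \emph{attained}, so $\lambda=\beta_{\univ}(\mu)$, and therefore the valid inequality $\lambda\geq\beta_{\tw}(\mu)$ forces $\beta_{\tw}(\mu)\leq\beta_{\univ}(\mu)$. First I would write both right-hand sides out explicitly. On $\Sigma_\mu$ the spinor is a $T^2$-eigenspinor with eigenvalue $\mu^2$, so the term $\int\langle T^2\psi,\psi\rangle$ in the twistorial estimate becomes $\mu^2\|\psi\|^2$, giving
\bdm
\beta_{\tw}(\mu)\ =\ \frac{n}{4(n-1)}\Scalgmin + \frac{n(n-5)}{8(n-3)^2}\|T\|^2 + \frac{n(4-n)}{4(n-3)^2}\mu^2,
\edm
while $\beta_{\univ}(\mu)=\frac14\Scalgmin + \frac18\|T\|^2 - \frac14\mu^2$.

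Next I would form the difference $\beta_{\univ}(\mu)-\beta_{\tw}(\mu)\geq 0$ and simplify. The key point is that the $\Scalgmin$ terms do \emph{not} cancel, but they can be eliminated using the constraint $(*)$ derived just above the Lemma for a $\nabla^c$-parallel spinor, namely $\Scalg = -\tfrac12\|T\|^2 + 2\mu^2$. Substituting this value of $\Scalgmin$ into $\beta_{\univ}(\mu)-\beta_{\tw}(\mu)\geq 0$ turns the inequality into a relation involving only $\|T\|^2$ and $\mu^2$. After clearing the common positive factor (something of the shape $\tfrac{c\,n}{(n-1)(n-3)^2}$ with $c>0$) and collecting terms, this should reduce precisely to
\bdm
0\ \leq\ 2n\|T\|^2 + (n-9)\mu^2,
\edm
which is the first inequality in $(**)$. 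For the second inequality I would feed this back: from $(*)$ we have $\mu^2 = \tfrac12\Scalg + \tfrac14\|T\|^2$, and since we assume $n\leq 8$ the coefficient $(n-9)$ is negative, so $2n\|T\|^2 + (n-9)\mu^2\geq 0$ becomes an \emph{upper} bound on $\mu^2$, equivalently on $\Scalg$; plugging $\mu^2 = \tfrac12\Scalg+\tfrac14\|T\|^2$ in and solving the resulting linear inequality for $\Scalg$ yields exactly $\Scalg\leq\frac{9(n-1)}{2(9-n)}\|T\|^2$ (here $9-n>0$ is what lets us divide without reversing the inequality). The equality statement is then immediate, since every implication used --- $\lambda=\beta_{\univ}(\mu)$, the substitution of $(*)$, and the two algebraic rearrangements --- is an equivalence once $\psi$ is a fixed $\nabla^c$-parallel spinor in $\Sigma_\mu$; thus equality in either inequality of $(**)$ holds if and only if $\beta_{\tw}(\mu)=\beta_{\univ}(\mu)$.

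The only genuine obstacle is arithmetic bookkeeping: one must be careful that the sign of the overall factor pulled out of $\beta_{\univ}(\mu)-\beta_{\tw}(\mu)$ is positive for all $n$ in the relevant range (in particular $n\geq 4$, so $n-3>0$ and $n-1>0$), so that dividing through preserves the direction of the inequality, and likewise that $9-n>0$ under the hypothesis $n\leq 8$ is what legitimizes the final division. No analytic input beyond the already-cited facts is needed --- the whole argument is the comparison $\beta_{\tw}(\mu)\leq\beta_{\univ}(\mu)$ together with the curvature identity $(*)$, processed by elementary algebra.
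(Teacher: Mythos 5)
Your proposal is correct and follows essentially the same route as the paper: both rest on the observation that a $\nabla^c$-parallel spinor forces $\beta_{\tw}(\mu)\leq\beta_{\univ}(\mu)$, use the identity $(*)$ to eliminate $\Scalg$ and obtain the first inequality, and then feed the resulting bound on $\mu^2$ back into $(*)$ (using $9-n>0$) to get the second. The algebra checks out, and your equality discussion matches the paper's claim.
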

\begin{proof}
We only sketch the argument, leaving out the routine computations.
First, one checks that $\beta_\tw(\mu)\leq \beta_\univ (\mu)$ is equivalent
to
\bdm
\Scalg \ \leq\ \frac{(n-1)(9-n)}{2(n-3)^2}\|T\|^2 +
\frac{(n-1)(2n-9)}{(n-3)^2}\mu^2.
\edm
Rewriting relation ($*$) as $-\Scalg =\|T\|^2/2-2\mu^2$ and adding this to the
previous inequality, one obtains the first of the two statements.
It becomes trivial for $n\geq 9$. For $n\leq 8$, it can be rewritten
as $\mu^2\leq 2n\|T\|^2/(9-n)$. We then use this in order to eliminate
$\mu^2$ from the identity ($*$), yielding the second statement.
\end{proof}
We thus have an easy criterion for excluding the existence of
parallel spinors and for checking whether the two eigenvalue estimates
yield the same result.

For example, consider a $6$-dimensional nearly K\"ahler manifold
$(M^6,g,J)$ with its characteristic connection $\nabla^c$. These are
Einstein spaces of positive scalar curvature,
$\|T\|^2 = \frac{2}{15}\Scalg$, and $T$ has the eigenvalues
$\mu=0$ (multiplicity $6$) and $\mu=\pm 2\|T\|$ (each with multiplicity $1$).
It is well-known that the two Riemannian
Killing spinors $\vphi_{\pm}$ are $\nabla^c$-parallel and lie
in $\Sigma_{\pm 2\|T\|}$.
One then checks by hand that
 $\beta_\tw(\mu)= \beta_\univ (\mu)=\frac{2}{15}\Scalg$, and indeed
one sees that the relations $(**)$ hold with an equality sign.
Qualitatively, the same happens for nearly parallel $G_2$ manifolds.
%
%
\section{Killing and twistor spinors with torsion}\label{sec.KTS}

\begin{lem}\label{lem.twistorglg}
Suppose $\nabla^c T=0$.
The twistor equation $P^s\psi =0$ corresponding to the
parameter value $\displaystyle s=\frac{n-1}{4(n-3)}$
is equivalent to
\bdm
\nabla^c_X\psi +\frac{1}{n}X\cdot \D\psi + \frac{1}{2(n-3)}
(X\wedge T) \cdot\psi \ =\ 0,
\edm
and each such twistor spinor satisfies
\bdm
\D^2\psi\ =\ \left[\frac{n}{4(n-1)}\Scal^g+\frac{n(n-5)}{8(n-3)^2}
\|T\|^2 + \frac{n(4-n)}{4(n-3)^2}T^2\right]\psi.
\edm
\end{lem}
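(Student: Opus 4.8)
The plan is to dispatch the two assertions separately. The first, the reformulation of the twistor equation $P^s\psi=0$, is purely algebraic and uses no curvature hypothesis; the second rests on the parallelism $\nabla^c T=0$ via the simplified Schr\"odinger--Lichnerowicz formula. For the field equation I would start from the bare twistor equation $\nabla^s_X\psi+\tfrac1n X\cdot D^s\psi=0$, valid for all $X$, and translate both terms into $\nabla^c$ and $\D$. Since $\nabla^s_X\psi=\nabla^g_X\psi+s\,(X\haken T)\cdot\psi$ and $\nabla^c=\nabla^{1/4}$, one has $\nabla^s_X\psi=\nabla^c_X\psi+(s-\tfrac14)(X\haken T)\cdot\psi$, and for $s=\tfrac{n-1}{4(n-3)}$ the coefficient collapses to $s-\tfrac14=\tfrac{1}{2(n-3)}$. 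Likewise the Dirac operator of $\nabla^s$ is $D^s=D^g+3s\,T$ -- this is just the Clifford identity $\sum_k e_k\cdot(e_k\haken T)=3\,T$ for a $3$-form -- so $D^s=\D+(3s-\tfrac14)T=\D+\tfrac{n}{2(n-3)}T$ for that $s$. Substituting both into the twistor equation and grouping the torsion terms leaves $\nabla^c_X\psi+\tfrac1n X\cdot\D\psi+\tfrac{1}{2(n-3)}\big[(X\haken T)+X\cdot T\big]\cdot\psi=0$, and the bracket equals $(X\wedge T)$ by the Clifford relation $X\cdot\omega=X\wedge\omega-X\haken\omega$. Every step is reversible, so this is precisely the claimed characterization.

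For the action of $\D^2$ I would invoke Lemma \ref{lem.fund-calc} with the same parameter. For $s=\tfrac{n-1}{4(n-3)}$ one has $\tfrac{s(n-3)}{n-1}=\tfrac14$, so $D^0+\tfrac{s(n-3)}{n-1}T=\D$ and the lemma becomes the operator identity $(D^{s/3})^2-\tfrac1n(D^s)^2=\tfrac{n-1}{n}\D^2-\tfrac{4s^2}{n-1}T^2$. Evaluate the left-hand side on the twistor spinor $\psi$: by Theorem \ref{thm.TS-props}(3), $(D^s)^2\psi=n\,\Delta^s\psi$; and since $\nabla^c T=0$ forces $dT=2\sigma_T$, the second form of the generalized Schr\"odinger--Lichnerowicz formula (Theorem \ref{thm.gSLF}) gives $(D^{s/3})^2\psi=\Delta^s\psi-s\,T^2\psi+\tfrac14\Scal^g\psi+(s-2s^2)\|T\|^2\psi$. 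The $\Delta^s$ contributions cancel, leaving $\tfrac{n-1}{n}\D^2\psi=\tfrac14\Scal^g\psi+\big(\tfrac{4s^2}{n-1}-s\big)T^2\psi+(s-2s^2)\|T\|^2\psi$. Plugging in $s=\tfrac{n-1}{4(n-3)}$, simplifying $\tfrac{4s^2}{n-1}-s=\tfrac{(n-1)(4-n)}{4(n-3)^2}$ and $s-2s^2=\tfrac{(n-1)(n-5)}{8(n-3)^2}$, and multiplying through by $\tfrac{n}{n-1}$ yields exactly the asserted formula for $\D^2\psi$.

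Neither part is genuinely difficult; the only real care needed is the consistent bookkeeping of the several Dirac operators ($\D=D^g+\tfrac14 T$, $D^{s/3}=D^g+sT$, $D^s=D^g+3sT$) and connections ($\nabla^c=\nabla^{1/4}$, $\nabla^s$), together with the rational-coefficient arithmetic once $s$ is specialized. One point worth stating explicitly is that the Schr\"odinger--Lichnerowicz identity is used here in its pointwise operator form, so the computation genuinely produces the eigen-equation $\D^2\psi=[\,\cdot\,]\psi$ and not merely the integrated relation of Theorem \ref{twistorial-int-FL}; consequently this is a strengthening of the equality discussion in Corollary \ref{cor.twistorial-est-mu}, not a restatement of it.
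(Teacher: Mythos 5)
Your proposal is correct and follows essentially the same route as the paper: the same substitutions $\nabla^s_X=\nabla^c_X+\tfrac{1}{2(n-3)}(X\haken T)\cdot$, $D^s=\D+\tfrac{n}{2(n-3)}T$ and the Clifford identity $X\cdot T=X\wedge T-X\haken T$ for the first claim, and the combination of Lemma \ref{lem.fund-calc}, Theorem \ref{thm.TS-props}(3) and the Schr\"odinger--Lichnerowicz formula for the second. The only cosmetic difference is that you use the already-simplified form of the SL identity while the paper substitutes $dT=-T^2+\|T\|^2$ at the end; the coefficient arithmetic checks out in both cases.
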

\begin{proof}
For  $s=\frac{n-1}{4(n-3)}$, one has
\bdm
\nabla^s_X\psi=\nabla^c_X\psi+\frac{1}{2(n-3)}(X\haken T)\cdot \psi
\edm
and
\bdm
D^s=\D+\frac{n}{2(n-3)}T\,.
\edm
Inserting these expressions into the twistor equation leads to
\bdm
\nabla^c_X\psi+\frac{1}{n}X\cdot \D\psi +\frac{1}{2(n-3)}(X\cdot T +
X\haken T)\cdot \psi=0 .
\edm
The claim then follows, since
$X\cdot T=X\wedge T -X\haken T$.
To derive the identity for $\D^2$ on twistor spinors, we proceed
similarly as in the proof
of the twistorial integral formula (Theorem \ref{twistorial-int-FL}), but with
one crucial change. Let $\psi$ be a twistor spinor,  $s=\frac{n-1}{4(n-3)}$.
We start with the generalized
Schr\"odinger-Lichnerowicz formula ($*$) of Theorem \ref{thm.gSLF},
\bdm
 (D^{s/3})^2\psi \ =\
\Delta^s\psi  + s  dT\psi  +\frac{1}{4}\Scal^g\psi -2s^2 \|T\|^2\psi.
\edm
Instead of the  norm
identity  (4) for any spinor from Theorem \ref{thm.TS-props}, we can now use
the operator identity (3), $(D^s)^2\psi=n\Delta^s\psi$ to rewrite this
as
\bdm
 (D^{s/3})^2\psi - \frac{1}{n}(D^s)^2\psi \ =\
s \, dT\psi  +\frac{1}{4}\Scal^g\psi -2s^2 \|T\|^2\psi.
\edm
By the fundamental Lemma \ref{lem.fund-calc}, the left hand side can
be expressed through $\D^2$,
\bdm
\frac{n-1}{n}\D^2\psi - s^2\frac{4}{n-1}T^2\psi \ =\
s\,  dT\psi  +\frac{1}{4}\Scal^g\psi -2s^2 \|T\|^2\psi.
\edm
Now one finishes the calculation as in the proof of Theorem
\ref{twistorial-int-FL},
and it comes as no surprise that the identity obtained is exactly
the operator version of the twistorial integral formula without
$\int \|P^s\psi\|^2\, dM$ term.
\end{proof}
Observe that being  \emph{only} a twistor spinor does not imply
being a $\D^2$ eigenspinor, hence we cannot conclude from the last identity
that the scalar curvature has to be constant if such a spinor exists.
We shall now define the \emph{Killing equation with torsion}
for any dimension $n$.
\begin{dfn}[Killing spinor with torsion]\label{Killing-spinor-wt}
A spinor field $\psi$ is called a \emph{Killing spinor with torsion}
if  the equation $\nabla^s_X\psi = \kappa X\cdot \psi$
for $s=\frac{n-1}{4(n-3)}$ holds. The spinor then satisfies 
$D^s\psi = -n\kappa\psi$, which is equivalent to
\bdm
\D \psi\ =\ -n\kappa\psi - \frac{n}{2(n-3)}T\psi.
\edm
\end{dfn}
Thus, contrary to a twistor spinor with torsion, \emph{any}
Killing spinor with torsion known to lie in some $T$-eigenspace
$\Sigma_\mu$ is a $\D$-eigenspinor.

Observe that the Killing vector fields of a metric connection with
antisymmetric torsion are precisely those of the Levi-Civita connection,
hence the vector field
\be\label{Killing-VF}
X_\psi \ :=\ \sum_{j=1}^n i\langle \psi, e_j\cdot\psi \rangle e_j
\ee
is Killing as in the Riemannian situation (compare \cite[p.~30]{BFGK}).
The next result claims that all Killing spinors are twistor
spinors with matching parameters. We omit the easy proof.
\begin{lem}\label{gen-Killing-eq}
Suppose $\nabla^c T=0$ and $\psi\in\Sigma_\mu$.
The Killing equation $\nabla^s_X\psi = \kappa X\cdot \psi$
for $s=\frac{n-1}{4(n-3)}$ is then equivalent to
\bdm
\nabla^c_X\psi -\left[\kappa +\frac{\mu}{2(n-3)}\right]X\cdot \psi
+\frac{1}{2(n-3)}(X\wedge T)\psi \ =\ 0.
\edm
In particular, $\psi$ is a twistor spinor with torsion
for the same value $s$,  and the
Killing number $\kappa$ satisfies the quadratic equation
\bdm
n \left[\kappa +\frac{\mu}{2(n-3)}\right]^2 \ =\
\frac{1}{4(n-1)}\Scal^g+\frac{n-5}{8(n-3)^2}
\|T\|^2 - \frac{n-4}{4(n-3)^2}\mu^2.
\edm
In particular, the scalar curvature has to be constant.
\end{lem}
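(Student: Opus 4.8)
The plan is to start from the Killing equation $\nabla^s_X\psi = \kappa X\cdot\psi$ with $s = \tfrac{n-1}{4(n-3)}$ and rewrite it in terms of the characteristic connection $\nabla^c$. Using the same identities as in the proof of Lemma \ref{lem.twistorglg}, namely $\nabla^s_X\psi = \nabla^c_X\psi + \tfrac{1}{2(n-3)}(X\haken T)\cdot\psi$, the Killing equation becomes $\nabla^c_X\psi - \kappa X\cdot\psi + \tfrac{1}{2(n-3)}(X\haken T)\cdot\psi = 0$. Then, since $\psi\in\Sigma_\mu$ we have $T\psi = \mu\psi$, so there should be a relation converting $(X\haken T)\cdot\psi$ into a combination of $(X\wedge T)\cdot\psi$ and $\mu X\cdot\psi$ (using $X\cdot T = X\wedge T - X\haken T$ together with $X\cdot T\cdot\psi + T\cdot X\cdot\psi = $ a multiple of $\langle X, -\rangle$; more directly, one writes $(X\wedge T)\cdot\psi = X\cdot T\cdot\psi + (X\haken T)\cdot\psi = \mu X\cdot\psi - 2\,\text{[correction]}$). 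Carrying this out carefully yields the stated equivalent form $\nabla^c_X\psi - [\kappa + \tfrac{\mu}{2(n-3)}]X\cdot\psi + \tfrac{1}{2(n-3)}(X\wedge T)\cdot\psi = 0$.

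Comparing this to the twistor equation in the form given in Lemma \ref{lem.twistorglg}, $\nabla^c_X\psi + \tfrac{1}{n}X\cdot\D\psi + \tfrac{1}{2(n-3)}(X\wedge T)\cdot\psi = 0$, one sees immediately that a Killing spinor with torsion is a twistor spinor with torsion for the same $s$, provided $\D\psi = -n[\kappa + \tfrac{\mu}{2(n-3)}]\psi$; but this is exactly the relation recorded in Definition \ref{Killing-spinor-wt} (using $T\psi = \mu\psi$), so the identification is automatic. This proves the first claim.

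For the quadratic equation satisfied by $\kappa$, the natural route is to apply the $\D^2$-identity from Lemma \ref{lem.twistorglg} — which holds for any twistor spinor with torsion — to $\psi$. On $\Sigma_\mu$ the endomorphism $T^2$ acts as $\mu^2$, so that identity reads $\D^2\psi = [\tfrac{n}{4(n-1)}\Scal^g + \tfrac{n(n-5)}{8(n-3)^2}\|T\|^2 + \tfrac{n(4-n)}{4(n-3)^2}\mu^2]\psi$. On the other hand, from $\D\psi = -n[\kappa + \tfrac{\mu}{2(n-3)}]\psi$ we get $\D^2\psi = n^2[\kappa + \tfrac{\mu}{2(n-3)}]^2\psi$. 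Equating the two scalar factors and dividing by $n$ gives precisely the asserted quadratic equation $n[\kappa + \tfrac{\mu}{2(n-3)}]^2 = \tfrac{1}{4(n-1)}\Scal^g + \tfrac{n-5}{8(n-3)^2}\|T\|^2 - \tfrac{n-4}{4(n-3)^2}\mu^2$. Finally, since $\psi$ is a fixed eigenspinor (nowhere zero on a dense set, or one may work with $\|\psi\|^2$ which is constant for a Killing spinor), the left-hand side is constant, $\|T\|^2$ and $\mu^2$ are constant, hence $\Scal^g$ is forced to be constant.

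The only genuinely delicate point is the Clifford-algebra bookkeeping in the very first step: getting the sign and the coefficient right in the passage from $(X\haken T)\cdot\psi$ to $(X\wedge T)\cdot\psi - \mu X\cdot\psi$ (equivalently, handling $X\cdot T\cdot\psi$ for $\psi\in\Sigma_\mu$), since $X\cdot T$ and $T\cdot X$ differ and one must be careful that $T$ acts as the scalar $\mu$ only from the correct side. Everything after that is a direct substitution into the two identities already proved in Lemmas \ref{lem.fund-calc} and \ref{lem.twistorglg}, so no further obstacle is expected.
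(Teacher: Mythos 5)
Your argument is correct and is exactly the computation the authors had in mind when they wrote ``we omit the easy proof'': rewrite $\nabla^s$ in terms of $\nabla^c$ via $\nabla^s_X\psi=\nabla^c_X\psi+\tfrac{1}{2(n-3)}(X\haken T)\psi$, trade $(X\haken T)\psi$ for $(X\wedge T)\psi-\mu X\cdot\psi$ using $X\cdot T=X\wedge T-X\haken T$ and $T\psi=\mu\psi$, match against Lemma \ref{lem.twistorglg} via the eigenvalue relation $\D\psi=-n\bigl[\kappa+\tfrac{\mu}{2(n-3)}\bigr]\psi$ from Definition \ref{Killing-spinor-wt}, and square. The only cosmetic blemish is the garbled ``$-2[\text{correction}]$'' in your sketch of the Clifford bookkeeping, but the identity you actually need, $(X\haken T)\psi=(X\wedge T)\psi-\mu X\cdot\psi$, is the one you state and it is right.
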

\begin{NB}
If $T=0$, and a fortiori $\mu=0$, this quadratic equation reduces
to the well-known relation $\Scal^g = 4n(n-1)\kappa^2$ for Riemannian Killing
spinors \cite{Friedrich80}. However,  $\kappa=0$ does \emph{not} correspond to
$\nabla^c$-parallel spinors because of the torsion shift hidden in the value
of $s$, hence $0$ is a priori an admissible Killing number (contrary to the
Riemannian case). An 
easy formal calculation shows: \emph{Any spinor field  parallel for
the connection with torsion $T$ is a Killing spinor with torsion with $\kappa
=0$ for the connection with torsion $\frac{n-3}{n-1}\,T$}. This can be a
useful remark when parallel spinors are known to exist for a connection
with non-parallel torsion for which the rescaled torsion $\frac{n-3}{n-1}\,T$
becomes parallel. 
\end{NB}
The Killing equation with torsion can be used to express the
curvature operator of a manifold admitting such spinor fields.
Thus, we obtain an algebraic identity for the Ricci tensor;
the rather lengthy proof is deferred to Appendix \ref{app.int-cond}.
\begin{thm}[integrability condition]
%
Let $\psi$ be a Killing spinor with torsion
with Killing number $\kappa$,   set $\lambda:=\frac{1}{2(n-3)}$
for convenience, and recall that $s=\frac{n-1}{4(n-3)}$.
Then the Ricci curvature of the characteristic connection satisfies
the identity
\bea[*]
{\Ric}^c(X)\psi&=&-16s\kappa(X\haken
T)\psi+4(n-1)\kappa^2X\psi+(1-12\lambda^2)(X\haken\sigma_T)\psi+\\
                        & &+2(2\lambda^2+\lambda)\sum e_k(T(X,e_k)\haken T)\psi \,.
\eea[*]
\end{thm}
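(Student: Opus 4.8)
The plan is to differentiate the Killing equation and contract, exactly as one does for Riemannian Killing spinors, but keeping careful track of the extra torsion terms. Write the Killing equation in the form supplied by Lemma \ref{gen-Killing-eq},
\[
\nabla^c_X\psi \ =\ a\, X\cdot\psi - \lambda\,(X\wedge T)\cdot\psi,
\]
where $a:=\kappa+\lambda\mu$ and $\lambda=\tfrac{1}{2(n-3)}$, so that $s=(n-1)\lambda/2$. The first step is to compute $\nabla^c_Y\nabla^c_X\psi$ from this identity, using $\nabla^c T=0$ to pull $\nabla^c_Y$ through the torsion terms, i.e.
\[
\nabla^c_Y\nabla^c_X\psi \ =\ a\,(\nabla^c_Y X)\cdot\psi + a\,X\cdot\nabla^c_Y\psi
 - \lambda\,((\nabla^c_Y X)\wedge T)\cdot\psi - \lambda\,(X\wedge T)\cdot\nabla^c_Y\psi,
\]
and then substitute the Killing equation again for each remaining $\nabla^c_Y\psi$. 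Antisymmetrizing in $X,Y$ and adding the $\nabla^c_{[X,Y]}$ term produces the curvature endomorphism $\kr^c(X,Y)\psi$ as an explicit algebraic expression in $X,Y,T,\sigma_T$ and products of Clifford elements, with coefficients polynomial in $a$ and $\lambda$; this is where the quadratic nature of $\kappa$ in the final formula originates.

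The second step is the standard contraction: multiply $\kr^c(X,Y)\psi$ by $e_Y$ (Clifford) and sum over $Y$, invoking the first Bianchi-type identity for $\nabla^c$ that relates $\sum_Y e_Y\cdot\kr^c(X,Y)$ to $\Ric^c(X)$ (here one must use the version with torsion, which introduces a correction involving $dT$ and hence, under $\nabla^c T=0$, the forms $\sigma_T$ and $T^2$ — the identities for which are collected in Appendix \ref{formulas}). On the right-hand side one is left to simplify sums of the shape $\sum_k e_k\cdot\bigl((e_k\wedge T)\cdot(X\wedge T)\bigr)\cdot$, $\sum_k e_k\cdot X\cdot(e_k\wedge T)\cdot$, etc. These are reduced by repeated use of the Clifford relations $X\cdot T=X\wedge T-X\haken T$, $\sum_k e_k\cdot e_k=-n$, and the contraction formulas $\sum_k e_k\haken(e_k\haken T)=0$, $\sum_k (e_k\haken T)\cdot(e_k\haken T) = 2\sigma_T = \|T\|^2 - T^2$, together with the identity $\sum_k e_k\cdot(T(X,e_k)\haken T)$ that already appears in the statement. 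Collecting terms by Clifford type yields a scalar multiple of $X\cdot\psi$ (the $4(n-1)\kappa^2$ term, after re-expanding $a=\kappa+\lambda\mu$ and absorbing the $\mu$-contributions via the quadratic relation of Lemma \ref{gen-Killing-eq}), a multiple of $(X\haken T)\cdot\psi$ (the $-16s\kappa$ term), a multiple of $(X\haken\sigma_T)\cdot\psi$, and the residual $\sum_k e_k(T(X,e_k)\haken T)\cdot\psi$ term with coefficient $2(2\lambda^2+\lambda)$.

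The main obstacle is purely computational bookkeeping: the anticommutators of Clifford products of a $1$-form with a $3$-form do not close on low-degree forms, so one must carefully separate the degree-$0$, degree-$2$, and degree-$4$ (and degree-$6$) components at each stage and verify that all the higher-degree pieces either cancel in the antisymmetrization or are reabsorbed using $\nabla^c T=0$ (which forces $dT=2\sigma_T$, hence $dT=\|T\|^2-T^2$ as used already in Theorem \ref{twistorial-int-FL}). A secondary subtlety is that the Killing number $\mu$ enters both through $a=\kappa+\lambda\mu$ and through $T\psi=\mu\psi$ on $\Sigma_\mu$, so some terms nominally proportional to $(X\haken T)\psi$ collapse to multiples of $X\psi$; one has to be disciplined about \emph{not} using $T\psi=\mu\psi$ prematurely, applying it only at the very end, precisely so that the final identity is stated operator-theoretically in terms of $\sigma_T$ and $\sum_k e_k(T(X,e_k)\haken T)$ rather than scalars. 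Once the Clifford algebra is organized this way, matching coefficients against the four displayed terms is routine, and the constants $1-12\lambda^2$ and $2(2\lambda^2+\lambda)$ emerge from the $\lambda^2$ and $\lambda$ contributions of the double torsion terms.
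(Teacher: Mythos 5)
Your overall skeleton is the right one and is essentially the paper's: differentiate the Killing equation, antisymmetrize to get the curvature acting on $\psi$, contract with $\sum_k e_k\cdot$, and convert the contracted curvature into $\Ric^c(X)$ via the torsion-corrected identity $\Ric^c(X)\psi=-2\sum_k e_k\mathcal{R}^c(X,e_k)\psi+\tfrac12 (X\haken dT)\psi$ of Theorem \ref{thm.Ric-id}, together with $dT=2\sigma_T$. The paper, however, organizes the computation so as to avoid exactly the places where your plan runs into trouble: it keeps the Killing equation in the form $\nabla^s_X\psi=\kappa X\cdot\psi$, so that $\mathcal{R}^s(X,Y)\psi=4s\kappa\,T(X,Y)\psi+\kappa^2(YX-XY)\psi$ is immediate and $\mu$ never enters, and it then converts $\mathcal{R}^s$ into $\mathcal{R}^c$ by a purely algebraic identity (valid for any spinor, using only $\nabla^cT=0$) whose correction terms are the commutator of the \emph{two-forms} $X\haken T$ and $Y\haken T$ plus $\lambda(T(X,Y)\haken T)$.

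Two concrete problems with your version. First, by starting from the $\Sigma_\mu$-form $\nabla^c_X\psi=(\kappa+\lambda\mu)X\psi-\lambda(X\wedge T)\psi$ you introduce a spurious $\mu$-dependence, and your proposed mechanism for removing it --- ``absorbing the $\mu$-contributions via the quadratic relation of Lemma \ref{gen-Killing-eq}'' --- cannot work: that relation expresses $n(\kappa+\lambda\mu)^2$ through $\Scal^g$, $\|T\|^2$ and $\mu^2$, so invoking it would inject a scalar-curvature term into the Ricci identity, which contains none. The $\mu$'s disappear only because the two forms of the Killing equation are the same operator equation; one should work with the manifestly $\mu$-free form $\nabla^c_X\psi=\kappa X\psi-\lambda(X\haken T)\psi$ and refrain from using $T\psi=\mu\psi$ at the start just as much as at the end. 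Second, the wedge form forces you to expand Clifford products of four-forms such as $(X\wedge T)(Y\wedge T)$, with components in degrees up to eight, whereas the contraction form only needs the commutator of two two-forms; and your quoted identity $\sum_k(e_k\haken T)(e_k\haken T)=2\sigma_T$ is off --- Lemma \ref{lem.compilation}(5) gives $2\sigma_T-3\|T\|^2$. As written the proposal is a plan rather than a proof: the coefficients $1-12\lambda^2$ and $2(2\lambda^2+\lambda)$ are asserted rather than derived, and with the incorrect $\mu$-elimination step and the four-form bookkeeping the computation would not close in the form you describe.
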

As a typical application of this result, it is shown in Corollary
\ref{cor.noKSonES}:
\begin{cor}
A $5$-dimensional Einstein-Sasaki manifold $(M,g,\xi,\eta,\vphi)$
endowed with its characteristic
connection cannot admit Killing spinors with torsion.
\end{cor}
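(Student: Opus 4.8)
The plan is to argue by contradiction: assume a $5$-dimensional Einstein--Sasaki manifold $(M^5,g,\xi,\eta,\vphi)$ carries a Killing spinor with torsion $\psi\in\Sigma_\mu$ with Killing number $\kappa$, and derive an inconsistency from the integrability condition of the preceding Theorem. First I would recall the explicit data for the $5$-dimensional Sasakian case already collected in Section \ref{sec.discussion}: the characteristic torsion is $T=\eta\wedge d\eta$, one has $\|T\|^2=8$, the $T$-eigenvalues on $\Sigma M^5$ are $\mu\in\{0,\pm 4\}$, and the Einstein condition pins down $\Scal^g$ (for the standard normalisation of an Einstein--Sasaki $5$-manifold, $\Ric^g=4g$, so $\Scal^g=20$; more generally $\Scal^g$ is a known constant). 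Since the manifold is Einstein, $\Ric^c$ can also be computed explicitly — for a Sasakian manifold with characteristic connection, $\Ric^c$ is a fixed combination of $g$ and $\eta\otimes\eta$, so $\Ric^c(X)$ acting by Clifford multiplication is an explicit endomorphism of $\Sigma M^5$.

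Next I would insert these explicit values into the integrability identity
\bdm
\Ric^c(X)\psi=-16s\kappa(X\haken T)\psi+4(n-1)\kappa^2 X\psi+(1-12\lambda^2)(X\haken\sigma_T)\psi+2(2\lambda^2+\lambda)\sum_k e_k(T(X,e_k)\haken T)\psi,
\edm
with $n=5$, hence $s=\tfrac{1}{2}$ and $\lambda=\tfrac14$. The terms $X\haken\sigma_T$ and $\sum_k e_k(T(X,e_k)\haken T)$ are computed purely from $T=\eta\wedge d\eta$ using the contraction formulas of the appendix; on a Sasakian $5$-manifold these are again explicit multiples of $X$, $X\haken T$, and $\eta(X)\xi$-type terms acting on $\psi$. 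The strategy is then to decompose the resulting identity according to the $T$-eigenspace decomposition $\Sigma M^5=\Sigma_{+4}\oplus\Sigma_{-4}\oplus\Sigma_0$ (recalling $\Sigma_{\pm4}$ are line bundles, $\Sigma_0$ is a rank-$2$ bundle), and to test it against suitable choices of $X$ — in particular $X=\xi$ (the Reeb field, satisfying $\xi\haken T=0$ and $\xi\haken d\eta=0$) versus $X$ orthogonal to $\xi$. The case $X=\xi$ should immediately kill the torsion-dependent terms and force a relation of the shape $\Ric^c(\xi)\psi=4(n-1)\kappa^2\,\xi\cdot\psi$, which together with the known value of $\Ric^c(\xi)$ gives one algebraic constraint on $\kappa$; testing against $X\perp\xi$ gives a second, genuinely different constraint (now the $\sigma_T$ and double-contraction terms contribute). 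Combining these two constraints — and also using the quadratic equation for $\kappa$ from Lemma \ref{gen-Killing-eq}, which is a third relation among $\kappa$, $\mu$, $\Scal^g$, $\|T\|^2$ — one gets an overdetermined system; the claim is that for each admissible $\mu\in\{0,\pm4\}$ this system has no real solution $\kappa$, so no such spinor exists.

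The main obstacle I anticipate is bookkeeping rather than conceptual: correctly evaluating the Clifford-algebra endomorphisms $(X\haken\sigma_T)\cdot$, $(X\haken T)\cdot$, and $\sum_k e_k\cdot(T(X,e_k)\haken T)\cdot$ on each eigenspace $\Sigma_\mu$, since $T$, $\sigma_T$ and $T^2$ do not act diagonally on all of $\Sigma M^5$ in a way that is independent of the contraction with $X$, and one must be careful about how multiplication by $X$ (which shifts between eigenspaces) interacts with these operators. A clean way to organise this is to use $\sigma_T=-\tfrac12 T^2+\tfrac12\|T\|^2$ on the symmetric part and to exploit that on a Sasakian $5$-manifold the relevant forms are all built from $\eta$ and $d\eta=2(\omega$-type $2$-form$)$, so that every operator in sight is a polynomial in the two commuting elements $\eta\cdot\xi\cdot$ and the horizontal Kähler form acting on spinors; diagonalising these simultaneously reduces the whole identity to scalar equations on the three summands. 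Once that reduction is done, checking the non-solvability of the resulting small system is elementary. I would defer all of this routine-but-intricate computation to Appendix \ref{app.int-cond}, stating in the main text only the contradiction and the conclusion.
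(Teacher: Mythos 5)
Your overall strategy is the paper's: fix the explicit Sasakian frame with $T=\eta\wedge d\eta$, $\|T\|^2=8$, $\mu\in\{0,\pm4\}$, $\Scal^g=20$, compute $\Ric^c$ from the Einstein condition, feed everything into the integrability identity of Theorem \ref{thm.int-cond-KS} with $s=1/2$, $\lambda=1/4$, and cross it with the quadratic equation for $\kappa$ from Lemma \ref{gen-Killing-eq}. That is exactly how the paper proceeds: it first solves the quadratic to obtain the finite list of admissible $\kappa$ for each $\mu$, and then shows that for $X=e_1$ (a horizontal direction) the endomorphism $\Ric^c(e_1)-\bigl[-8\kappa(e_1\haken T)+16\kappa^2 e_1+\tfrac14(e_1\haken\sigma_T)+\tfrac34\sum_k e_k(T(e_1,e_k)\haken T)\bigr]$ has nonzero determinant for every admissible $\kappa$, hence no kernel.

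However, your proposed shortcut via $X=\xi$ rests on a false identity. You claim $\xi\haken T=0$; in fact $\xi\haken T=\xi\haken(\eta\wedge d\eta)=(\xi\haken\eta)\,d\eta-\eta\wedge(\xi\haken d\eta)=d\eta\neq0$ (only $\xi\haken d\eta=0$ is true). Likewise $\sum_k e_k(T(\xi,e_k)\haken T)$ does not vanish. So the $X=\xi$ equation does not collapse to $\Ric^c(\xi)\psi=16\kappa^2\xi\cdot\psi$; had it done so, $\Ric^c(\xi)=0$ would force $\kappa=0$ and trivialize the corollary, which should have been a warning sign, since $\kappa=0$ is not even among the roots of the quadratic from Lemma \ref{gen-Killing-eq} here. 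The genuine content of the proof is therefore the evaluation in a horizontal direction, which you defer as ``elementary'' without carrying it out; as written, your argument is a plan whose one concretely stated simplification fails, rather than a complete proof. Redo the contraction identities for $T=\eta\wedge d\eta$ (note $\sigma_T=4\,e_1\wedge e_2\wedge e_3\wedge e_4$, so $\xi\haken\sigma_T=0$ but $\xi\haken T=d\eta$) and then perform the determinant check at $X=e_1$ for each admissible $\kappa$, as the paper does.
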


\begin{exa}[A $5$-dimensional manifold with Killing spinors with torsion]
\label{exa.Stiefel}
The $5$-dimensional Stiefel manifold $V_{4,2}=\SO(4)/\SO(2)$ carries
a one-parameter family of metrics constructed by G.\ Jensen \cite{Jensen75}
with many remarkable properties. Embed $H=\SO(2)$ into $G=\SO(4)$ as the
lower diagonal $2\x 2$ block. Then the Lie algebra $\so(4)$ splits
into $\so(2)\oplus \m$, where $\m$ is given by
\bdm
\m \ =\left\{ \left[ \ba{c|c} {\ba{cr} 0 & -a \\ a & 0 \ea} & -X^t \\ \hline
X & {\ba{cc} 0 & 0 \\ 0 & 0 \ea} \ea\right] =: (a,X)\, : \
a\in \R,\ X\in \M_{2,2}(\R)\,\right\}\,.
\edm
Denote by $\beta(X,Y):= \tr(X^tY)$ the Killing form of $\so(4)$.
Then the Jensen metric on $\m$ to the parameter $t\in\R$ is  defined
by
\bdm
\lan (a,X), (b,Y)\ran\ =\ \frac{1}{2}\beta(X,Y) + t \beta(a,b)\ =\
\frac{1}{2}\beta(X,Y)+  2t\cdot ab\,.
\edm
For $t=2/3$, G.\ Jensen proved that this metric is Einstein, and Th.\
Friedrich showed that it carries a homogeneous spin structure and that
it admits two Riemannian Killing spinors
\cite{Friedrich80} and thus realizes the equality case in his
estimate for the first eigenvalue of the Dirac operator.
A detailed investigation of this family of metrics from the point of
view of metric connections with torsion may be found in \cite{Agricola03};
in particular, we refer to these two papers for all proofs of formulas
given below (however, we will write down whatever is needed to follow
our argument).
Denote by $E_{ij}$ the standard basis of $\so(4)$. Then the elements
 \bdm
 Z_1\, :=\, E_{13},\ Z_2 \,:=\, E_{14},\ Z_3\,=\,E_{23},\ Z_4\,=\,
 E_{24},\ Z_5\,=\,\frac{1}{\sqrt{2s}}\,E_{12}
 \edm
form an orthonormal basis of $\m$.
Identifying $\m$ with $\R^5$ via the chosen basis, the isotropy
representation of an element $g(\theta)=\left[\ba{cc}\cos\theta& -\sin\theta\\
\sin\theta&\cos\theta \ea\right]\in H=\SO(2)$
and its lift to the $4$-dimensional spinor
representation $\kappa: \Spin(\R^5)\ra \GL(\Delta_5)$ can be computed,
 \bdm
 \Ad g(\theta)\ =\  \left[\ba{ccccc}
 \cos\theta & -\sin\theta & 0 & 0 & 0 \\
 \sin\theta & \cos\theta & 0 & 0 & 0 \\
 0 & 0 & \cos\theta & -\sin\theta & 0 \\
 0 & 0 & \sin\theta & \cos\theta & 0 \\
 0 & 0 & 0 & 0 & 1
 \ea\right],\quad
 \kappa\big(\Adtilde g(\theta)\big)\ =\ \left[\ba{cccc}
 e^{i\theta} & 0 & 0 & 0 \\
 0 & e^{-i\theta} & 0 & 0 \\
 0 & 0& 1 & 0\\ 0 & 0 & 0 & 1 \ea\right]\,.
\edm
Thus, the basis elements $\psi_3$ and $\psi_4$ of $\Delta_5$ define sections of
the spinor bundle $S=G\x_{\kappa(\Adtilde)}\Delta_5$ if viewed as
constant maps $G\ra\Delta_5$. In fact, for $t=2/3$,
$\psi^{\pm}:= \pm  i\psi_3 +\psi_4 $
are exactly the Riemannian Killing spinors from \cite{Friedrich80}.
For the undeformed metric ($t=1/2$), these two spinors are parallel.
In \cite[Prop. 3]{Jensen75}, the author computed the abstract formulas for
the map $\Lambda_{\m}^{g}: \m\cong\R^5\ra\so(5)$
 defining the Levi-Civita connection
in the sense of Wang's Theorem \cite[Ch. X, Thm 2.1]{Kobayashi&N2}.
In our example, this yields (viewed as endomorphisms of $\R^5$):
 \bdm
 \Lambda_{\m}^{g}(Z_1)\,=\, \sqrt{\frac{t}{2}}E_{35},\quad
 \Lambda_{\m}^{g}(Z_2)\,=\, \sqrt{\frac{t}{2}}E_{45},\quad
 \Lambda_{\m}^{g}(Z_3)\,=\, - \sqrt{\frac{t}{2}}E_{15},\quad
 \Lambda_{\m}^{g}(Z_4)\,=\, - \sqrt{\frac{t}{2}}E_{25},
\edm\bdm
 \Lambda_{\m}^{g}(Z_5)\,=\, \frac{1-t}{\sqrt{2t}}(E_{13}+E_{24})\,.
 \edm
The space $\m$ has a preferred direction, namely
$\xi= Z_5$, which is fixed under the isotropy representation.
Denote its dual $1$-form, $\eta(X)=\lan Z_5, X\ran$ by $\eta$.
As discussed in \cite{Agricola03}, there exist three
almost contact metric structures intertwining the
isotropy representation, and their  characteristic connections
with torsion coincide (see \cite{Friedrich&I1} for general results).
To fix the ideas, choose for example the skew-symmetric endomorphism
$\varphi: TM\ra TM$ defined by $\vphi(Z_1)=-Z_3, \ \vphi(Z_2)=-Z_4,
\vphi(Z_5)=0$. The differential of its fundamental form
$F(X,Y):= \lan X,\varphi(Y) \ran$ and  its Nijenhuis tensor vanish.
Thus, the Stiefel manifold $V_{4,2}$ admits
a characteristic connection $\nabla^c$ with torsion
 \bdm
 T\,=\, \eta\hut d\eta \, =\,- \sqrt{2t}\,(Z_1\hut Z_3+Z_2\hut Z_4)\hut Z_5\,.
 \edm
One checks that $T$ is parallel, hence the methods described here apply.
Furthermore, one has the following geometric data,
\bdm
\|T\|^2\ =\ 4t, \quad \mu\in\{0,\pm 2\sqrt{2t}\},\quad
\Scalg\ =\ 8-2t, \quad
 \Ric^g \ =\ \diag(2-t,2-t,2-t,2-t,2t).
\edm
Using the formulas for the Levi-Civita connection, one
checks that $\psi^\pm$ is an $\D$-eigenspinor to the eigenvalue
$\pm 1/\sqrt{2t}$. Since $\psi^\pm$ lies in the bundle $\Sigma_\mu$
with $\mu=\mp 2\sqrt{2t}$,
the universal eigenvalue estimate and the
twistorial eigenvalue estimate take the
numerical values
\bdm
\beta_{\univ}\ =\ 2(1-t),\quad \beta_{\tw}\ =\ \frac{5}{2}-\frac{25}{8}t .
\edm
Thus, the two estimates coincide for $t=4/9$; below this value,
the twistorial estimate is better, while above, the universal estimate
is to be preferred. For large $t$ -- corresponding to highly negative
curvature --  both estimates are not applicable. In the figure below,
these estimates and the known eigenvalue $\lambda(\D^2)=1/2t$ of $\D^2$
are drawn.
\bdm \psfrag{t}{$t$}
\psfrag{EW}{$\lambda(\D^2)$}
\psfrag{buniv}{$\beta_{\univ}$}\psfrag{btw}{$\beta_{\tw}$}
\psfrag{4/9}{$\frac{4}{9}$}
\includegraphics[width=7.5cm]{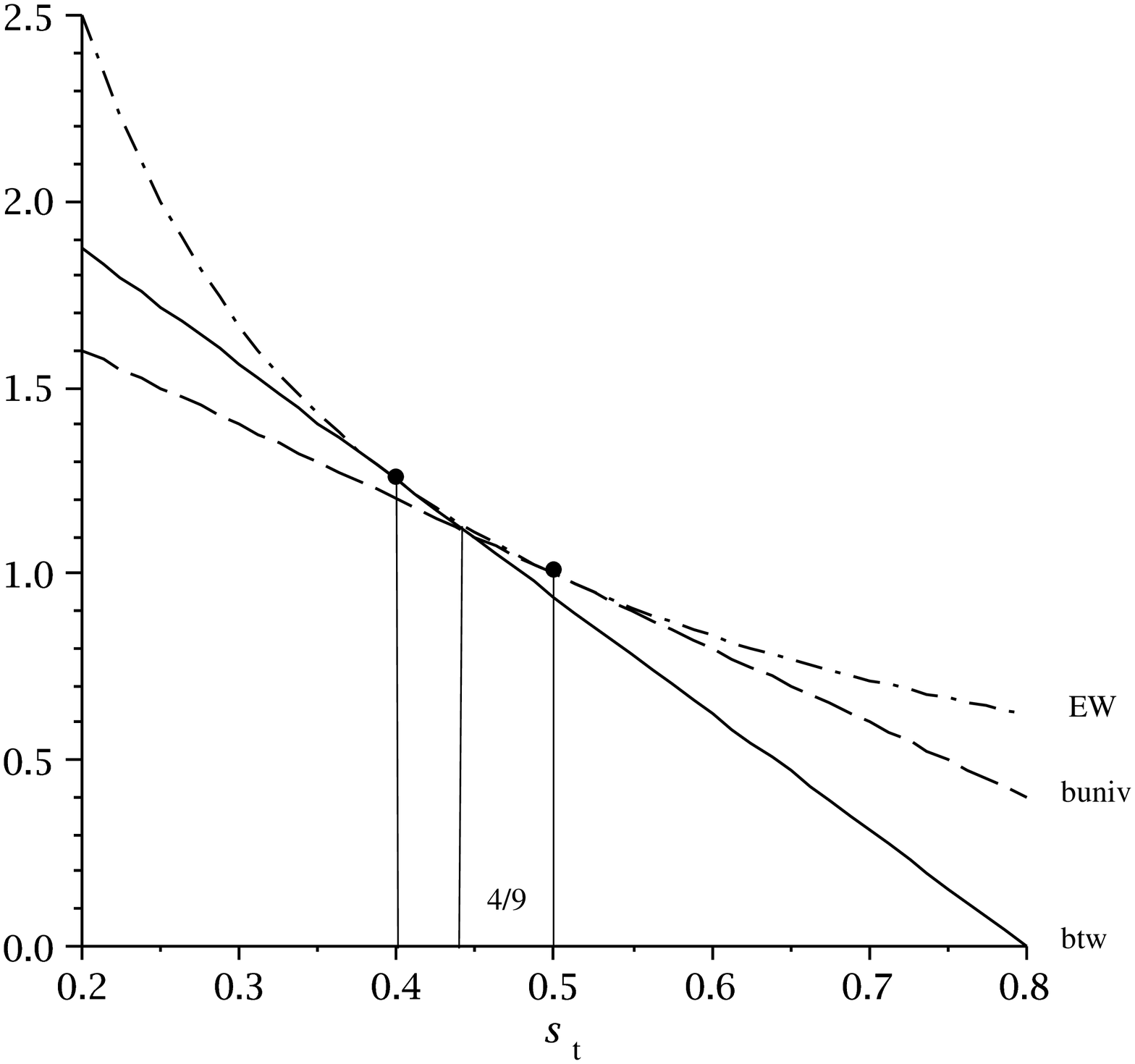}
\edm
 A priori,  $\lambda(\D^2)$ has no reason to be the smallest
eigenvalue. However, in the case $t=1/2$ the spinor fields 
$\psi^\pm$ are $\nabla^c$-parallel, and thus they realize the equality case in 
the universal estimate. For the twistorial estimate,
 one sees that $\beta_{\tw}$ becomes optimal for $t=2/5$, and
thus $\psi^\pm$ are automatically twistor spinors with torsion. A more detailed
computer computation reveals that more is true in this case: 
$\psi^\pm$ is a Killing spinor
with torsion to the Killing number\footnote{The example thus shows that 
the Killing number of a Killing spinor with torsion on a compact 
manifold can be of either sign, in contrast to the Riemannian case.} 
$\kappa =\pm \sqrt{5}/10$.
These are two of the four  solutions of the two quadratic
equations for $\kappa$ from Lemma \ref{gen-Killing-eq} (one equation
for each value of $\mu$), the other two being $\pm 3\sqrt{5}/10$
(for $\mu=\mp 2\sqrt{2t}$). These would yield a larger
$\D$-eigenvalue, thus they cannot correspond to twistor spinors.
One checks that the  Killing vector fields associated to $\psi^\pm$
by equation (\ref{Killing-VF}) are non-vanishing multiples of $\xi=Z_5$.

The example also illustrates the eigenvalue estimate
in single eigensubbundles $\Sigma_\mu$, as described in
Corollary \ref{cor.twistorial-est-mu}. The torsion $T$ has the eigenvalues
$0,\pm 2\sqrt{2t}$, thus  $\max(\mu^2_i)=8t$, and as we saw, the twistor
spinors with torsion lie in the  eigensubbundles corresponding the
the $\mu$ values for which the maximum is attained. On the other side,
Corollary \ref{cor.twistorial-est-mu} allows an eigenvalue estimate
on the remaining bundle with $\mu=0$. Since $n=5$, the twistorial eigenvalue
estimate takes the form
\bdm
\lambda\big(\D^2\big|_{\Sigma_0}\big) \ \geq\ \beta_\tw(0)\ =\
 \frac{5}{16}\Scalg\ =\  \frac{5(4-t)}{8}.
\edm
This is exactly the Riemannian estimate. It is optimal if and only if
there exists a Riemannian Killing spinor in $\Sigma_0$, which is never
the case. We can again compare this estimate with the universal estimate
from Theorem \ref{thm.universal-est},
\bdm
\lambda\big(\D^2\big|_{\Sigma_0}\big) \ \geq\ \beta_\univ(0)\ =\
\frac{1}{4}\Scalg + \frac{1}{8}\|T \|^2 \ =\ 2.
\edm
Hence, the twistorial estimate lies above the universal estimate
for $t\leq 4/5$. Since equality in the universal estimate is obtained
for $\nabla^c$-parallel spinors, it can presumably  not be obtained as well, though
we have no strict argument that excludes the existence of parallel spinors in
$\Sigma_0$.
\end{exa}
\begin{exa}[A $7$-dimensional manifold with Killing spinors with torsion]
\label{exa.Stiefel-2}
We shortly describe a second example, relatively similar to the previous one,
hence we will not give so many details. Consider the $7$-dimensional
Stiefel manifold $V_{5,2}=\SO(5)/\SO(3)$, with $\SO(3)$ embedded as
upper diagonal $(3\x 3)$-block. 
 The complement $\m$ of $\so(3)$ inside
$\so(5)$ splits under the isotropy representation into two copies of
the defining representation and a one-dimensional
trivial summand, $\m=\m_3 \oplus \m_3\oplus \m_1$. We define a new
metric on $V_{5,2}$ by deforming the
Killing form in direction of $\m_1$ by a factor $t>0$,
$g_t := \beta \big|_{\m_3\oplus\m_3} + t\beta\big|_{\m_1}$.
This manifold is known to be Einstein Sasaki and to have two Riemannian 
Killing spinors for $t=3/2$, see \cite{FKMS}, \cite{Kath00}.
$V_{5,2}$ carries an almost metric contact structure in direction  $\m_1=\R\cdot Z_7$
with vanishing Nijenhuis tensor and $\vphi=E_{14}+E_{25}+E_{36}$. It admits a 
characteristic connection $\nabla^c$ with torsion
\bdm 
T\,=\, \eta\hut d\eta \, =\,- \sqrt{t}\,(Z_1\hut Z_4+Z_2\hut Z_5+ Z_3\hut Z_6 )\hut Z_7\,.
\edm
One checks that $\nabla^c T=0$ for all $t$ and that the $T$ eigenvalues are
$3\sqrt{t}$ (multiplicity $2$) and $-\sqrt{t}$ (multiplicity $6$).  The lift of the isotropy representation
to the spin representation has two invariant spinors $\vphi_{\pm}\in 
\Sigma_{3\sqrt{t}}$, which will thus define global spinor fields on $V_{5,2}$.
After computing the formulas for the Levi-Civita connection, one
checks that $\vphi^\pm$ are $\D$-eigenspinors to the eigenvalue
$\pm -3/2\sqrt{t}$. Furthermore,
\bdm
\|T\|^2\ =\ 3t, \quad
\Scalg\ =\ 18-3t/2, \quad
 \Ric^g \ =\ \diag(3-t/2,\ldots, 3-t/2, 3t/2).
\edm
The universal eigenvalue estimate and the
twistorial eigenvalue estimate take the
numerical values
\bdm
\beta_{\univ}\ =\ \frac{9}{4}(2- t),\quad 
\beta_{\tw}\ =\ \frac{21}{4}-\frac{49}{16}t .
\edm
Equality is reached for $t=12/13$, hence the twistorial estimate
is better for metrics with $t<12/13$. Indeed, $\vphi_{\pm}$ are
Killing spinors with torsion for $t=42/49$ with the Killing number
$\kappa=-\sqrt{42}/56$.
\end{exa}
Further examples may be found in the second author's Ph.D.~thesis, see \cite{BB12}.
%
\section{The twistor equation in dimension $6$}\label{section.n6}
%
In dimension $6$, the twistor equation can be further reduced
to a Killing equation, thus leading to considerable simplifications.
For convenience, recall that the twistorial eigenvalue estimate
for $n=6$ amounts to
\bdm
\lambda\ \geq\ \frac{3}{10}\Scalg+\frac{1}{12}\|T\|^2 -\frac{1}{3}\mu^2.
\edm
\begin{lem}\label{lem.kommutator}
Assume $\nabla^c T=0$ und let $\psi$ be a twistor spinor for
$s=\frac{n-1}{4(n-3)}$. Then
 $\D$ and $T$ satisfy the relation
\bdm
\left[\D T+(1-\frac{6}{n})T\D\right]\psi
\ = \ \left[\frac{5-n}{n-3}T^2-\frac{2}{n-3}||T||^2\right]\psi.
\edm
\end{lem}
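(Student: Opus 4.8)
The plan is to transport the desired identity to the Dirac operator $D^s$ attached to the distinguished parameter $s=\frac{n-1}{4(n-3)}$, apply the anticommutator formula of Theorem~\ref{thm.Fr&I}(2), and use the twistor equation to collapse the otherwise uncontrollable operator $\mathcal{D}^s$. First I would reduce the statement to one about $D^s$: since $\D=D^g+\tfrac14 T$ and $D^s=D^g+3sT$, for our value of $s$ one has $D^s=\D+\tfrac{n}{2(n-3)}T$, exactly as recorded in the proof of Lemma~\ref{lem.twistorglg}. Substituting $\D=D^s-\tfrac{n}{2(n-3)}T$ into $\D T+(1-\tfrac6n)T\D$, the two $T^2$-contributions combine with total coefficient $\tfrac{n}{2(n-3)}\big(2-\tfrac6n\big)=\tfrac{2n-6}{2(n-3)}=1$, which yields the purely algebraic operator identity
\[
\D T+\Big(1-\tfrac6n\Big)T\D\;=\;D^sT+\Big(1-\tfrac6n\Big)TD^s-T^2 .
\]
So it suffices to evaluate the right-hand $D^s$-expression on the twistor spinor $\psi$.

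Next I would apply Theorem~\ref{thm.Fr&I}(2), $D^sT+TD^s=dT+\delta T-8s\,\sigma_T-2\mathcal{D}^s$, to $\psi$; the point is to compute $\mathcal{D}^s\psi$. Here the twistor equation enters: it gives $\nabla^s_{e_i}\psi=-\tfrac1n e_i\cdot D^s\psi$, whence
\[
\mathcal{D}^s\psi=\sum_i(e_i\haken T)\cdot\nabla^s_{e_i}\psi=-\tfrac1n\Big(\sum_i(e_i\haken T)\cdot e_i\Big)\cdot D^s\psi=-\tfrac3n\,T D^s\psi ,
\]
using the Clifford identity $\sum_i(e_i\haken T)\cdot e_i=\sum_i e_i\cdot(e_i\haken T)=3T$ (the same relation behind $D^s=D^g+3sT$; the two orderings agree because $(e_i\haken T)\cdot e_i-e_i\cdot(e_i\haken T)$ is proportional to $e_i\haken(e_i\haken T)=0$). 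Thus $-2\mathcal{D}^s\psi=\tfrac6n\,TD^s\psi$, and carrying this term across turns the anticommutator formula, evaluated on $\psi$, into
\[
D^sT\psi+\Big(1-\tfrac6n\Big)TD^s\psi\;=\;dT\psi+\delta T\psi-8s\,\sigma_T\psi .
\]

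Finally I would insert the hypothesis $\nabla^c T=0$ and the explicit value of $s$. Parallel torsion gives $dT=2\sigma_T$ (as already used in the proof of Theorem~\ref{twistorial-int-FL}) and $\delta T=0$ (also a consequence of $\nabla^c T=0$; cf.\ \cite{Friedrich&I1}), so the right-hand side becomes $(2-8s)\,\sigma_T\psi$, and $s=\tfrac{n-1}{4(n-3)}$ gives $2-8s=-\tfrac{4}{n-3}$. Using $T^2=-2\sigma_T+\|T\|^2$, i.e.\ $\sigma_T=\tfrac12(\|T\|^2-T^2)$ on spinors, this reads $D^sT\psi+(1-\tfrac6n)TD^s\psi=\tfrac{2}{n-3}\big(T^2-\|T\|^2\big)\psi$. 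Combining with the operator identity from the first step,
\[
\Big[\D T+\big(1-\tfrac6n\big)T\D\Big]\psi=\Big(\tfrac{2}{n-3}-1\Big)T^2\psi-\tfrac{2}{n-3}\|T\|^2\psi=\tfrac{5-n}{n-3}\,T^2\psi-\tfrac{2}{n-3}\,\|T\|^2\psi ,
\]
which is exactly the asserted identity.

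The only genuinely non-routine step is the collapse of $\mathcal{D}^s$ on a twistor spinor: the whole difficulty is in recognizing that the generically intractable first-order operator $\mathcal{D}^s$ — emphasized as being beyond control right after Theorem~\ref{thm.Fr&I} — reduces on $\psi$ to the scalar multiple $-\tfrac3n\,TD^s\psi$ the instant $P^s\psi=0$ is imposed, by way of $\sum_i(e_i\haken T)\cdot e_i=3T$. Everything else is elementary algebra with the explicit value of $s$ and the two standard identities $dT=2\sigma_T$ and $T^2=-2\sigma_T+\|T\|^2$.
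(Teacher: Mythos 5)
Your proof is correct and follows essentially the same strategy as the paper's: both rest on the anticommutator identity of Theorem~\ref{thm.Fr&I}(2) together with the observation that the twistor equation collapses $\mathcal{D}^s$ into a multiple of $TD^s$. The only (cosmetic) difference is where you evaluate: the paper applies the anticommutator at $s=1/4$, where $dT+\delta T-8s\sigma_T$ vanishes but the twistor equation in its $\nabla^c$-form contributes the $T^2$ and $\|T\|^2$ terms, whereas you apply it at $s=\frac{n-1}{4(n-3)}$, where the native twistor equation gives the clean collapse $\mathcal{D}^s\psi=-\frac{3}{n}TD^s\psi$ and the surviving $(2-8s)\sigma_T$ term supplies the same curvature contributions.
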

\begin{proof}
We start with the anticommutator relation from \cite{Friedrich&I1}, cited in
Theorem \ref{thm.Fr&I}, (2):
\bdm
D^sT+TD^s=dT+\delta T-8s\sigma_T-2\mathcal{D}^s.
\edm
For $\nabla^c$-parallel torsion and $s=1/4$, 
the three first terms on the right hand side vanish, hence
\bdm
D^cT+TD^c=-2\mathcal{D}^c\,.
\edm
Since $D^c=\D+\frac{1}{2}T$, this may be restated as
\bdm
\D T+T\D +T^2=-2\mathcal{D}^c\,.
\edm
The action of  $\mathcal{D}^c$ on a twistor spinor $\psi$
may be computed from the twistor equation
(Lemma \ref{lem.twistorglg}) with
 $s=\frac{n-1}{4(n-3)}$ and $X=e_i$, multiplying by $e_i\haken T$ and 
then summing over $i$,
\bdm
\mathcal{D}^c\psi+\frac{3}{n}T\D\psi+
\frac{1}{n-3}T^2-\frac{1}{n-3}||T||^2\ =\ 0.
\edm
Now one  obtains the desired result by inserting the expression for
$\mathcal{D}^c\psi$ in the previous relation.
\end{proof}
This relation has particularly interesting consequences for $n=6$.
\begin{cor}\label{cor.TSequalsKS}
Let $n=6$ and $\nabla^c T=0$. If $\psi$ is a twistor spinor for
$s=\frac{n-1}{4(n-3)} = \frac{5}{12}$
in the $T$ eigenbundle $\Sigma_\mu$, exactly one of the two
following cases holds:
\begin{enumerate}
\item $\mu=0$: either $T=0$ or $\psi=0$;
\item $\mu\neq 0$:  $\psi$ is a $\D$ eigenspinor
with eigenvalue
\bdm
\D\psi\ =\ - \frac{1}{3}\left[ \mu + 2\frac{\|T\|^2}{\mu}\right]\psi
\edm
and the twistor equation for $\psi$ and $s=5/12$ is equivalent to the
Killing equation
$\nabla^s\psi=\kappa X\cdot \psi$ for the same value of $s$
and  Killing number  $\displaystyle
\kappa= \frac{1}{9}\left[\frac{\|T\|^2}{\mu}-\mu\right]$, thus leading
to the Killing equation in its final form
\bdm
\nabla^c_X\psi - \frac{1}{18} \left[ \mu + 2\frac{\|T\|^2}{\mu}\right]X\cdot
\psi + \frac{1}{6} (X\wedge T)\psi \ =\ 0.
\edm
In particular, the scalar curvature is constant and satisfies the
relation
\bdm
\frac{3}{10}\Scal^g\ =\ \frac{4}{9}\mu^2 +\frac{13}{36}\|T\|^2
+\frac{4}{9}\frac{\|T\|^4}{\mu^2}\ .
\edm
\end{enumerate}
\end{cor}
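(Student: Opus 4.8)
The plan is to specialize Lemma \ref{lem.kommutator} to $n=6$, where the coefficient $1-6/n$ of $T\D$ vanishes, so the relation collapses to a pure statement about $\D T$ acting on a twistor spinor. First I would substitute $n=6$ into
\bdm
\left[\D T+(1-\tfrac{6}{n})T\D\right]\psi
\ = \ \left[\tfrac{5-n}{n-3}T^2-\tfrac{2}{n-3}\|T\|^2\right]\psi
\edm
to obtain $\D T\psi = \left[-\tfrac{1}{3}T^2-\tfrac{2}{3}\|T\|^2\right]\psi$. Now restrict to $\psi\in\Sigma_\mu$, i.e.\ $T\psi=\mu\psi$ and hence $T^2\psi=\mu^2\psi$. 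If $\mu=0$ then $T\psi=0$; combined with $\D T\psi = -\tfrac{2}{3}\|T\|^2\psi$ and the fact that $\D T\psi$ is computed from $\D(T\psi)$-type terms, one deduces $\|T\|^2\psi=0$, forcing $T=0$ or $\psi=0$. That disposes of case (1).

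For case (2), assume $\mu\neq 0$. From $T\psi=\mu\psi$ we get $\D T\psi = \D(\mu\psi)=\mu\D\psi$, so the specialized identity reads $\mu\,\D\psi = \left[-\tfrac{1}{3}\mu^2-\tfrac{2}{3}\|T\|^2\right]\psi$; dividing by $\mu$ gives the claimed eigenvalue
\bdm
\D\psi\ =\ -\tfrac{1}{3}\left[\mu+2\tfrac{\|T\|^2}{\mu}\right]\psi.
\edm
So $\psi$ is automatically a $\D$-eigenspinor. Next I would feed this into Lemma \ref{lem.twistorglg}: the twistor equation for $s=5/12$ is
\bdm
\nabla^c_X\psi + \tfrac{1}{6}X\cdot\D\psi + \tfrac{1}{6}(X\wedge T)\cdot\psi = 0.
\edm
Replacing $\D\psi$ by its scalar multiple of $\psi$ turns the middle term into $\tfrac{1}{6}\cdot\left(-\tfrac{1}{3}\right)\left[\mu+2\tfrac{\|T\|^2}{\mu}\right]X\cdot\psi = -\tfrac{1}{18}\left[\mu+2\tfrac{\|T\|^2}{\mu}\right]X\cdot\psi$, which is precisely the stated Killing equation in final form. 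To read off that this is the Killing equation $\nabla^s_X\psi=\kappa X\cdot\psi$ for the \emph{same} $s$, I would compare with Lemma \ref{gen-Killing-eq}: the equivalent form there is $\nabla^c_X\psi - \bigl[\kappa+\tfrac{\mu}{2(n-3)}\bigr]X\cdot\psi + \tfrac{1}{2(n-3)}(X\wedge T)\psi = 0$, so for $n=6$ one needs $\kappa+\tfrac{\mu}{6} = \tfrac{1}{18}\left[\mu+2\tfrac{\|T\|^2}{\mu}\right]$, i.e.\ $\kappa = \tfrac{1}{18}\mu + \tfrac{1}{9}\tfrac{\|T\|^2}{\mu} - \tfrac{1}{6}\mu = \tfrac{1}{9}\left[\tfrac{\|T\|^2}{\mu}-\mu\right]$, matching the claim. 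Constancy of the scalar curvature then follows from Lemma \ref{gen-Killing-eq}, which asserts exactly that.

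Finally, for the scalar curvature relation I would plug $n=6$ and the value $\kappa = \tfrac{1}{9}\bigl[\tfrac{\|T\|^2}{\mu}-\mu\bigr]$ into the quadratic constraint of Lemma \ref{gen-Killing-eq}, namely $n\bigl[\kappa+\tfrac{\mu}{2(n-3)}\bigr]^2 = \tfrac{1}{4(n-1)}\Scal^g + \tfrac{n-5}{8(n-3)^2}\|T\|^2 - \tfrac{n-4}{4(n-3)^2}\mu^2$. With $n=6$ the left side is $6\bigl[\kappa+\tfrac{\mu}{6}\bigr]^2 = 6\cdot\tfrac{1}{324}\bigl[\mu+2\tfrac{\|T\|^2}{\mu}\bigr]^2 = \tfrac{1}{54}\bigl[\mu^2 + 4\|T\|^2 + 4\tfrac{\|T\|^4}{\mu^2}\bigr]$, and the right side is $\tfrac{1}{20}\Scal^g + \tfrac{1}{72}\|T\|^2 - \tfrac{1}{18}\mu^2$; solving for $\Scal^g$ and collecting terms should yield $\tfrac{3}{10}\Scal^g = \tfrac{4}{9}\mu^2 + \tfrac{13}{36}\|T\|^2 + \tfrac{4}{9}\tfrac{\|T\|^4}{\mu^2}$. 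The only mildly delicate point is the $\mu=0$ subcase, where one must argue carefully that $\D T\psi = -\tfrac{2}{3}\|T\|^2\psi$ together with $T\psi=0$ genuinely forces $\|T\|^2\psi = 0$ rather than merely constraining $\D\psi$; this uses that the left-hand side of Lemma \ref{lem.kommutator} at $n=6$ is $\D(T\psi)+T\D\psi-T\D\psi$ — wait, more precisely it is built so that on $\Sigma_0$ it reduces to an expression involving only $T\psi$ and its $\D$-derivative, all of which vanish, leaving $-\tfrac{2}{3}\|T\|^2\psi=0$. I expect this bookkeeping in the $\mu=0$ case, and the correct tracking of constants in the final scalar-curvature identity, to be the only places requiring real care; everything else is substitution into results already established.
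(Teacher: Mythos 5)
Your proposal is correct and follows essentially the same route as the paper: specialize Lemma \ref{lem.kommutator} to $n=6$ so the $T\D$ term drops out, use $T\psi=\mu\psi$ to read off the $\D$-eigenvalue (with the $\mu=0$ degeneration handled exactly as you describe, since $\D T\psi=\D(T\psi)=0$ and $T^2\psi=0$ there), and substitute into the twistor equation of Lemma \ref{lem.twistorglg}. The only cosmetic differences are that you extract $\kappa$ by matching coefficients in Lemma \ref{gen-Killing-eq} rather than via $D^{5/12}\psi=-6\kappa\psi$, and you get the scalar-curvature identity from the quadratic constraint of Lemma \ref{gen-Killing-eq} rather than by comparing the $\D^2$ formula with the squared eigenvalue; both variants are arithmetically correct and equivalent to the paper's steps.
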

\begin{proof}
 Lemma \ref{lem.kommutator} yields for $n=6$ and a twistor spinor
$\psi$
\bdm
\D T\psi \ =\ - \frac{1}{3}\left[T^2 + 2 \|T\|^2 \right]\psi.
\edm
By assumption,  $T\psi=\mu\psi$, hence
\bdm
\mu\D\psi\ =\ - \frac{1}{3}\left[\mu^2 +2 \|T\|^2 \right]\psi.
\edm
In case $\mu=0$, we get $\|T\|^2\psi=0$, hence the first claim.
For $\mu\neq 0$, we may divide by $\mu$ and
 $\psi$ is an eigenspinor of $\D$. We
insert this eigenvalue relation into the twistor equation from
Lemma \ref{lem.twistorglg},
\bdm
\nabla^c_X\psi - \frac{1}{18} \left[ \mu + 2\frac{\|T\|^2}{\mu}\right]X\cdot
\psi + \frac{1}{6} (X\wedge T)\psi \ =\ 0.
\edm
On the other hand, suppose that
 $\vphi$ is a Killing spinor in $\Sigma_\mu$ for $s=\frac{5}{12}$.
The Killing equation $\nabla^s_X\vphi=\kappa X\cdot\vphi$ implies that
 $\vphi$ is an eigenspinor of $D^s$ with eigenvalue $-n\kappa$.
For us, this means
\bdm
D^{\frac{5}{12}}\vphi\ =\ (\D+T)\vphi\ =\ -6\kappa\vphi .
\edm
Since $\vphi$ is also an eigenspinor for $\D$, this last
equation yields for the Killing number  $\kappa$ the value
\bdm
\kappa\ =\ \frac{1}{9}\left[\frac{\|T\|^2}{\mu}-\mu\right] .
\edm
With this value of  $\kappa$, the Killing equation for  $s=\frac{5}{12}$
becomes equivalent to
\bdm
\nabla^c_X\vphi - \frac{1}{18} \left[ \mu + 2\frac{\|T\|^2}{\mu}\right]X\cdot
\vphi + \frac{1}{6} (X\wedge T)\vphi \ =\ 0 .
\edm
Hence, every twistor spinor in  $\Sigma_\mu$  is necessarily a Killing
spinor.

The claim on the scalar curvature follows essentially from
Lemma \ref{lem.twistorglg}. For a twistor spinor $\psi$ in the
$T$ eigenbundle $\Sigma_\mu$, it yields
\bdm
\D^2\psi=\frac{3}{10}\Scal^g\cdot\psi+\frac{1}{12}\|T\|^2\psi-\frac{1}{3}\mu^2\psi .
\edm
On the other side, the $\D$ eigenvalue equation squared amounts to
\bdm
\D^2\psi=\frac{1}{9}(\mu^2+4\|T\|^2+\frac{4}{\mu^2}\|T\|^4)\psi .
\edm
A direct comparison leads to the relation for $\Scal^g$.
\end{proof}
\begin{NB}
The value given for the scalar curvature in the previous Corollary allows to
solve explicitly the general quadratic equation for the Killing number
$\kappa$ stated in Lemma \ref{gen-Killing-eq}. One obtains the possible
solutions
\bdm
\kappa_1\ =\ \frac{1}{9}\left[\frac{\|T\|^2}{\mu}-\mu\right] \
\text{ or } \
\kappa_2\ =\ - \frac{1}{9}\left[\frac{\|T\|^2}{\mu}+ 2\mu\right]
\edm
The previous Lemma thus shows that $\kappa_2$ cannot occur.
\end{NB}
\begin{exa}
Consider a $6$-dimensional nearly K\"ahler manifold
$(M^6,g,J)$ with its characteristic connection $\nabla^c$
(see also the discussion at the end of Section \ref{subsection.par-spin}).
These are
Einstein spaces of positive scalar curvature,
$\|T\|^2 = \frac{2}{15}\Scalg$, and $T$ has the eigenvalues
$\mu=0$ of multiplicity $6$ and $\mu=\pm 2\|T\|$, each with multiplicity $1$,
and the torsion is always parallel \cite{Alexandrov&F&S04}.
It is well-known that it has two Riemannian
Killing spinors $\vphi_{\pm}$ \cite{Friedrich&G85},
that these coincide with the $\nabla^c$-parallel 
spinors \cite[Thm.~10.8]{Friedrich&I1}, and that they lie in $\Sigma_{\pm 2\|T\|}$.
As observed before,  $\beta_\tw(\mu)= \beta_\univ
(\mu)=\frac{2}{15}\Scalg$: Thus, \emph{any} $\nabla^c$-parallel spinor has to be
a twistor spinor with torsion by Corollary \ref{cor.twistorial-est}.
Corollary \ref{cor.TSequalsKS} then implies that it is already a Killing
spinor with torsion. If $\mu=0$, any twistor spinor with torsion would
have to be a Riemannian twistor spinor, but it is kwown that these do not
exist in $\Sigma_0$. Hence, we proved:
\begin{thm}\label{thm.nK-all-spinors}
On a $6$-dimensional nearly K\"ahler manifold $(M^6,g,J)$ with its
characteristic connection $\nabla^c$, the following classes
of spinors coincide:
\begin{enumerate}
\item Riemannian Killing spinors,
\item $\nabla^c$-parallel spinors,
\item Killing spinors with torsion $(s=5/12)$,
\item Twistor spinors with torsion $(s=5/12)$.
\end{enumerate}
Furthermore, there is exactly one such spinor $\vphi_{\pm}$  in each of
the subbundles $\Sigma_{\pm 2\|T\|}$, their Killing numbers (with torsion)
are $\displaystyle\kappa = \mp \frac{\|T\|}{6}= \mp \frac{1}{3} \sqrt{\frac{\Scalg}{30}} $
and their $\D$ eigenvalues are 
$\mp \|T\|= \mp \displaystyle\sqrt{\frac{2 \,\Scalg}{15}} $.
\end{thm}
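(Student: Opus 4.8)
The plan is to establish the coincidence of the four spinor classes on a $6$-dimensional nearly K\"ahler manifold by chaining together inclusions that have essentially all been prepared in the preceding sections. First I would recall the standard facts for nearly K\"ahler $6$-manifolds: they are Einstein with positive scalar curvature, the characteristic torsion $T$ is parallel, $\|T\|^2 = \tfrac{2}{15}\Scalg$, and $T$ acting on $\Sigma M$ has eigenvalues $\mu=0$ (multiplicity $6$) and $\mu=\pm 2\|T\|$ (each multiplicity $1$); moreover the two Riemannian Killing spinors $\vphi_\pm$ are known to coincide with the $\nabla^c$-parallel spinors (this is \cite[Thm.~10.8]{Friedrich&I1}) and to lie in $\Sigma_{\pm 2\|T\|}$. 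This already gives the equivalence (1)$\Leftrightarrow$(2). So the real work is to insert (3) and (4) into this chain.

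Next I would go from (2) to (4). A $\nabla^c$-parallel spinor $\psi\in\Sigma_{\pm 2\|T\|}$ is in particular a $\D$-eigenspinor (since $\D\psi = -\tfrac12 T\psi = \mp\|T\|\psi$), and hence a $\D^2$-eigenspinor realizing $\lambda = \|T\|^2 = \tfrac{2}{15}\Scalg$. On the other hand, in the discussion at the end of Section~\ref{subsection.par-spin} it was checked that for nearly K\"ahler manifolds $\beta_\tw(\mu) = \beta_\univ(\mu) = \tfrac{2}{15}\Scalg$ for $\mu = \pm 2\|T\|$, and the scalar curvature is constant. Therefore $\psi$ realizes equality in the twistorial eigenvalue estimate of Corollary~\ref{cor.twistorial-est-mu}, so by the equality statement there $\psi$ is a twistor spinor with torsion for $s = \tfrac{n-1}{4(n-3)} = \tfrac{5}{12}$. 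This proves (2)$\Rightarrow$(4).

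Then I would close the loop with (4)$\Rightarrow$(3) and handle the $\mu = 0$ case. Any twistor spinor with torsion lying in one of the eigenbundles $\Sigma_\mu$ falls under Corollary~\ref{cor.TSequalsKS}: if $\mu = 0$, that corollary forces $T = 0$ or $\psi = 0$, and since $T\neq 0$ here this would force $\psi$ to be a genuine Riemannian twistor spinor in $\Sigma_0$, which is known not to exist on a compact nearly K\"ahler $6$-manifold (positive scalar curvature, no Riemannian twistor spinors other than Killing spinors, and the Killing spinors sit in $\Sigma_{\pm 2\|T\|}$, not $\Sigma_0$). If $\mu = \pm 2\|T\| \neq 0$, Corollary~\ref{cor.TSequalsKS}(2) says the twistor equation for $s = 5/12$ is equivalent to the Killing equation for the same $s$, so $\psi$ is a Killing spinor with torsion; this gives (4)$\Rightarrow$(3). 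Finally, every Killing spinor with torsion is a twistor spinor with torsion by Lemma~\ref{gen-Killing-eq}, giving (3)$\Rightarrow$(4), and since a Killing spinor with torsion in a $T$-eigenspace is a $\D$-eigenspinor (Definition~\ref{Killing-spinor-wt}), the argument of the previous paragraph run backwards — together with the uniqueness of the $\nabla^c$-parallel spinors in each $\Sigma_{\pm 2\|T\|}$ — identifies it with $\vphi_\pm$, closing the chain back to (2).

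For the last sentence of the theorem I would just read off the numerical invariants. The Killing number comes from Corollary~\ref{cor.TSequalsKS}: with $\mu = \pm 2\|T\|$ and $\|T\|^2 = \tfrac{2}{15}\Scalg$, the formula $\kappa = \tfrac19\big(\tfrac{\|T\|^2}{\mu} - \mu\big)$ gives $\kappa = \tfrac19\big(\pm\tfrac{\|T\|}{2} \mp 2\|T\|\big) = \mp\tfrac{\|T\|}{6} = \mp\tfrac13\sqrt{\Scalg/30}$; the $\D$-eigenvalue is $\D\vphi_\pm = -\tfrac12 T\vphi_\pm = \mp\|T\| = \mp\sqrt{2\Scalg/15}$, consistent with the eigenvalue formula in Corollary~\ref{cor.TSequalsKS}(2) evaluated at $\mu = \pm 2\|T\|$. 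The only genuine subtlety — and the step I would treat most carefully — is the exclusion of nontrivial spinors in $\Sigma_0$: one must be sure that "no Riemannian twistor spinors in $\Sigma_0$'' is justified, which follows because on a compact simply connected nearly K\"ahler $6$-manifold the space of Riemannian twistor spinors is spanned by the two Killing spinors $\vphi_\pm$, and these lie in $\Sigma_{\pm 2\|T\|}$; everything else is bookkeeping with results already in hand.
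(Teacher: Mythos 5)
Your proposal is correct and follows essentially the same route as the paper: the known equivalence of Riemannian Killing spinors and $\nabla^c$-parallel spinors, the coincidence $\beta_{\tw}(\mu)=\beta_{\univ}(\mu)=\tfrac{2}{15}\Scalg$ to pass from parallel spinors to twistor spinors with torsion, Corollary \ref{cor.TSequalsKS} to upgrade twistor spinors to Killing spinors with torsion and to exclude $\mu=0$, and the same numerical evaluation of $\kappa$ and the $\D$-eigenvalue. If anything, your explicit remark on closing the loop back to (2) via the equality case of the universal estimate is slightly more careful than the paper's own (terse) argument.
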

\end{exa}
\noindent
Further examples of $6$-dimensional manifolds with Killing spinors with
torsion well be discussed in a forthcoming paper.
%
\section{Twistorial estimates for manifolds with reducible holonomy}
%
%
\noindent
Recall that we assume that $(M^n,g)$ is an oriented Riemannian
manifold endowed with a metric connection $\nabla$ with skew-symmetric torsion
$T\in\Lambda^3(M^n)$. The holonomy group $\Hol(M^n;\nabla)$
(sometimes just abbreviated $\Hol(\nabla)$ if no confusions are possible)
is then a subgroup of $\SO(n)$, and we shall assume that it is a closed subgroup to
avoid pathological cases. In order to distinguish it from the torsion,
the tangent bundle and its subbundles will be denoted by $\TM^n$,
$\T_1,\T_2\ldots$.
\begin{dfn}[parallel distribution]
Let $x\in M^n$ and $\T_x$  be a $\Hol(M^n;\nabla)$-invariant
subspace of $\TM^n$. For any other point $y$, choose a curve $\gamma$ from
$x$ to $y$ and denote by $\T_y$ the image of $\T_x$ in $\T_y M^n$ under parallel
transport along $\gamma$. The subspace  $\T_y$ does not depend on the choice
of $\gamma$, for any other  curve $\tilde{\gamma}$ defines a closed loop
through $x$ by $\mu:=\tilde{\gamma}^{-1}\gamma$ and $\T_x$ is by assumption
invariant under parallel transport along $\mu$, meaning
$\tilde{\gamma}^{-1}\gamma(T_x)=T_x$. This implies
$\gamma(T_x)=\tilde{\gamma}(T_x)$ as stated. In particular, any
$\Hol(M^n;\nabla)$-invariant subspace $\T_x\subset\T_x M^n $ defines a
distribution $\T\subset \TM^n$. Any distribution occuring in this way will be
called \emph{parallel}.
\end{dfn}
\noindent
The proof of the following basic lemma carries over from Riemannian geometry
 without modifications (see for example \cite[Prop.\,5.1]{Kobayashi&N1}).
\begin{lem}\label{basic-lemma}
Let $\T\subset\TM^n$ be a parallel distribution and $Y\in \T$. For any $X\in
\TM^n$, $\nabla_X Y$ is again in $\T$; in particular,
$R(X_1,X_2)Y\in\T$ for any $X_1,X_2$.
\end{lem}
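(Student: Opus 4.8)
The plan is to mimic the classical Riemannian argument (as in Kobayashi–Nomizu, Prop.~5.1 of Vol.~1), since the only input needed is that $\T$ is invariant under parallel transport along \emph{every} curve — which is exactly the content of the preceding definition — together with the fact that $\nabla$ is a linear connection, a property that does not care about torsion. So first I would fix a point $x\in M^n$, a vector $X\in T_xM^n$, and a local section $Y$ of $\T$ defined near $x$, and I would compute $\nabla_X Y$ by differentiating along an integral curve of $X$.

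Concretely: choose a curve $\gamma:(-\eps,\eps)\to M^n$ with $\gamma(0)=x$ and $\dot\gamma(0)=X$. Let $\tau_t:\T_{\gamma(t)}M^n\to\T_xM^n$ denote parallel transport back to $x$ along $\gamma$. By definition of the covariant derivative,
\bdm
\nabla_X Y\ =\ \left.\frac{d}{dt}\right|_{t=0}\tau_t\big(Y_{\gamma(t)}\big).
\edm
Now for each $t$ the vector $Y_{\gamma(t)}$ lies in $\T_{\gamma(t)}$, and by the definition of a parallel distribution — invariance of $\T$ under parallel transport along any curve — we have $\tau_t\big(\T_{\gamma(t)}\big)=\T_x$. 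Hence the whole curve $t\mapsto \tau_t(Y_{\gamma(t)})$ lies in the fixed linear subspace $\T_x\subset T_xM^n$, and therefore so does its derivative at $t=0$. This gives $\nabla_X Y\in\T_x$, i.e.\ $\nabla_X Y\in\T$. (Strictly speaking one should first extend $Y$ to a section over a neighborhood, or just over the image of $\gamma$; since $\nabla_X Y$ depends only on $Y$ along $\gamma$, this is harmless.)

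For the curvature statement one simply iterates. Since $\nabla$ preserves $\T$ by the first part, for any vector fields $X_1,X_2$ and any $Y\in\T$ the fields $\nabla_{X_1}Y$, $\nabla_{X_2}Y$, $\nabla_{X_1}\nabla_{X_2}Y$, $\nabla_{X_2}\nabla_{X_1}Y$, and $\nabla_{[X_1,X_2]}Y$ all lie in $\T$; hence
\bdm
R(X_1,X_2)Y\ =\ \nabla_{X_1}\nabla_{X_2}Y-\nabla_{X_2}\nabla_{X_1}Y-\nabla_{[X_1,X_2]}Y\ \in\ \T.
\edm

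There is essentially no obstacle here — the statement is purely formal. The only point requiring a little care is the very first step: making precise that $\nabla_X Y$ depends only on the restriction of $Y$ to an arbitrarily short arc through $x$ (so that one may replace $Y$ by a section defined only along $\gamma$), and invoking the coordinate-free ``parallel transport derivative'' formula for $\nabla$. Both are standard facts about linear connections and hold verbatim in the presence of torsion, so the proof is short.
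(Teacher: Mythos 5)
Your proof is correct and is exactly the argument the paper has in mind: the paper omits the proof, remarking only that it "carries over from Riemannian geometry without modifications" and citing Kobayashi--Nomizu Prop.\ 5.1, which is precisely the parallel-transport computation you wrote out. Your observation that the argument never uses torsion-freeness is the whole point of the remark in the paper, so nothing is missing.
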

For a torsion free connection, this property implies of course that any
parallel distribution is involutive;  but for general metric connections this
conclusion does not hold anymore.

Let $\T$ be a parallel distribution, $\mathcal{N}$ its orthogonal distribution defined
by $\mathcal{N}_x:=\T_x^\perp$ in every point $x\in M^n$. The fact that all elements
of $\Hol(M^n;\nabla)$ are orthogonal transformations implies that
$\mathcal{N}$ is again a parallel distribution. Thus, the tangent bundle
splits into an orthogonal sum of parallel distributions ($n_i:=\dim\T_i$)
\bdm
\TM^n\ =\ \T_1\oplus\ldots\oplus \T_k, \text{ and }
\Hol(M^n;\nabla)\subset \Orth(n_1)\x\ldots\x\Orth(n_k)\subset\SO(n).
\edm
We assume that every distribution $\T_i$ is again orientable and that
the holonomy preserves the orientation, i.\,e.~\emph{we assume}
\bdm
\Hol(M^n;\nabla)\subset \SO(n_1)\x\ldots\x\SO(n_k).
\edm
In this case, every
parallel distribution $\T_i$ defines a parallel
$n_i$-form: if $X_1,\ldots,X_{n_i}$ is a generating frame for $\T_i$, then
$\alpha_i:=X_1\wedge \ldots\wedge X_{n_i}$ is a differential form and spans a
$1$-dimensional $\SO(n_i)$-invariant
subspace of $\Lambda^{n_i}(M^n)$. This is  necessarily the trivial
representation, meaning that $\alpha_i$ is invariant under parallel transport.
We agree that any  orthonormal frame $e_1,\ldots,e_m$  of
$\TM^n$ shall respect the splitting in parallel distributions (i.\,e.~no
$e_k$ has parts in different distributions $\T_i$),  and that
$e^i_{1},\ldots,e^i_{n_i}$ is to denote an orthonormal frame of $\T_i$,
$i=1,\ldots, k$.

We will now describe the `block structure' of the curvature.
All curvatures are meant to be those of the connection $\nabla$.
Recall that the curvature tensor of a metric connection has  the symmetry
property
\bdm
g(\mathcal{R}(X,Y)W_1,W_2)\ =\ -g(\mathcal{R}(X,Y)W_2,W_1).
\edm
Since the distributions $\T_i,\T_j$ are orthogonal,
Lemma \ref{basic-lemma} implies for any vector fields $X,Y$ that
\be\label{sym1}
g(\mathcal{R}(X,Y)\T_i,\T_j)=0 \text{ if } i\neq j.
\ee
Furthermore, the Ambrose-Singer theorem implies that the curvature operator
$R(X,Y)$ vanishes if  $X\in\T_i,\ Y\in \T_j,\ i\neq j$,
\be\label{sym2}
R(\T_i,\T_j)\ =\ 0 \ \text{ for } i\neq j,
\ee
since the holonomy group is generated by all curvature operators
\cite[Thm 10.58]{Besse87}. We now consider  the Ricci curvature.
\begin{prop}\label{curv-splitting}
 The Ricci tensor has block structure,
\bdm
\Ric\ =\ \left[ \begin{array}{c|c|c} \Ric_1 & 0&\\ \hline 0 &\ddots& 0\\
    \hline  & 0 & \Ric_k
 \end{array}\right],
\edm
  i.\,e.~$\Ric(X,Y)\neq 0$ can only happen if  $X,Y\in\T_i$ for some $i$.
\item The scalar curvature splits into `partial scalar curvatures'
$\Scal_i:=\tr\, \Ric_i$, and ${\displaystyle\Scal=\sum_{i=1}^k \Scal_i}$.
\end{prop}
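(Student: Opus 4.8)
The plan is to compute the Ricci tensor of $\nabla$ in an orthonormal frame $e_1,\dots,e_n$ adapted to the splitting $\TM^n=\T_1\oplus\dots\oplus\T_k$, as agreed above, so that each frame vector $e_a$ lies in exactly one of the parallel distributions, say $e_a\in\T_{l(a)}$, and recall that $\Ric(X,Y)=\sum_{a=1}^n g(\mathcal{R}(e_a,X)Y,e_a)$.

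For the block structure I would fix $i\neq j$, take $X\in\T_i$ and $Y\in\T_j$, and show that every summand $g(\mathcal{R}(e_a,X)Y,e_a)$ vanishes. If $e_a\in\T_l$ with $l\neq i$, then $e_a$ and $X$ lie in different parallel distributions, so $\mathcal{R}(e_a,X)=0$ by \eqref{sym2} and the summand is zero. If instead $e_a\in\T_i$, then by Lemma \ref{basic-lemma} the vector $\mathcal{R}(e_a,X)Y$ remains in $\T_j$, hence is orthogonal to $e_a\in\T_i$, and the summand is again zero. (Either case may alternatively be read off directly from \eqref{sym1}.) Summing over $a$ gives $\Ric(X,Y)=0$, which is exactly the asserted block structure, so that $\Ric_i:=\Ric|_{\T_i\times\T_i}$ is a well-defined bilinear form on $\T_i$; it is symmetric because $\nabla$ is metric. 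The statement on scalar curvature is then immediate: regrouping $\Scal=\sum_{a=1}^n\Ric(e_a,e_a)$ according to which distribution each $e_a$ belongs to yields $\Scal=\sum_{i=1}^k\sum_{e_a\in\T_i}\Ric(e_a,e_a)=\sum_{i=1}^k\tr\Ric_i=\sum_{i=1}^k\Scal_i$, since $e^i_1,\dots,e^i_{n_i}$ is an orthonormal frame of $\T_i$.

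I do not expect a genuine obstacle here: once \eqref{sym1}, \eqref{sym2} and Lemma \ref{basic-lemma} are in hand, the argument is pure bookkeeping with an adapted frame. The only points requiring a little care are to keep the antisymmetry of $\mathcal{R}(\cdot,\cdot)$ straight when invoking \eqref{sym2}, and to note that it is precisely the metricity of $\nabla$ (equivalently, the already-established skew-symmetry $g(\mathcal{R}(X,Y)W_1,W_2)=-g(\mathcal{R}(X,Y)W_2,W_1)$) that makes each diagonal block $\Ric_i$ a symmetric bilinear form.
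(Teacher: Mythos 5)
Your argument is correct and is essentially the paper's own proof: both use the adapted frame together with (\ref{sym1}), (\ref{sym2}) and Lemma \ref{basic-lemma} to show that every summand $g(\mathcal{R}(e_a,X)Y,e_a)$ vanishes unless $e_a$, $X$, $Y$ all lie in one and the same $\T_i$, and then obtain the scalar curvature splitting by regrouping the trace. One caveat on your parenthetical remark: metricity of $\nabla$ alone does \emph{not} make the Ricci tensor (hence $\Ric_i$) symmetric --- for a metric connection with skew torsion the antisymmetric part of $\Ric$ is governed by $\delta T$, so symmetry requires $\delta T=0$, which here follows from the parallel-torsion hypothesis (cf.\ Theorem \ref{thm.curv-splitting}); since symmetry is not asserted in the Proposition, this slip does not affect the validity of your proof.
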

\begin{proof}
A summand of the Ricci tensor $\Ric(X,Y)=\sum_{m=1}^n R(e_m,X,Y,e_m)$
can only be non trivial if the vectors $(e_m, X)$ lie in the same $\T_i$
by equation  (\ref{sym2}), and if the vectors  $(e_m,Y)$ lie in the same $\T_j$
by equation (\ref{sym1}). But since the vector $e_m$ is the same in both cases
(and thus cannot lie in two different distributions),
we conclude that $e_m, X,$ and $Y$ all have to lie in some $\T_i$.
 Thus we can define
\bdm
\Ric_i(X,Y)\ :=\ \sum_{m=1}^{n_i} R(e^i_{m},X,Y,e^i_{m}).
\edm
Then, $\Ric=\sum_{i=1}^k\Ric_i$ and it has the stated block structure. The
partial scalar curvatures are now just the traces of
these partial Ricci tensors.
\end{proof}
\begin{NB}
Observe that the partial Ricci tensor vanishes in directions of parallel
vector fields ($n_i=1$).
\end{NB}
\begin{NB}
Be cautious that despite of the block structure of the Ricci curvature,
one has in general that  $R(X,Y,U,V)\neq 0$ if $X,Y\in \T_i,\ U,V\in  \T_j$
for $i\neq j$.
\end{NB}
Let us now consider, as in the first part of this paper, the $1$-parameter
family of connections
\bdm
\nabla^s_X Y \ =\ \nabla^g_X Y + 2s\, T(X,Y,-).
\edm
All quantities (curvature etc.)
belonging to the connection $\nabla^s$ will carry an upper index $s$.
We make the following crucial assumptions:
\begin{dfn}\label{dfn.geom-redpargeom}
Assume that
\begin{enumerate}
\item  there exists a value $s_0$ such that
$\nabla^{s_0}T=0$; without loss of generality, we will assume that
the torsion is normalized in such a way that $s_0=1/4$.
Instead of $\nabla^{1/4}$, we will write $\nabla^c$,
\item  the tangent bundle $\T M^n=\bigoplus_{i=1}^k \T_i$ splits into 
$\nabla^{s}$-parallel distributions $\T_i$ for all
parameters $s$ and  $\Hol(M^n;\nabla^{s})\subset
\SO(n_1)\x\ldots\x\SO(n_k)$,
\item the torsion splits into a sum  $T=\sum_{i=1}^k T_i$, $T_i\in\Lambda^3(\T_i)$. 
\end{enumerate}
We observe that the conditions contain some redundancy:
if the torsion splits as described, then it is sufficient to assume
that the tangent bundle is a sum of parallel distributions \emph{for
one parameter $s$}.

By de Rham's Theorem, $(M^n,g)$ is then locally a product of
Riemannian manifolds, i.\,e.~the universal cover $\tilde{M}$ of $M$ splits
into $\tilde{M}=M_1 \x \ldots\x M_k$ with $\dim M_i=n_i$.
A result of Cleyton and Moroianu \cite{Cleyton&Moroianu}
implies that each $T_i$ satisfies $\nabla^cT_i=0$, and in fact, each $T_i$ 
is the projection to $M$ of a $3$-form in $\Lambda^3 (M_i)$. 
In the sequel, we shall
call a manifold satisfying these
assumptions a \emph{geometry with reducible parallel torsion}.
\end{dfn}
\begin{NB}\label{rem.EVs-T2}
Since the forms $T_i$ live on disjoint distributions, $T_i T_j=-T_j T_i$
for $i\neq j$ in the Clifford algebra. This implies $T^2=\sum_{i=1}^k T_i^2$
and hence every eigenvalue $\mu^2$ of $T^2$ is a sum of eigenvalues of
the single $T_i^2$. By the orthogonality of the distributions $\T_i$,
the identity $\|T\|^2=\sum_{i=1}^k \|T_i\|^2$ holds.
\end{NB}
We can deduce some further curvature properties from this assumption:
\begin{lem}\label{curv-splitting-2}
If $(M,g)$ carries a geometry with reducible parallel torsion, 
$\mathcal{R}^s(X,Y,Z,V)$ can only be non-zero if all vectors lie
in the same subspace $\T_i$ for some $i$.
Furthermore, the $4$-form
$\sigma_T$ splits in $\displaystyle \sigma_T=\sum_{i=1}^k \sigma_i$,
where $\sigma_i:=\sigma_{T^i}$.
\end{lem}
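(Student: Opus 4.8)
The plan is to prove \Cref{curv-splitting-2} by splitting it into its two assertions --- the block structure of the curvature $\mathcal R^s$, and the splitting of $\sigma_T$ --- and treating each separately, since the first follows from the previously developed curvature machinery while the second is a purely algebraic fact about the Clifford-square of a sum of forms living on orthogonal distributions.

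\textbf{Step 1: Block structure of $\mathcal R^s$.} Under the assumptions of \Cref{dfn.geom-redpargeom}, the tangent bundle splits into $\nabla^s$-parallel distributions $\T_i$ for \emph{every} $s$, and $\Hol(M^n;\nabla^s)\subset\SO(n_1)\times\dots\times\SO(n_k)$. So \Cref{sym1} and \Cref{sym2} --- whose proofs used only that the $\T_i$ are $\nabla$-parallel, orthogonal, and that the holonomy is generated by the curvature operators via Ambrose--Singer --- apply verbatim with $\nabla$ replaced by $\nabla^s$. Thus $\mathcal R^s(X,Y)\T_i=0$ whenever $X\in\T_i$, $Y\in\T_j$ with $i\neq j$ (from \Cref{sym2}), and $g(\mathcal R^s(X,Y)\T_i,\T_j)=0$ for $i\neq j$ (from \Cref{sym1}). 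To upgrade this to the claim that $\mathcal R^s(X,Y,Z,V)$ can only be nonzero if \emph{all four} of $X,Y,Z,V$ lie in a common $\T_i$, I would argue as follows: first expand each argument in components along the $\T_i$; by $\SO(n_1)\times\dots\times\SO(n_k)$-reducibility the curvature $2$-form takes values in $\so(n_1)\oplus\dots\oplus\so(n_k)$, so $\mathcal R^s(X,Y)$ preserves each $\T_i$ and acts as zero off the diagonal blocks, forcing $Z$ and $V$ into the same block for a nonzero contribution; then, as in the proof of \Cref{curv-splitting}, the vanishing $R^s(\T_i,\T_j)=0$ for $i\neq j$ forces $X$ and $Y$ into that same block as well. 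This is essentially the same bookkeeping argument used for the Ricci tensor in \Cref{curv-splitting}, now applied to the full curvature tensor of $\nabla^s$.

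\textbf{Step 2: Splitting of $\sigma_T$.} By hypothesis $T=\sum_{i=1}^k T_i$ with $T_i\in\Lambda^3(\T_i)$. Write $\sigma_{T^i}:=\tfrac12\sum_m(e_m\haken T_i)\wedge(e_m\haken T_i)$; since $T_i$ lives on $\T_i$, only the frame vectors $e_m\in\T_i$ contribute, so $\sigma_{T^i}=\tfrac12\sum_{m=1}^{n_i}(e^i_m\haken T_i)\wedge(e^i_m\haken T_i)\in\Lambda^4(\T_i)$. Now plug $T=\sum_i T_i$ into the definition of $\sigma_T$ and expand bilinearly: $\sigma_T=\tfrac12\sum_m\sum_{i,j}(e_m\haken T_i)\wedge(e_m\haken T_j)$. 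The diagonal terms $i=j$ give $\sum_i\sigma_{T^i}$. For the cross terms $i\neq j$, note $e_m\haken T_i$ is supported on $\T_i$ and $e_m\haken T_j$ on $\T_j$; moreover $e_m\haken T_i\neq 0$ requires $e_m\in\T_i$, while $e_m\haken T_j\neq 0$ requires $e_m\in\T_j$ --- and a single frame vector cannot lie in two distinct distributions. Hence each cross term vanishes termwise, and $\sigma_T=\sum_{i=1}^k\sigma_{T^i}$ as claimed.

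\textbf{Main obstacle.} The genuinely nontrivial content is Step 1, and within it the only delicate point is making rigorous the passage from ``$\mathcal R^s(X,Y)$ preserves the block decomposition and kills off-diagonal pieces'' to the four-index statement; I expect this to be a short argument but one that should be spelled out carefully, since \Cref{curv-splitting} only records it for the Ricci tensor (where one index is already traced) and for the \emph{full} tensor one must genuinely invoke both \Cref{sym1} and \Cref{sym2} together. Step 2 is purely formal once one observes that the interior product $e_m\haken(\cdot)$ against a form on $\T_i$ is nonzero only for $e_m\in\T_i$; no curvature or connection input is needed there at all. One should also take care to state that the frame $e_1,\dots,e_n$ is chosen to respect the splitting, as agreed earlier in the section, so that ``$e_m\in\T_i$ for exactly one $i$'' is legitimate.
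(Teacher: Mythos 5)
Your Step 2 is correct and is exactly what the paper means by ``$\sigma_T=\sum\sigma_i$ by definition of $\sigma_T$''; spelling out that the cross terms die because a frame vector cannot have nonzero interior product with both $T_i$ and $T_j$ is fine. The problem is Step 1. Equations (\ref{sym1}) and (\ref{sym2}) (transferred to $\nabla^s$) only give you that $Z,V$ must lie in a common block $\T_j$ and that $X,Y$ must lie in a common block $\T_i$; nothing in that argument forces $i=j$. Your phrase ``forces $X$ and $Y$ into \emph{that same} block as well'' is precisely the unproved step. In the Ricci computation of Proposition \ref{curv-splitting} the two blocks are glued together by the repeated summation index $e_m$, which sits in both the first and the fourth slot; for the full curvature tensor there is no repeated argument, so that trick is unavailable. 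The paper even warns about this explicitly in the Remark following Proposition \ref{curv-splitting}: for a general metric connection with reducible holonomy one can have $R(X,Y,U,V)\neq 0$ with $X,Y\in\T_i$, $U,V\in\T_j$, $i\neq j$. So the statement you are trying to prove is \emph{false} at the level of generality your Step 1 uses.

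The missing ingredient is the first Bianchi identity with parallel torsion (Theorem \ref{thm.curv-splitting}): $\mathfrak{S}_{X,Y,U}\,\mathcal{R}^s(X,Y,U,V)=s(6-8s)\,\sigma_T(X,Y,U,V)$. Taking $X,Y\in\T_i$ and $U,V\in\T_j$ with $i\neq j$, the two cyclic partners $\mathcal{R}^s(Y,U,X,V)$ and $\mathcal{R}^s(U,X,Y,V)$ have mixed first pairs and vanish by (\ref{sym2}), leaving $\mathcal{R}^s(X,Y,U,V)=s(6-8s)\,\sigma_T(X,Y,U,V)$; and the right-hand side vanishes on mixed arguments precisely because of the splitting $\sigma_T=\sum\sigma_i$ you established in Step 2. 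Note the logical order this imposes: your Step 2 is not an independent afterthought but an input to Step 1, and the hypotheses of Definition \ref{dfn.geom-redpargeom} (parallel torsion, torsion splitting along the $\T_i$) enter essentially through the Bianchi identity rather than only through the holonomy reduction.
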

\begin{proof}
The splitting $T=\sum T_i$
implies $\sigma_T=\sum \sigma_i$, $\sigma_i:=\sigma_{T^i}$, by definition
of $\sigma_T$. 
From Theorem \ref{thm.curv-splitting}, we know that the Bianchi identity in this
case reads 
\bdm
\stackrel{X,Y,Z}{\mathfrak{S}} \mathcal{R}^s(X,Y,Z,V)\ =\
s \left[6-8s\right]\, \sigma_T(X,Y,Z,V).
\edm
Consider now vectors $X,Y,U,V$. In order for $\mathcal{R}^s(X,Y,U,V)$ to be possibly
non zero, we need to assume that $X,Y\in \T_i,\ U,V\in T_j$ for some
indices $i,j$. The elements $\mathcal{R}^s(Y,U,X,V)$ and 
$\mathcal{R}^s(U,X,Y,V)$, however,
vanish from equations  (\ref{sym2}) and (\ref{sym1}), thus
we are left in this case with
\bdm
\mathcal{R}^s(X,Y,U,V) \ =\ s(6-8s)\, \sigma_T(X,Y,Z,V).
\edm
But  $\sigma_T(X,Y,Z,V)$ can only be different from zero if
all vectors lie in the same $\T_i$.
\end{proof}
 Assuming that $M^n$ is spin,
the curvature $R_\Sigma$ of the spinor bundle $\Sigma M^n$ for the connection
$\nabla^s$
\bdm
R^s_\Sigma (X,Y)\psi\ :=\ \nabla^s_X\nabla^s_Y\psi - \nabla^s_Y\nabla^s_X\psi
-\nabla^s_{[X,Y]}\psi
\edm
is related to the curvature $\mathcal{R}^s$ of the tangent bundle $\TM^n$ by
\bdm
R^s_\Sigma (X,Y)\psi\ =\ \frac{1}{2}R^s(X\wedge Y)\cdot\psi,
\edm
where we interpret the curvature transformation $\mathcal{R}^s$ as an endomorphism
on $2$-forms: $\mathcal{R}^s(e_i\wedge e_j):=\sum_{k<l}R^s_{ijkl}e_k\wedge e_l$. This allows
to draw conclusions on $R^s_\Sigma$ from the described splittings. 
%

\subsection*{Partial Schr\"odinger-Lichnerowicz formulas}
\noindent
%
The splitting of the tangent bundle makes a certain amount of bookkeeping
unavoidable. Let $p_i$ denote the orthogonal projection from $\TM^n$ onto
$\T_i$ and define the `partial connections'
\bdm
\nabla^{s,i}_X\ :=\ \nabla^s_{p_i(X)} ,\quad \text{hence }
\nabla^s\ =\ \sum_{i=1}^k \nabla^{s,i}.
\edm
They induce the notions of `partial Dirac operators' and `partial spinor Laplacians'
($\mu$ is the usual Clifford multiplication) through
\bdm
D_i^s\ :=\ \mu\circ\nabla^{s,i}, \quad D\ =\ \sum_{i=1}^k D^s_i,\quad
\Delta^s_ i \ :=\ (\nabla^{s,i})^*\nabla^{s,i},\quad \Delta^s \ =\
\sum_{i=1}^k \Delta^s_i.
\edm
At a fixed point $p\in M^n$ we choose orthonormal bases
$e^i_1,\ldots,e^i_{n_i}$  of the distributions $\T_i$ ($i=1,\ldots,k$) such that
$(\nabla^s_{e^i_m}e^j_l)_p = 0 $ for all suitable indices $i,j,m,l$.
Note that our chosen basis has the
properties $[e^i_m,e^j_l]=-T(e^i_m,e^j_l)$ and $\nabla^g_{e^i_m}e^i_m=0$. It is
convenient (and consistent with the notation introduced above) to abbreviate
$\nabla^s_{e^i_m}$ by $\nabla^{s,i}_m$. The partial Dirac and Laplace operators
may then be expressed as
\bdm
D^s_i\ :=\ \sum_{m=1}^{n_i} e^i_{m}\nabla^{s,i}_m,\quad
\Delta^s_i \ :=\ -\sum_{m=1}^{n_i} \nabla^{s,i}_m\nabla^{s,i}_m.
\edm
The divergence term of the Laplacian vanishes because of
$\nabla^g_{e^i_m}e^i_m=0$.
We compute the squares of the partial Dirac operators $D_i$ and their
anticommutators. To formulate the statement, set
\bdm
\mathcal{D}^s_i\ :=\ \sum_{m=1}^{n_i} (e^i_m\haken T_i)\cdot \nabla^{s,i}_{e^i_m}\psi
\edm
in full analogy to the classical case without splitting of $\T M$ ($k=1$).
\begin{prop}\label{partial-SL-1}
%
Assume that $M$ carries a geometry with reducible parallel torsion.
The partial Dirac operators $D^s_i$ then satisfy the identities
\begin{enumerate}
\renewcommand{\labelenumi}{(\roman{enumi})}
\item ${\displaystyle (D^s_i)^2 \, =\, \Delta^s_i + s (6-8s)\, {\sigma}_i
 -4s \mathcal{D}^s_i  + \frac{1}{4}\Scal^s_i} $,
\item $\displaystyle D^s_iD^s_j+D^s_jD^s_i = 0$ for $i\neq j$,
\item $\displaystyle (D^{s/3}_i)^2\ =\ 
\Delta^s_i+ 2s\,\sigma_i  +\frac{1}{4}\Scal^g_i-2s^2\|T_i\|^2$.
\end{enumerate}

\end{prop}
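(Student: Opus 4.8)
The plan is to obtain all three identities as the $\T_i$-localizations of facts already available for a single connection with skew torsion: (i) is the restriction to $\T_i$ of Theorem~\ref{thm.Fr&I}(1) (simplified by $\nabla^cT=0$), (ii) expresses that Dirac operators attached to different, mutually orthogonal blocks anticommute, and (iii) is the restriction to $\T_i$ of the Bismut formula of Theorem~\ref{thm.gSLF}. Concretely I would run the proofs of those results with every operator replaced by its $\T_i$-part, computing at the fixed point $p$ in the adapted frame $e^i_m$, and use the curvature-splitting results of this section to kill every term that couples two different blocks. The structural inputs that make this work are: by condition~(3) of Definition~\ref{dfn.geom-redpargeom} one has $e^i_m\haken T=e^i_m\haken T_i$, hence $\sum_m e^i_m(e^i_m\haken T)=3T_i$ and, exactly as in Lemma~\ref{lem.fund-calc}, $D^s_i=D^0_i+3sT_i$ and $D^{s/3}_i=D^0_i+sT_i$; the bracket $[e^i_m,e^i_{m'}]$ stays in $\T_i$ (Lemma~\ref{basic-lemma}) while $[e^i_m,e^j_l]=0$ at $p$ for $i\neq j$; the mixed curvature operators $R^s(\T_i,\T_j)$ vanish for $i\neq j$; and $\sigma_T=\sum_i\sigma_i$, $\|T\|^2=\sum_i\|T_i\|^2$ (Lemma~\ref{curv-splitting-2}, Remark~\ref{rem.EVs-T2}).

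Identity (ii) is the quickest: expanding $D^s_iD^s_j+D^s_jD^s_i$ in the frame and using $e^i_m e^j_l=-e^j_l e^i_m$ for $i\neq j$, the statement reduces to showing that $\nabla^{s,i}_m$ and $\nabla^{s,j}_l$ commute at $p$; their commutator is $R^s_\Sigma(e^i_m,e^j_l)+\nabla^s_{[e^i_m,e^j_l]}$, and both terms vanish --- the first since $R^s_\Sigma(e^i_m,e^j_l)=\tfrac12 R^s(e^i_m\wedge e^j_l)\cdot\,{=}\,0$ by~(\ref{sym2}), the second since $[e^i_m,e^j_l]=0$ at $p$. For identity (i) I would expand $(D^s_i)^2=\sum_{m,m'}e^i_m e^i_{m'}\nabla^{s,i}_m\nabla^{s,i}_{m'}$: the diagonal $m=m'$ yields $\Delta^s_i$ (the divergence term drops because $\nabla^g_{e^i_m}e^i_m=0$), and the off-diagonal part $\tfrac12\sum_{m\neq m'}e^i_m e^i_{m'}\big(R^s_\Sigma(e^i_m,e^i_{m'})+\nabla^s_{[e^i_m,e^i_{m'}]}\big)$ splits, just as in the proof of Theorem~\ref{thm.Fr&I}(1), into a bracket contribution reorganizing to $-4s\,\mathcal{D}^s_i$ and a curvature contribution giving $\tfrac14\Scal^s_i$ plus a Clifford contraction of the cyclic sum $\mathfrak{S}_{X,Y,Z}\,\mathcal{R}^s(X,Y,Z,V)$. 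The single point where the splitting enters is that, by Lemma~\ref{curv-splitting-2}, this cyclic sum restricted to $\T_i$ equals $s(6-8s)\,\sigma_i$ and not $s(6-8s)\,\sigma_T$; matching coefficients against the unsplit identity then gives~(i).

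For identity (iii) I would derive it from~(i) by repeating the algebraic passage from Theorem~\ref{thm.Fr&I} to the Bismut formula of Theorem~\ref{thm.gSLF}, now entirely within $\T_i$. Besides~(i), this uses the partial anticommutator $D^s_iT_i+T_iD^s_i=dT_i+\delta T_i-8s\,\sigma_i-2\mathcal{D}^s_i$ (the $\T_i$-restriction of Theorem~\ref{thm.Fr&I}(2), proved in the same way), the consequences $dT_i=2\sigma_i$ and $\delta T_i=0$ of $\nabla^cT_i=0$, the relations $D^{s/3}_i=D^0_i+sT_i$, $D^s_i=D^0_i+3sT_i$, and the block identity $\Scal^s_i=\Scal^g_i-24s^2\|T_i\|^2$ --- the last following by evaluating the pointwise formula for $\Ric^s-\Ric^g$ on the $i$-th block, which only sees $T_i$ since $T$ has no mixed components. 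Collecting terms reproduces $(D^{s/3}_i)^2=\Delta^s_i+2s\,\sigma_i+\tfrac14\Scal^g_i-2s^2\|T_i\|^2$.

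I expect no genuinely new analytic input to be required: the main obstacle is purely the bookkeeping needed to check that, in the adapted frame, every bracket, every curvature operator and every Clifford contraction that could couple two distinct blocks indeed vanishes. Once that is in place, the proofs of Theorems~\ref{thm.Fr&I} and~\ref{thm.gSLF} go through verbatim with $T$ replaced by $T_i$ and with the cyclic curvature term replaced by $\sigma_i$.
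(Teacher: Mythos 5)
Your proposal is correct and follows essentially the same route as the paper: expanding $(D^s_i)^2$ and $D^s_iD^s_j+D^s_jD^s_i$ in the adapted frame, using the curvature splitting and the absence of mixed torsion components to kill cross terms, identifying $\tfrac14\Scal^s_i$ and $s(6-8s)\sigma_i$ via the symmetry of $\Ric^s$ and the Bianchi identity, and obtaining (iii) by the same algebra as Theorem~\ref{thm.gSLF} (which the paper leaves as a "routine calculation"). No gaps.
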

\begin{proof}
%
For the first identity, let $k$ and $l$ be indices running
between $1$ and $\dim \T_i=n_i$. We  split the sum into
terms with $k=l$ and $k\neq l$,
\bea[*]
(D^s_i)^2\psi & =& \sum_{k,l=1}^{n_i} e^i_k\nabla^{s,i}_k e^i_l\nabla^{s,i}_l\psi
\ =\  -\sum_{k=1}^{n_i} \nabla^{s,i}_k\nabla^{s,i}_k\psi+\sum_{k\neq l}
e^i_ke^i_l\nabla^{s,i}_k\nabla^i_l\psi\\
& =& \Delta^s_i+\sum_{k< l}e^i_ke^i_l(\nabla^{s,i}_k\nabla^{s,i}_l-
\nabla^{s,i}_l\nabla^{s,i}_k)\psi
\eea[*]
und express the second term through the curvature in the spinor bundle,
\bdm
(D^s_i)^2\psi \, =\, \Delta^s_i\psi + \sum_{k<l} e^i_ke^i_l\left[
\mathcal{R}^s_\Sigma(e^i_k,e^i_l) - \nabla^{s,i}_{T(e^i_k,e^i_l)}\right] \psi.
\edm
$\mathcal{R}^s_\Sigma$ in turn can be expressed through the curvature $R$, and by
Lemma \ref{curv-splitting-2}, only terms with all four vectors inside
$\T_i$ can occur:
\bdm \sum_{k<l} e^i_ke^i_l \mathcal{R}^s_\Sigma(e^i_k,e^i_l)\ =\
\frac{1}{2}\sum_{k<l} e^i_ke^i_l \mathcal{R}^s(e^i_k\wedge e^i_l)\cdot \psi\
=\ \frac{1}{2}\sum_{k<l, p<q} \mathcal{R}^s(e^i_k,e^i_l,e^i_p,e^i_q)
e^i_ke^i_le^i_pe^i_q\psi.
\edm
The summands with same index pairs add up to  half the partial
scalar curvature,
while totally different indices  yield the Clifford multiplication by 
the $4$-form $\sigma^i$.
Index pairs with one common index add up to zero, because
the Ricci tensor is symmetric (Theorem \ref{thm.curv-splitting}). 
The third identity follows by a routine calculation.

For the second identity and $i\neq j$, we proceed similarly.
However, there is no  diagonal term resulting in an analogue of
the Laplacian,  hence
\bdm
D^s_i D^s_j + D^s_j D^s_i \ =\ 
\sum_{k,l}e^i_k e^j_l \left[\mathcal{R}^s_\Sigma(e^i_k,e^j_l) - 
\nabla^{s}_{T(e^i_k,e^j_l)}\right].
\edm
But the mixed curvature operator vanishes as observed in equation
(\ref{sym2}), and $T(\T_i,\T_j)$ is zero as well by the assumption that $T$
does not contain any mixed terms.
\end{proof}
%
\noindent
The second identity has a crucial consequence:  all the operators
$(D^s)^2, (D^s_1)^2,\ldots, (D^s_k)^2$ can be simultaneously diagonalized.
\begin{lem}\label{sum-Diracs}
For all parameters $s$, $\displaystyle (D^s)^2=\sum_{i=1}^k (D^s_i)^2$ and
$(D^s_i)^2\, (D^s_j)^2 = (D^s_j)^2\, (D^s_i)^2\ \forall\, i\neq j$.
\end{lem}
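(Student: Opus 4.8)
The plan is to derive both assertions directly from Proposition \ref{partial-SL-1}. First I would establish the decomposition $(D^s)^2 = \sum_{i=1}^k (D^s_i)^2$. Starting from $D^s = \sum_{i=1}^k D^s_i$ and squaring, one gets $(D^s)^2 = \sum_{i} (D^s_i)^2 + \sum_{i\neq j}(D^s_i D^s_j + D^s_j D^s_i)$; applying identity (ii) of Proposition \ref{partial-SL-1}, which states precisely that the mixed anticommutators vanish, kills the second sum and yields the claimed formula. Alternatively, and perhaps more cleanly, one can read it off from identity (i): summing $(D^s_i)^2 = \Delta^s_i + s(6-8s)\sigma_i - 4s\mathcal{D}^s_i + \tfrac14\Scal^s_i$ over $i$, and using $\Delta^s = \sum_i \Delta^s_i$, $\sigma_T = \sum_i \sigma_i$ (Lemma \ref{curv-splitting-2}), $\mathcal{D}^s = \sum_i \mathcal{D}^s_i$, and $\Scal^s = \sum_i \Scal^s_i$ (Proposition \ref{curv-splitting}), one obtains exactly the classical formula for $(D^s)^2$ from Theorem \ref{thm.Fr&I}(1) (with $dT$, $\delta T$ expressed via $\sigma_T$ under the parallel-torsion hypothesis), so that $\sum_i (D^s_i)^2$ and $(D^s)^2$ satisfy the same Schr\"odinger-Lichnerowicz identity; combined with the anticommutator computation this pins down $(D^s)^2 = \sum_i (D^s_i)^2$.

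For the commutativity statement $(D^s_i)^2 (D^s_j)^2 = (D^s_j)^2 (D^s_i)^2$ for $i\neq j$, the key input is again identity (ii) of Proposition \ref{partial-SL-1}: $D^s_i$ and $D^s_j$ anticommute. From $D^s_i D^s_j = -D^s_j D^s_i$ one computes
\[
(D^s_i)^2 (D^s_j)^2 = D^s_i (D^s_i D^s_j) D^s_j = -D^s_i (D^s_j D^s_i) D^s_j = -(D^s_i D^s_j)(D^s_i D^s_j) = (D^s_j D^s_i)(D^s_i D^s_j),
\]
and then pushing $D^s_j$ leftward through $D^s_i$ once more on each side gives $(D^s_j)^2 (D^s_i)^2$; more transparently, $(D^s_i)^2 (D^s_j)^2 = (D^s_i D^s_j)^2 = (D^s_j D^s_i)^2 \cdot(\pm1)$ — careful sign bookkeeping shows the two squares agree because each transposition contributes a sign $-1$ and there are an even number of them. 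I would simply write: since $D^s_i D^s_j = -D^s_j D^s_i$, squaring both sides of this identity and regrouping gives $(D^s_i D^s_j)^2 = (D^s_j D^s_i)^2$, i.e. $(D^s_i)^2(D^s_j)^2 = (D^s_j)^2(D^s_i)^2$ after re-pairing the factors using the anticommutation relation again.

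Given the commutativity of the family $\{(D^s_i)^2\}_{i=1}^k$ of self-adjoint operators (self-adjointness being inherited from that of $D^s_i$), they can be simultaneously diagonalized on $L^2(\Sigma M)$, and since their sum is $(D^s)^2$, the latter is diagonalized in the same basis as well — this is the "crucial consequence" announced before the lemma. There is essentially no obstacle here: the entire content is packaged in Proposition \ref{partial-SL-1}, and the only thing requiring a modicum of care is the sign bookkeeping in the second identity, which as noted above comes out right because each swap of $D^s_i$ past $D^s_j$ costs a factor $-1$ and an even number of such swaps is performed in passing from $(D^s_iD^s_j)^2$ to $(D^s_jD^s_i)^2$.
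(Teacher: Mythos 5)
Your proof is correct and is exactly the argument the paper intends (the lemma is stated there without proof, as an immediate consequence of Proposition~\ref{partial-SL-1}(ii)): the cross terms in $\bigl(\sum_i D^s_i\bigr)^2$ vanish by the anticommutation relation, and the squares commute because each factor is moved past the others an even number of times. The only blemish is the parenthetical claim $(D^s_i)^2(D^s_j)^2=(D^s_iD^s_j)^2$, which is off by a sign --- in fact $(D^s_iD^s_j)^2=-(D^s_i)^2(D^s_j)^2$ --- but your main displayed chain and the closing argument (square both sides of $D^s_iD^s_j=-D^s_jD^s_i$ and regroup) are both sound.
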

\noindent
In particular, any eigenvalue $\lambda$ of $(D^s)^2$ is the sum of 
eigenvalues $\lambda_i$ of $(D^s_i)^2$, 
$\displaystyle \lambda =\sum_{i=1}^k \lambda_i $.
%
\subsection*{Adapted Twistor Operator}\label{sec:twis}\noindent
%
%
We define a twistor operator $P^s:\ \Gamma(\Sigma M)\ra \Gamma (\T M ^{\ast}\otimes
\Sigma M)$ adapted to the splitting $\T M^n=\bigoplus_{i=1}^k \T_i$ 
of the tangent bundle by
\be\label{eq.adapted-twistor}
P^s\psi \ = \ \nabla^s \psi +
\sum_{i=1}^{k}\frac{1}{n_i}\sum_{l=1}^{n_i}e^i_l \otimes
e^i_l\cdot D^s_i\psi. 
\ee
By  a simple computation, one checks that
\bqr 
\|P^s\psi\|^2 \  = \ \langle(\Delta^s - \sum_{i=1}^k
                   \frac{1}{n_i}(D^s_i)^2)\psi, \psi\rangle.\label{twistor}\eqr
\begin{thm}[Twistorial eigenvalue estimate for products]%
\label{thm.twistorial-est-prod}
Assume that $M$ carries a geometry with reducible parallel torsion, and that
the dimensions of the subbundles $\T_i$ are ordered by ascending
dimensions, $n_1\leq n_2\leq \ldots\leq n_k$.
The smallest eigenvalue $\lambda$ of $\D^2$
satisfies the inequality
\bdm\tag{$*$}
\lambda \ \geq\ \frac{n_k}{4(n_k-1)}\Scal^g_{\min}+ \frac{n_k(n_k-5)}{8(n_k-3)^2}
\|T\|^2
+\frac{n_k(4-n_k)}{4(n_k-3)^2}\max(\mu_1^2,\ldots,\mu_k^2).
\edm
Let $\tilde{M}=M_1\x\ldots \x M_k$ be the universal cover of $M$, $\dim M_i=n_i$,
and $\tilde{s}:=\frac{n_k-1}{4(n_k-3)}$.
Equality holds in $(*)$ if and only if the following conditions are
satisfied:
\begin{enumerate}
\item The Riemannian scalar curvature of $(M,g)$ is constant,
\item There exists a twistor spinor with torsion for $\tilde{s}$
on $M_k$,
\item For $i=1,\ldots, k-1$:
\begin{enumerate}
\item If $n_i<n_k$: there exists  a $\nabla^{\tilde{s}}$-parallel
spinor on $M_i$, 
\item  If $n_i=n_k$: there exists  a $\nabla^{\tilde{s}}$-parallel or 
a twistor spinor with torsion for $\tilde{s}$ on $M_i$, 
\end{enumerate}
\item these spinors lie in the subbundle
$\Sigma_\mu(M_i)$ corresponding to the largest eigenvalue of $T^2_i$ ($i=1\ldots,k$).
\end{enumerate}
\end{thm}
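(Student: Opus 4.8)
The plan is to reproduce the proof of the twistorial integral formula, Theorem \ref{twistorial-int-FL}, almost line for line, but with the \emph{adapted} twistor operator $P^{s}$ of \eqref{eq.adapted-twistor} in place of the ordinary one and with the parameter chosen as $\tilde s:=\frac{n_k-1}{4(n_k-3)}$. The observation that makes this go through is that the fundamental calculation of Lemma \ref{lem.fund-calc} is a \emph{purely algebraic} identity in the operators $D^0$ and $T$, in which ``$n$'' is merely a free scalar; it may therefore be invoked with ``$n$'' replaced by the single integer $n_k$, even though $\dim M=\sum_i n_i$ is in general larger. For that value one has $\tilde s\,\frac{n_k-3}{n_k-1}=\frac14$, so Lemma \ref{lem.fund-calc} yields the identity
\[
(D^0+\tilde s\,T)^2-\tfrac1{n_k}(D^{\tilde s})^2\ =\ \tfrac{n_k-1}{n_k}\,\D^2-\tfrac{4\tilde s^{2}}{n_k-1}\,T^{2},
\]
where $D^{\tilde s}=D^0+3\tilde s\,T$ and $\D=D^0+\tfrac14T$ are the ordinary, non-split Dirac operators on $M$.

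\textbf{The integral estimate.} Starting from formula ($*$) of Theorem \ref{thm.gSLF} integrated against $\psi$, I would substitute the adapted-twistor identity \eqref{twistor}, $\langle\Delta^{\tilde s}\psi,\psi\rangle=\|P^{\tilde s}\psi\|^{2}+\sum_i\frac1{n_i}\langle(D^{\tilde s}_i)^{2}\psi,\psi\rangle$. Since $n_i\le n_k$ for all $i$ and each $(D^{\tilde s}_i)^{2}$ is non-negative, Lemma \ref{sum-Diracs} gives the operator inequality $\sum_i\frac1{n_i}(D^{\tilde s}_i)^{2}\ge\frac1{n_k}\sum_i(D^{\tilde s}_i)^{2}=\frac1{n_k}(D^{\tilde s})^{2}$. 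Feeding this in, moving the $\frac1{n_k}(D^{\tilde s})^{2}$ term across and using the algebraic identity above converts the left-hand side into $\frac{n_k-1}{n_k}\int_M\langle\D^{2}\psi,\psi\rangle\,dM-\frac{4\tilde s^{2}}{n_k-1}\int_M\langle T^{2}\psi,\psi\rangle\,dM$. Invoking $\nabla^c T=0$, which forces $\|T\|^{2}$ to be constant and $dT=-T^{2}+\|T\|^{2}$, and then carrying out the \emph{identical} rearrangement and coefficient bookkeeping as at the end of the proof of Theorem \ref{twistorial-int-FL} (with $n_k$ in the role of $n$) gives an integral inequality for $\int_M\langle\D^{2}\psi,\psi\rangle$. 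Taking $\psi$ to be a $\D^{2}$-eigenspinor lying in some $\Sigma_\mu$ (admissible since $\nabla^c T=0$ makes $\D^{2}$ commute with $T$), estimating $\Scal^g\ge\Scal^g_{\min}$, and replacing $\mu^{2}$ by $\max(\mu_1^{2},\dots,\mu_k^{2})$ — legitimate because the coefficient $\frac{n_k(4-n_k)}{4(n_k-3)^{2}}$ is $\le0$ for $n_k\ge4$ — produces exactly $(*)$.

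\textbf{Equality case.} One traces where each inequality above is sharp. Equality in $(*)$ forces: the scalar curvature is constant (from $\Scal^g\ge\Scal^g_{\min}$); $P^{\tilde s}\psi=0$; and $\sum_i(\frac1{n_i}-\frac1{n_k})\langle(D^{\tilde s}_i)^{2}\psi,\psi\rangle=0$, hence $D^{\tilde s}_i\psi=0$ for every $i$ with $n_i<n_k$. Because $\TM^{*}=\bigoplus_i\T_i^{*}$ the adapted twistor operator splits blockwise, so $P^{\tilde s}\psi=0$ is equivalent to the $k$ equations $\nabla^{\tilde s}_X\psi+\frac1{n_i}X\cdot D^{\tilde s}_i\psi=0$ for $X\in\T_i$. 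A block with $n_i<n_k$ then also satisfies $D^{\tilde s}_i\psi=0$, whence $\nabla^{\tilde s}_X\psi=0$ for all $X\in\T_i$, i.e.\ $\psi$ is $\nabla^{\tilde s}$-parallel on $M_i$; a block with $n_i=n_k$ has $\tilde s=\frac{n_i-1}{4(n_i-3)}$, so its equation is precisely the twistor equation with torsion for $\tilde s$ on $M_i$ (of which $\nabla^{\tilde s}$-parallelism is the degenerate instance). Finally, passing to the de Rham splitting $\tilde M=M_1\times\dots\times M_k$ from Definition \ref{dfn.geom-redpargeom}, on which $T^{2}=\sum_i T_i^{2}$ with the $T_i^{2}$ acting on the respective factors, an eigenvalue $\mu^{2}$ of $T^{2}$ is a sum of eigenvalues of the $T_i^{2}$, so sharpness of the step $\mu^{2}\mapsto\max(\mu_1^{2},\dots,\mu_k^{2})$ means each factor-component of $\psi$ lies in the maximal eigenbundle of its $T_i^{2}$. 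The converse — that these conditions force equality throughout — is obtained by running the chain backwards, exactly as for Corollary \ref{cor.twistorial-est}.

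\textbf{Main obstacle.} The one genuinely new point, and the only delicate one, is the realization above: that Lemma \ref{lem.fund-calc} may be applied with ``dimension'' $n_k$ rather than $\dim M$, and that the defect term $\sum_i\frac1{n_i}(D^{s}_i)^{2}$ of the adapted twistor operator is precisely engineered so that $\sum_i\frac1{n_i}(D^{\tilde s}_i)^{2}\ge\frac1{n_k}(D^{\tilde s})^{2}$ slots into that identity with the correct sign. After this, the global estimate is the template of Theorem \ref{twistorial-int-FL}; the equality analysis is bookkeeping with the partial operators of Section \ref{sec.KTS} and Proposition \ref{partial-SL-1}, its one subtlety being that $\frac1{n_i}-\frac1{n_k}$ vanishes exactly for the top-dimensional blocks, which is what separates the twistor condition there from the stronger parallelism condition imposed on the lower-dimensional ones.
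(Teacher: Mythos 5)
Your proposal is correct and follows essentially the same route as the paper's proof: the same use of the adapted twistor identity \eqref{twistor}, the same application of Lemma \ref{lem.fund-calc} with $n_k$ in place of $n$ so that $\tilde s\,\frac{n_k-3}{n_k-1}=\frac14$, the same splitting $\sum_i\frac1{n_i}(D^{\tilde s}_i)^2-\frac1{n_k}(D^{\tilde s})^2=\sum_{i<k}\bigl(\frac1{n_i}-\frac1{n_k}\bigr)(D^{\tilde s}_i)^2\ge 0$ via Lemma \ref{sum-Diracs}, and the same blockwise analysis of the equality case with pullback to the de Rham factors.
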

\begin{proof}
From Theorem \ref{thm.gSLF}, we know that
\bdm
(D^{s/3})^2\ =\ \Delta^s - s T^2+\frac{1}{4}\Scal^g+
(s-2s^2)\|T\|^2.
\edm
By integrating, we conclude together with equation (\ref{twistor})
 \bqrs \int\langle(D^{s/3})^2\psi, \psi\rangle\rangle
  &=&\int\langle\Delta^s \psi, \psi\rangle
      -s\int\langle T^2 \psi, \psi\rangle
      +\frac{1}{4}\int\langle \Scalg \psi, \psi\rangle
      +(s-2s^2)\int\|T\|^2|
      \|\psi\|^2 \\
  &=&\int\|P^s\psi\|^2 +
  \sum_{i=1}^k\int\frac{1}{n_i}\|D^s_i\psi\|^2\\
   &&\hspace*{1cm}   -s\int\langle T^2 \psi, \psi\rangle
      +\frac{1}{4}\int\langle \Scalg \psi, \psi\rangle
      +(s-2s^2)\int\|T\|^2|
      \|\psi\|^2.
\eqrs
Adding $-\int\langle\frac{1}{n_k}(D^s)^2\psi,\psi\rangle$ on 
both sides
\bea[*]
\int\langle\left[(D^{s/3})^2 -
\frac{1}{n_k}(D^s)^2\right] \psi, \psi\rangle
  &=&\int\|P^s\psi\|^2
       +\int\langle\left[\sum_{i=1}^k\frac{1}{n_i}(D^s_i)^2 -
       \frac{1}{n_k}(D^s)^2\right]\psi,\psi\rangle\\
   &&    -s\int\langle T^s \psi, \psi\rangle
      +\frac{1}{4}\int\langle \Scalg \psi, \psi\rangle
      +(s-2s^2)\int\|T\|^2|
      \|\psi\|^2.
\eea[*]
Using the same techniques as in Lemma \ref{lem.fund-calc} and
Theorem \ref{twistorial-int-FL} we have
\[(D^{s/3})^2 -
\frac{1}{n_k}(D^s)^2 = \frac{n_k - 1}{n_k}(\D)^2 +
\frac{4s^2}{1-n_k}T^2\quad \text{with}\,\,s = \frac{n_k -1}{4(n_k
-3)}.\]
We also note from Lemma \ref{sum-Diracs} that
$\displaystyle(D^s)^2 = \sum_{i=1}^k (D^s_i)^2$; hence,
if we set  $\tilde{s} := \frac{n_k -1}{4(n_k -3)}$, the above formula implies
\begin{gather*}
\frac{n_k-1}{n_k}\int\langle(\D)^2\psi, \psi\rangle =
\int\|P^s\psi\|^2
       +\int\langle\left[\sum_{i=1}^{k-1}
       \left(\frac{1}{n_i}-\frac{1}{n_k}\right)(D^s_i)^2\right]\psi,\psi\rangle\\
 + \frac{(1-n_k)(n_k -4)}{4(n_k -3)^2}\int\langle T^2 \psi, \psi\rangle
      +\frac{1}{4}\int\langle \Scalg \psi, \psi\rangle
      +\frac{(n_k -1)(n_k - 5)}{8(n_k - 3)^2}\int\|T\|^2|
      \|\psi\|^2.
\end{gather*}
Since the first two terms on the right hand side of the first line are
clearly $\geq 0$, the estimate follows.
The equality case is obtained if the scalar curvature is constant,
the eigenvalue $\mu^2$ of $T^2$ is maximal and
\bdm
P^{\tilde s}\psi\ =\ 0,\quad \text{and}\  (D^{\tilde s}_i)^2 \psi\ =\ 0 \text{ if }
n_i< n_k. 
\edm
By the splitting of the tangent bundle, $P^{\tilde s}\psi=0$ is equivalent
to
\bdm
\nabla^{\tilde{s}, i} \psi +\frac{1}{n_i}\sum_{l=1}^{n_i}e^i_l \otimes
e^i_l\cdot D^{\tilde s}_i\psi \ = \ 0  \quad \forall i=1,\ldots, k. 
\edm
If $D^{\tilde s}_i\psi =0$ (for example, if $n_i< n_k$), this implies
$\nabla^{\tilde{s}, i} \psi=0$. If not, $n_i=n_k$ has to hold and
\bdm
\nabla^{\tilde{s}, i}_X \psi +\frac{1}{n_i}p_i(X)\cdot D^{\tilde s}_i\psi \ = \ 0,
\edm
where we recall that $p_i$ denoted the projection $\T M \ra \T_i$.
In order to obtain the corresponding spinor fields on the factors $M_i$,
one argues as in \cite{Alexandrov07}: Let $f_i:\ M_i\ra \tilde{M}$ be the inclusion,
$q:\ \tilde{M}\ra M$ the projection. The pullback $(q\circ f_i)^* \Sigma M$ is a
Clifford bundle over $M_i$, hence it is a sum of finitely many copies of $\Sigma M_i$, and the connection $\nabla^s$ with torsion
$4sT$  pulls back to the connection with torsion $4sT_i$. The pullbacks of
the spinor fields then satisfy the corresponding field equations.
For the last claim, observe that $T^2=\sum_{i=1}^k T_i^2$ as observed in
Remark \ref{rem.EVs-T2}, hence the maximum is reached if and only if
the eivenvalue of each single $T_i^2$ is maximal.
\end{proof}
\noindent
\begin{NB}
The coefficient of the scalar curvature is strictly  decreasing
with growing $n$, while the other two coefficients (of $\|T\|^2$ and
$\max(\mu_1^2,\ldots, \mu^2_k)$) 
are first increasing (for $n\geq 5$ resp.~$n\geq 6$), then decreasing
for larger $n$ ($n\gtrsim 15$)  and reach quickly their 
limits $1/8$ and $-1/4$.  Thus, this estimate becomes better for products 
where the scalar curvature dominates the other two terms. We believe
that the estimate will be particularly useful if $M$ is locally  a product
of manifolds of the same dimension (though not necessarily carrying the same
geometry), because it has a `nicer' equality case. 
\end{NB}
\begin{NB}
Theorem \ref{thm.twistorial-est-prod} generalizes the main result
by E.\,C.~Kim and B.~Alexandrov (\cite{ECKim04}, \cite{Alexandrov07})
for the Riemannian Dirac operator on locally reducible Riemannian manifolds.
They proved that the first eigenvalue $\lambda^g$ of the square of the
Riemannian Dirac operator $(D^g)^2$ satisfies
\be\label{eq.bogdan-estimate}
\lambda^g \ \geq\ \frac{n_k}{4(n_k-1)}\Scal^g_{\min},
\ee
again for ascending dimensions of the single factors, 
and obtained the corresponding equality case.
\end{NB}
\begin{exa}
Consider a $10$-dimensional manifold that is a product of two $5$-dimensional
manifolds with parallel characteristic torsion; this defines a geometry with reducible parallel torsion. Then the $10$-dimensional twistorial estimate
of the eigenvalue of $\D^2$ reads (Corollary \ref{cor.twistorial-est})
\bdm
\lambda \ \geq\ \frac{5}{18}\Scalg_{\min} + \frac{25}{196} \|T\|^2 - \frac{15}{49}
\max (\mu^2),
\edm
where $\max (\mu^2)$ denotes the maximal eigenvalue of $T^2$. On the other side,
the twistorial eigenvalue estimate for products (Theorem \ref{thm.twistorial-est-prod}) yields
\bdm
\lambda \ \geq\ \frac{5}{4}\Scalg_{\min} - \frac{5}{16}\max (\mu^2).
\edm
One recognizes that one gets a truly different estimate that will be of interest
for large scalar curvatures.
\end{exa}
\subsection*{More comments on eigenvalues for product manifolds}
We shall now explain how  the proof of the main  Theorem in \cite{Alexandrov07}
can easily be modified to obtain a different kind of Riemannian estimate,
which we find of interest on its own. It is plain that their 
main result (equation 
($\ref{eq.bogdan-estimate}$)) and our Theorem \ref{thm.twistorial-est-prod}
contain a built-in asymmetry: The top dimension $n_k$
plays a particular role in the estimate and the equality case predicts a
twistor spinor on the $n_k$-dimensional factor and a parallel spinor on all
other factors. We asked ourselves: Is there an estimate (in the Riemannian
case resp.~torsion case) that is symmetric in all dimensions $n_i$ and that
reaches equality if and only if there exists a twistor spinor on each factor?

Let's recall the global setting for the Riemannian case: 
Consider a compact spin manifold $M$
whose tangent bundle $\T M$ splits into parallel, pairwise orthogonal
distributions $\T_i, i=1,\ldots, k$, and where we assume that
the dimensions are ordered by $1\leq n_1\leq\ldots\leq n_k$.
Then the partial Levi-Civita derivatives $\nabla^{g,i}$, the partial
Riemannian Dirac operators $D^g_i$, partial Riemannian curvatures etc.~are
defined as  we did before, but without any torsion appearing. 
In particular, it is true that a $D^g$ eigenspinor can be chosen in
such a way that it is also a $D^g_i$ eigenspinor for all $i$, and the
eigenvalues of the squares of these operators satisfy 
$\lambda^g=\lambda^g_1+\ldots +\lambda^g_k$. Since the manifold is locally
a product, the partial eigenvalues $\lambda^g_i$ have really a geometric meaning
of their own. 
\begin{thm}\label{thm.improvement-BK}
The eigenvalues $\lambda^g$ of $(D^g)^2$ and $\lambda^g_i$ of $(D^g_i)^2$ satisfy
the inequality
\bdm
\lambda^g - \sum_{i=1}^k \frac{\lambda^g_i}{n_i} \  \geq\ \frac{\Scalg_{\min}}{4},
\edm
and equality is obtained if and only if there exists a Riemannian 
Killing spinor on each factor of $M$.
\end{thm}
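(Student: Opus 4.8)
The plan is to imitate the single-factor proof (Theorem~\ref{twistorial-int-FL} and Corollary~\ref{cor.twistorial-est}), but applied \emph{separately on each parallel factor} rather than only to the largest one. The key observation is that, by the splitting of the tangent bundle and Proposition~\ref{partial-SL-1} specialised to $T=0$ (or by the classical Riemannian Schr\"odinger--Lichnerowicz formula for each factor), one has for every $i$ the partial identity $(D^g_i)^2 = \Delta^g_i + \tfrac14\Scal^g_i$. Moreover, the adapted twistor operator from \eqref{eq.adapted-twistor} (with $s=0$, so $P^g=P^0$) satisfies, by \eqref{twistor},
\[
\|P^g\psi\|^2 \ =\ \Big\langle \Big(\Delta^g - \sum_{i=1}^k \frac{1}{n_i}(D^g_i)^2\Big)\psi,\psi\Big\rangle \ \geq\ 0 .
\]

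First I would integrate this inequality over the compact manifold $M$, using $\Delta^g = \sum_i \Delta^g_i$ (the $s=0$ case of the decomposition in Proposition~\ref{partial-SL-1}) and the partial SL-formulas to replace each $\Delta^g_i$ by $(D^g_i)^2 - \tfrac14\Scal^g_i$. This gives
\[
0\ \leq\ \int_M \|P^g\psi\|^2 dM \ =\ \int_M \Big\langle \sum_{i=1}^k\Big(1-\frac{1}{n_i}\Big)(D^g_i)^2\psi,\psi\Big\rangle dM \ -\ \frac14\int_M \Scal^g\,\|\psi\|^2 dM ,
\]
since $\sum_i \Scal^g_i = \Scal^g$ by Proposition~\ref{curv-splitting}. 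Now let $\psi$ be an eigenspinor of $(D^g)^2$ for the smallest eigenvalue $\lambda^g$; by Lemma~\ref{sum-Diracs} (its $s=0$ instance) it may be chosen to be simultaneously an eigenspinor of each $(D^g_i)^2$ with eigenvalue $\lambda^g_i$, and $\lambda^g = \sum_i \lambda^g_i$. Substituting $\langle (D^g_i)^2\psi,\psi\rangle = \lambda^g_i\|\psi\|^2$ and using $\Scal^g \geq \Scalg_{\min}$, the inequality becomes
\[
0\ \leq\ \sum_{i=1}^k\Big(1-\frac{1}{n_i}\Big)\lambda^g_i \ -\ \frac{\Scalg_{\min}}{4} \ =\ \lambda^g - \sum_{i=1}^k \frac{\lambda^g_i}{n_i} - \frac{\Scalg_{\min}}{4},
\]
which is exactly the claimed estimate.

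For the equality case: equality forces $P^g\psi = 0$ and $\Scal^g \equiv \Scalg_{\min}$ (constant). By the splitting of the tangent bundle, $P^g\psi=0$ decomposes as $\nabla^{g,i}_X\psi + \tfrac{1}{n_i}p_i(X)\cdot D^g_i\psi = 0$ for all $i$ and all $X$, i.e.\ $\psi$ is a partial twistor spinor on each factor. Pulling back to the factors $M_i$ of the universal cover (exactly as in the argument at the end of the proof of Theorem~\ref{thm.twistorial-est-prod}, following \cite{Alexandrov07}), one gets a Riemannian twistor spinor on each $M_i$; conversely each factor being compact with constant scalar curvature, a twistor spinor whose Dirac eigenvalue realises $\lambda^g_i = \tfrac{n_i}{4(n_i-1)}\Scal^g_i$ is a Riemannian Killing spinor on $M_i$ by the classical argument (\cite{Friedrich80}, \cite{BFGK}). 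One must check that constancy of the \emph{total} scalar curvature together with $P^g\psi=0$ indeed forces each factor to carry a genuine Killing (not merely twistor) spinor; this is where a small amount of care is needed, and I expect it to be the main obstacle — one uses that on a compact factor a twistor spinor is an eigenspinor of the Dirac operator precisely when $\Scal^g_i$ is constant, and that the minimality of $\lambda^g$ together with the estimate $\lambda^g_i \geq \tfrac{n_i}{4(n_i-1)}\Scal^g_i$ on each factor (Friedrich's inequality) pins down $\lambda^g_i$ to its extremal value on every $i$ simultaneously. The converse direction is then immediate: if each factor carries a Killing spinor, their product (more precisely, the appropriate tensor/sum on $\Sigma M$) is a common eigenspinor saturating all the partial Friedrich inequalities, hence saturating the displayed inequality. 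Finally, I would remark — as the paper already anticipates in its closing sentence — that this argument has no torsion analogue, because for $s\neq 0$ the partial operators $(D^{s/3}_i)^2$ do not combine into $(D^{s/3})^2$ in the way needed (the torsion rescaling $s\mapsto s/3$ is incompatible with the straightforward summation $\sum_i (D^s_i)^2 = (D^s)^2$ of Lemma~\ref{sum-Diracs}).
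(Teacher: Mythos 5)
Your argument is correct and is essentially the paper's own proof: the identity you derive from the partial Schr\"odinger--Lichnerowicz formulas, $\|P\psi\|^2=\sum_i(1-\tfrac1{n_i})\langle (D^g_i)^2\psi,\psi\rangle-\tfrac14\Scalg\|\psi\|^2$, is exactly Alexandrov's integral formula (which the paper quotes rather than re-derives), rewritten via $(D^g)^2=\sum_i(D^g_i)^2$. The remaining steps --- choosing a simultaneous eigenspinor, using $\|P\psi\|^2\ge 0$, and deferring the equality discussion to the arguments of \cite{Alexandrov07} --- match the paper's treatment.
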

\begin{proof}
For the adapted Riemannian twistor  
operator $P^0=:P$ (compare equation (\ref{eq.adapted-twistor})) and an 
arbitrary spinor field $\psi$, B.~Alexandrov
proves in \cite{Alexandrov07} the integral formula
\bdm
\|P\psi\|^2\ =\ \left[1-\frac{1}{n_k}\right] \langle (D^g)^2\psi, \psi\rangle
-\sum_{i=1}^{k-1} \left[\frac{1}{n_i}-\frac{1}{n_k}\right]
\langle (D^g_i)^2\psi, \psi\rangle -\langle \frac{\Scalg}{4} \psi,\psi\rangle.
\edm
In order to obtain a result (our equation (\ref{eq.bogdan-estimate})) in
which  the partial eigenvalues $\lambda_i$ do not appear anymore,
he observes that $1/n_i-1/n_k\geq 0$ by assumption and  neglects these terms
together with the twistor term $\|P\psi\|^2$. For proving our Theorem, the
main point is to keep these terms! Choose an eigenspinor $\psi$ 
such that $(D^g)^2\psi=\lambda^g\psi,\ (D^g_i)^2\psi=\lambda^g_i\psi$ (this is
always possible). Since $\|P\psi\|^2\geq 0$, we obtain
\bdm
0 \ \leq\ \left[1-\frac{1}{n_k}\right] \lambda^g \|\psi\|^2 -
\sum_{i=1}^{k-1} \left[\frac{1}{n_i}-\frac{1}{n_k}\right]\lambda^g_i \|\psi\|^2
-\langle \frac{\Scalg}{4} \psi,\psi\rangle.
\edm
Estimating the scalar curvature as usual by its minimum and
and dividing by the length $\|\psi\|^2>0$ yields then the result after a
short computation, which we omit. The discussion of the equality case
follows
the same line of arguments as in \cite{Alexandrov07}.
\end{proof}
\noindent
Let us discuss the value of this result. On every single factor,
Friedrich's classical estimate
\bdm
\lambda^g_i\ \geq\ \frac{n_i}{4(n_i-1)}(\Scal^g_i)_{\min}
\edm
holds, so by summation (and a quick calculation) we get
\bdm
\lambda^g - \sum_{i=1}^k \frac{\lambda^g_i}{n_i} \  \geq\ \frac{1}{4}
\sum_{i=1}^k (\Scal^g_i)_{\min}.
\edm
Since in general $\sum (\Scal^g_i)_{\min}\leq (\sum \Scal^g_i)_{\min}$, our
result is non trivial. This is particularly plain when only 
$\Scal^g_{\min}$ is strictly positive, but not all
$(\Scal^g_i)_{\min}$. From an aesthetic point of view, Theorem 
\ref{thm.improvement-BK} removes the asymmetry of equation 
($\ref{eq.bogdan-estimate}$) and Theorem \ref{thm.twistorial-est-prod},
as desired. 

Unfortunately, Theorem \ref{thm.improvement-BK} has no reasonable
analogue for Dirac operators with torsion. In oder to obtain a similar
result, one is lead to allow the parameter $s$ of the adapted twistor operator
to change in each summannd $\T_i$. The resulting inequality then links
the eigenvalues of $\D^2$ and $(D_i^{s_i})$, where now $s_i= (n_i-1)/4(n_i-3)$.
However, these operators do not have an intrinsic geometric meaning, nor
do they commute with  $\D^2$. Hence, the result is not of interest.

\appendix\section{Proof and application of the integrability condition}
\label{app.int-cond}\noindent
%
We begin with a remarkable identity that relates the curvature operator and
the Ricci operator in the spin bundle. Recall that the curvature operator
of any spin connection can be understood as a endomorphism-valued $2$-form,
\bdm
\mathcal{R}(X,Y)\psi \ =\ \nabla_X\nabla_Y \psi - \nabla_Y\nabla_X\psi -
\nabla_{[X,Y]}\psi.
\edm
One checks that it is related to the curvature operator on $2$-forms defined through
\bdm
\mathcal{R}(e_i\wedge e_j)\ :=\ \sum_{k<l} R_{ijkl}  \, e_k\wedge e_l
\edm
by the relation
\bdm
\mathcal{R}(X,Y)\psi \ =\ \frac{1}{2}\, \mathcal{R}(X\wedge Y)\cdot\psi.
\edm
Furthermore, we understand the Ricci tensor as an endomorphism on the tangent
bundle. Then the identity stated in the following theorem is crucial for deriving
integrability conditions. It generalizes a well-known
result of Friedrich \cite{Friedrich80} for the Levi-Civita connection ($T=0$).
A special case of the result--that is, applied to a \emph{parallel} spinor--may
be found in \cite{Friedrich&I1}. The result appeared for the first time in
the diploma thesis of Mario Kassuba \cite{Kassuba06}, which was written under
the supervision of the first author and Thomas Friedrich at
Humboldt University Berlin in 2006.
\begin{thm}\label{thm.Ric-id}
%
Let $\nabla^c$ be a metric spin connection with parallel torsion $T$, $\nabla^c T=0$. Then, the following identity holds for any spinor field $\psi$ and any vector field
$X$
\bdm
{\Ric}^c(X)\cdot\psi \ =\ -2\sum_{k=1}^{n}e_k
 \mathcal{R}^c(X,e_k)\psi +\frac{1}{2}X\haken dT\cdot\psi.
\edm
\end{thm}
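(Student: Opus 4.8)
The plan is to reduce the claim to the corresponding Riemannian identity, i.e.\ the case $T=0$, which is due to Friedrich, and then to keep careful track of all the torsion correction terms. First I would write the spin curvature of $\nabla^c$ in terms of the Levi-Civita spin curvature $\mathcal{R}^g$. Since $\nabla^c_X\psi=\nabla^g_X\psi+\tfrac14(X\haken T)\cdot\psi$, differentiating twice and antisymmetrising gives
\[
\mathcal{R}^c(X,Y)\psi \ =\ \mathcal{R}^g(X,Y)\psi+\tfrac14\big((\nabla^g_XT)(Y)-(\nabla^g_YT)(X)\big)\cdot\psi+\tfrac1{16}\big[(X\haken T)\cdot(Y\haken T)-(Y\haken T)\cdot(X\haken T)\big]\cdot\psi,
\]
where I abbreviate $(X\haken T)$ for the $2$-form obtained by contracting $T$ with $X$. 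The hypothesis $\nabla^cT=0$ lets me replace the covariant derivatives of $T$ by algebraic expressions: $\nabla^g_XT=-\tfrac14[(X\haken T),\,T]$ acting suitably, so the first-order term becomes another purely algebraic quantity in $T$ and $X,Y$.

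Next I would plug this decomposition into the Clifford-contracted sum $\sum_k e_k\cdot\mathcal{R}^c(X,e_k)\psi$. For the Levi-Civita part, Friedrich's classical identity gives $\sum_k e_k\cdot\mathcal{R}^g(X,e_k)\psi=-\tfrac12\Ric^g(X)\cdot\psi$. So the whole problem is to show that the torsion corrections reorganise into exactly $-\tfrac12\Ric^c(X)\cdot\psi+\tfrac12\Ric^g(X)\cdot\psi-\tfrac14 X\haken dT\cdot\psi$, using the known formula relating $\Ric^c$ and $\Ric^g$ for a metric connection with skew torsion, namely $\Ric^c(X,Y)=\Ric^g(X,Y)-\tfrac14\sum_k g(T(X,e_k),T(Y,e_k))-\tfrac12(\delta T)(X,Y)$, which simplifies here because $\nabla^cT=0$ forces $\delta T=0$. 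This is where the bulk of the work lies: it is a purely algebraic identity in the Clifford algebra, and one has to expand products of the $2$-forms $e_k\haken T$, use the Clifford relations to convert $e_k\cdot(e_k\haken T)$-type terms, and collect the pieces of degree $1$ and degree $3$ separately. The degree-$1$ part must reproduce the Ricci curvatures; the degree-$3$ part must assemble to $\tfrac12 X\haken dT$ — and here I would use the standard identity $dT=2\sigma_T-(\text{something})$, or more directly the fact that for parallel torsion $dT(X,Y,Z,W)$ can be written in terms of $\sum_k g(T(X,Y),T(Z,e_k))\cdots$ type contractions (this is essentially the first Bianchi identity for $\nabla^c$, cf.\ Theorem \ref{FI}(1) and the appendix formulas).

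The main obstacle I anticipate is the bookkeeping in the quadratic-in-$T$ term: expanding $\sum_k e_k\cdot\big[(X\haken T)\cdot(e_k\haken T)-(e_k\haken T)\cdot(X\haken T)\big]\cdot\psi$ in an orthonormal frame produces a sum over triples of indices, and one must carefully separate the contributions where indices coincide (these feed the Ricci terms) from those where they are distinct (these feed the $3$-form $dT$). Getting the numerical coefficients to match — in particular the $\tfrac12$ in front of $X\haken dT$ and the precise form of the $\sum_k g(T(X,e_k),T(\cdot,e_k))$ contraction that turns $\Ric^g$ into $\Ric^c$ — is the delicate step. I would organise this by introducing the abbreviation $T_X:=X\haken T\in\Lambda^2$ and systematically using $e_k\cdot T_X=e_k\wedge T_X-e_k\haken T_X$ together with $\sum_k e_k\cdot(e_k\haken \omega)=p\,\omega$ for a $p$-form $\omega$, so that the degree-raising and degree-lowering parts of the contraction can be handled in turn. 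Once the algebra is arranged this way, the identity should fall out by comparing coefficients, with the parallelism $\nabla^cT=0$ used exactly once more to discard $\delta T$ and to identify $dT$ with the algebraic expression appearing.
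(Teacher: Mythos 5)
Your route --- writing $\mathcal{R}^c=\mathcal{R}^g+(\text{torsion corrections})$, invoking Friedrich's Riemannian identity $\sum_k e_k\cdot\mathcal{R}^g(X,e_k)\psi=-\tfrac12\Ric^g(X)\cdot\psi$, and then showing the corrections reassemble into $\Ric^c-\Ric^g$ plus the $dT$ term --- is genuinely different from the paper's argument and is not wrong in principle. But the paper's proof shows that the detour through the Levi-Civita connection is unnecessary and obscures where the hypothesis is used. One simply writes $\sum_k e_k\,\mathcal{R}^c(X,e_k)=\tfrac12\sum_k\sum_{i<j}R^c_{Xkij}\,e_ke_ie_j$ and splits the triple Clifford products by index coincidences: the part with three distinct indices is a cyclic sum, which the first Bianchi identity for a metric connection with \emph{parallel} skew torsion, $\mathfrak{S}_{X,Y,Z}\,\mathcal{R}^c(X,Y,Z,V)=\tfrac12\,dT(X,Y,Z,V)$ (Theorem \ref{thm.curv-splitting}), converts directly into a multiple of $X\haken dT$; the part with a repeated index is, by metricity of $\nabla^c$ alone, the contraction $-\tfrac12\Ric^c(X)$. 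Thus $\nabla^cT=0$ enters exactly once, through the Bianchi identity, and neither $\Ric^g$ nor the relation $\Ric^c=\Ric^g-\tfrac14\sum_k g(T(\cdot,e_k),T(\cdot,e_k))-\tfrac12\delta T$ is ever needed.

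The weakness of your proposal as it stands is that the entire content of the theorem is deferred to the Clifford-algebra bookkeeping you describe as ``the delicate step'' and do not execute; until the degree-one and degree-three coefficients are actually matched, this is a plan rather than a proof. Be aware that your version needs three auxiliary inputs, each with its own sign and normalization conventions: the Riemannian identity, the $\Ric^c$--$\Ric^g$ relation, and the precise formula $\nabla^g_XT(U,V,W)=-\tfrac12\sigma_T(U,V,W,X)$ (your ``$-\tfrac14[(X\haken T),T]$ acting suitably'' is this in disguise, and its normalization must be pinned down). Moreover, the quadratic term $\tfrac1{16}\sum_k e_k\bigl[(X\haken T)(e_k\haken T)-(e_k\haken T)(X\haken T)\bigr]$ alone requires essentially the full list of contraction identities of Lemma \ref{lem.compilation}; a computation of exactly this type is carried out in the proof of Theorem \ref{thm.int-cond-KS} and occupies a page. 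So the approach can be completed, but it trades a three-line argument for a substantial calculation whose outcome you have not verified.
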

\begin{proof}
Rewrite the first term on the right hand side (without the numerical factor) as
\bdm
\sum_{k=1}^n e_k \mathcal{R}^c(e_l,e_k)\ =\ \frac{1}{2}\sum_{k=1}^n e_k\cdot
\mathcal{R}^c(e_l\wedge e_k)\ =\ \frac{1}{2}\sum_{k=1}^n \sum_{i<j} R^c_{lkij}
e_k e_i e_j \ =:\ R_1 + R_2,
\edm
where $R_1$ denotes all terms with three different indices $k,i,j$, and $R_2$
all terms with at least one repeated index. We first discuss $R_1$:
\bdm
R_1 \ =\ \frac{1}{2} \sum_{i<j} \left[\sum_{k<i}R^c_{lkij}e_k e_i e_j+
\sum_{i<k<j}R^c_{lkij}e_k e_i e_j+ \sum_{j<k}R^c_{lkij}e_k e_i e_j \right]\ =\
\frac{1}{2} \sum_{k<i<j} \stackrel{k,i,j}{\mathfrak{S}} \, R^c_{lkij} e_k e_i e_j ,
\edm
where the symbol $\mathfrak{S}$ denotes the cyclic sum. The first Bianchi identity
for a metric connection with parallel skew torsion \cite{Friedrich&I1},
\cite{Agricola06}
\bdm
\stackrel{X,Y,Z}{\mathfrak{S}} \mathcal{R}(X,Y,Z,V)\  = \ \frac{1}{2} dT(X,Y,Z,V)
\edm
implies then $R_1= -  e_l\haken dT/4 $. We now consider $R_2$. Here, the argument
does not depend on the detailed type of the connection, only the property of being
metric is used (it implies that $R^c_{ikjl}$ is antisymmetric in the third and fourth
argument, see \cite[Section 2.8]{Agricola06}). One checks that
\bdm
R_2 \ =\ -\frac{1}{2} \sum_{r=1}^n \left[ \sum_{p=1}^{r-1} R^c_{lppr}e_e+
\sum_{q=r+1}^n R^c_{lqqr}e_r\right].
\edm
But since the Ricci tensor is exactly the contraction of the curvature,
$R_2= - \Ric^c(e_l)/2$. This ends the proof.
\end{proof}
We use this result to formulate the necessary curvature
integrability conditions for Killing spinors with torsion.
For Riemannian Killing spinors ($T=0$), this results just means
that the underlying manifold has to be Einstein.
\begin{thm}\label{thm.int-cond-KS}
%
Suppose $\nabla^c T=0$. Let $\psi$ be a Killing spinor with torsion
with Killing number $\kappa$,   set $\lambda:=\frac{1}{2(n-3)}$
for convenience, and recall that $s=\frac{n-1}{4(n-3)}$.
Then the Ricci curvature of the characteristic connection satisfies
for all vector fields $X$ the identity
\bea[*]
{\Ric}^c(X)\psi&=&-16s\kappa(X\haken
T)\psi+4(n-1)\kappa^2X\psi+(1-12\lambda^2)(X\haken\sigma_T)\psi+\\
                        & &+2(2\lambda^2+\lambda)\sum e_k(T(X,e_k)\haken T)\psi \,.
\eea[*]
\end{thm}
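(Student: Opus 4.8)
The plan is to combine the algebraic Ricci identity of Theorem~\ref{thm.Ric-id} with the twice‑differentiated Killing equation. First I would rewrite the defining equation $\nabla^s_X\psi=\kappa\,X\cdot\psi$ (for $s=\frac{n-1}{4(n-3)}$) in terms of the characteristic connection: by the identity $\nabla^s_X\psi=\nabla^c_X\psi+\lambda\,(X\haken T)\cdot\psi$ recalled in the proof of Lemma~\ref{lem.twistorglg}, with $\lambda=\frac{1}{2(n-3)}$, the Killing equation is equivalent to
\[
\nabla^c_X\psi\ =\ A(X)\cdot\psi,\qquad A(X):=\kappa\,X-\lambda\,(X\haken T),
\]
where $A(X)$ acts by Clifford multiplication and depends $\R$‑linearly on the vector field $X$. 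So a Killing spinor with torsion is nothing but a generalized Killing spinor for $\nabla^c$, and the whole proof is an integrability computation for this equation.

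Next I would compute the curvature $\mathcal{R}^c$ from this. Fixing a point $p$ and an orthonormal frame with $(\nabla^c e_i)_p=0$, the torsion of $\nabla^c$ being $T$ forces $[e_i,e_j]_p=-T(e_i,e_j,-)$; since $\nabla^cT=0$, differentiating the equation of the first step once more yields, at $p$,
\[
\mathcal{R}^c(e_i,e_j)\,\psi\ =\ -\big[A(e_i),A(e_j)\big]\cdot\psi+A\big(T(e_i,e_j,-)\big)\cdot\psi .
\]
I would then insert this into Theorem~\ref{thm.Ric-id}, $\Ric^c(e_i)\psi=-2\sum_k e_k\cdot\mathcal{R}^c(e_i,e_k)\psi+\tfrac12\,(e_i\haken dT)\cdot\psi$, and use $dT=2\sigma_T$ (a consequence of $\nabla^cT=0$, as in the proof of Theorem~\ref{thm.gSLF}) to turn the last summand into $(e_i\haken\sigma_T)\cdot\psi$.

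The bulk of the work is then to expand everything through $A(\cdot)=\kappa(\cdot)-\lambda((\cdot)\haken T)$ and to collect terms according to their total degree in $\kappa$. The $\kappa^2$‑part comes only from $-2\sum_k e_k\cdot(-\kappa^2[e_i,e_k])\psi$ and, via the classical identity $\sum_{k\neq i}e_ke_ie_k=(n-1)e_i$, reproduces $4(n-1)\kappa^2\,X\psi$, exactly as in Friedrich's Riemannian computation. The $\kappa$‑linear part gathers the contribution $-4\kappa\,(X\haken T)\psi$ of the $\kappa$‑piece of $A(T(e_i,e_k,-))$ (using $\sum_k e_k\cdot T(e_i,e_k,-)=2\,e_i\haken T$ in the Clifford algebra) together with the cross‑commutator terms $-2\kappa\lambda\sum_k e_k\big([e_i,e_k\haken T]+[e_i\haken T,e_k]\big)\psi$, and a Clifford simplification (it reduces to $\sum_k e_k([e_i,e_k\haken T]+[e_i\haken T,e_k])=8\,e_i\haken T$) collapses their sum to $-16s\,\kappa\,(X\haken T)\psi$. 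Finally the $\kappa$‑free part gathers $(e_i\haken\sigma_T)\psi$ from the Ricci identity, $2\lambda\sum_k e_k\big(T(e_i,e_k,-)\haken T\big)\psi$ from the $\lambda$‑piece of $A(T(e_i,e_k,-))$, and $2\lambda^2\sum_k e_k\big[e_i\haken T,e_k\haken T\big]\psi$; using $\sum_k(e_k\haken T)\wedge(e_k\haken T)=2\sigma_T$ and $X\cdot T=X\wedge T-X\haken T$ these reassemble as $(1-12\lambda^2)(X\haken\sigma_T)\psi+2(2\lambda^2+\lambda)\sum_k e_k\big(T(X,e_k,-)\haken T\big)\psi$. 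Replacing $e_i$ by an arbitrary $X$ (all terms are tensorial) gives the asserted identity.

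I expect the main obstacle to be precisely this last Clifford‑algebraic bookkeeping: propagating $X\haken T$, the double contractions $T(X,e_k,-)\haken T$ and $\sigma_T$ through sums of the form $\sum_k e_k\cdot(\text{Clifford product of two }3\text{-forms})$, repeatedly passing between the wedge normalisation of $\sigma_T$ and the Clifford (contraction) one, and carefully separating the summands with a repeated index — which produce the $\sigma_T$‑ and $\|T\|^2$‑type contributions — from those with all indices distinct, while keeping track of all signs and combinatorial prefactors. The conceptual steps (rewriting the Killing equation, differentiating, and invoking Theorem~\ref{thm.Ric-id}) are short; it is the organised evaluation of these Clifford sums that carries the weight of the proof.
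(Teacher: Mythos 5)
Your proposal is correct and follows essentially the same route as the paper: both rest on Theorem \ref{thm.Ric-id}, the torsion shift $\nabla^s_X\psi=\nabla^c_X\psi+\lambda(X\haken T)\psi$, $dT=2\sigma_T$, and the same Clifford identities, the only (cosmetic) difference being that you compute $\mathcal{R}^c$ directly from the rewritten equation $\nabla^c_X\psi=A(X)\psi$, whereas the paper computes $\mathcal{R}^s(X,Y)\psi$ twice — once from the Killing equation, once via its relation to $\mathcal{R}^c$ — and compares. All the intermediate simplifications you quote (e.g.\ $\sum_k e_k\,T(X,e_k)=2\,X\haken T$, the reduction of the cross-commutators to $-16s\kappa(X\haken T)\psi$, and the reassembly of the $\lambda^2$-terms via $(X\haken T)T-T(X\haken T)=-2X\haken\sigma_T$) check out against the paper's computation.
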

\begin{proof}
We will first establish a relation between the actions of $\mathcal{R}^c$ and
$\mathcal{R}^s$ on the Killing spinor $\psi$.
As an abbreviation, we set $\lambda:=\frac{1}{2(n-3)}$.
For the curvature endomorphism, we obtain by using the identity
$\nabla^s_X\psi=\nabla^c_X\psi+\lambda (X\haken T)\psi$
and the product rule for the covariant derivative of Clifford products:
\bea[*]
\mathcal{R}^s(X,Y)\psi&=&\nabla^s_X\nabla^s_Y\psi-\nabla^s_Y\nabla^s_X\psi-\nabla^s_{[X,Y]}\psi\\
            &=& \mathcal{R}^c(X,Y)\psi +\lambda\nabla^c_X((Y\haken
T)\psi)+\lambda(X\haken T)\nabla^c_Y\psi +\lambda^2 (X\haken
T)(Y\haken T)\psi\\
            & & -\lambda\nabla^c_Y((X\haken T)\psi)-\lambda(Y\haken T)\nabla^c_X\psi
-\lambda^2 (Y\haken T)(X\haken T)\psi -\lambda([X,Y]\haken T)\psi\\
            &=& \mathcal{R}^c(X,Y)\psi +\lambda(\nabla^c_X((Y\haken
T))\psi+\lambda(Y\haken T)\nabla^c_X\psi +\lambda (X\haken
T)\nabla^s_Y\psi\\
            & & -\lambda (\nabla^c_Y(X\haken T))\psi -\lambda(X\haken
T)\nabla^c_Y\psi - \lambda(Y\haken T)\nabla^s_X\psi -\lambda([X,Y]\haken T)\psi\\
            &=& \mathcal{R}^c(X,Y)\psi +\lambda(\nabla^c_X Y\haken
T)\psi+\lambda^2(X\haken T)(Y\haken T)\psi\\
            & & -\lambda(\nabla^c_Y(X\haken T))\psi-\lambda^2(Y\haken T)(X\haken
T)\psi -\lambda([X,Y]\haken T)\psi\,.
\eea[*]
From the general formula $\nabla_X(Y\haken \omega)=(\nabla_XY)\haken \omega+Y\haken
(\nabla_X \omega)$ and the assumption $\nabla^cT=0$, we conclude:
\bea[*]
\mathcal{R}^s(X,Y)\psi&=&\mathcal{R}^c(X,Y)\psi+\lambda^2(X\haken T)(Y\haken
T)\psi-\lambda^2(Y\haken T)(X\haken T)\psi\\
                      & &+\lambda(T(X,Y)\haken T)\psi\,.
\eea[*]
Together with the identities $(2)$ and $(4)$ from the compilation
of important formulas (Lemma \ref{lem.compilation}),
this allows us to compute the summand $\sum
e_k\mathcal{R}^s(X,e_k)\psi$,
\bea[*]
\sum e_k\mathcal{R}^s(X,e_k)\psi&=& \sum
e_k\mathcal{R}^c(X,e_k)\psi+3\lambda^2((X\haken T)T-T(X\haken T))\psi\\
                                & & -2\lambda^2\sum T(X,e_k)(e_k\haken T)\psi + \lambda \sum
e_k(T(X,e_k)\haken T)\psi\,.
\eea[*]
The second term can be simplified through formulas $(2)$ and $(6)$ of
Lemma \ref{lem.compilation},
\bdm
(X\haken T)T-T(X\haken T)=(XT^2+T^2X)= -\frac{1}{2}X \sigma_T-\sigma_T
X=-2X\haken\sigma_T\,.
\edm
The third term can be simplified as follows,
\bea[*]
\sum_k T(X,e_k)(e_k\haken T)&=&\sum_{k, m} T(X,e_k,e_m)e_m(e_k\haken T)\\
                             &=&\sum_m e_m \sum_k T(X,e_k, e_m) (e_k\haken T)\\
                             &=&-\sum_m e_m \sum_k T(X,e_m, e_k (e_k\haken T)\\
                             &=&-\sum_m e_m (T(X,e_m)\haken T)\,.\eea[*]
Thus, we obtain alltogether:
\bdm
\sum e_k\mathcal{R}^s(X,e_k)\psi=\sum
e_k\mathcal{R}^c(X,e_k)\psi-6\lambda^2(X\haken\sigma_T)\psi+(2\lambda^2+\lambda)\sum
e_k(T(X,e_k)\haken T)\psi\,.
\edm
Now we specialize to the case that $\psi$ is a Killing spinor with
$\nabla_X^s\psi=\kappa X\psi$. In this case,
$\mathcal{R}^s(X,Y)$ acts on $\psi$ by
\bea[*]
\kr^s(X,Y)\psi&=&\nabla^s_X\nabla^s_Y\psi-\nabla^s_Y\nabla^s_X\psi-\nabla^s_{[X,Y]}\psi\\
            &=& \kappa T^s(X,Y)\psi + \kappa^2(YX-XY)\psi\,,
\eea[*]
where $T^s=4sT$. We form again the desired sum. The relation
\bdm
\sum e_kT(X,e_k)\psi=-\sum T(X,e_k)e_k\psi=2(X\haken T)\psi
\edm
yields
\bdm
\sum e_k\mathcal{R}^s(X,e_k)\psi=8s\kappa(X\haken T)\psi+2(1-n)\kappa^2X\psi\,.
\edm
The claim now  follows from Theorem \ref{thm.Ric-id} if one observes that $dT=2\sigma_T$ holds for parallel torsion.
\end{proof}
By contracting the identity for the Ricci curvature once more, one obtains
a formula for the scalar curvature of a metric admitting Killing spinors with
torsion. However, one checks that this result coincides with the equation for
the scalar curvature stated in Lemma \ref{gen-Killing-eq}.

We now give a typical example how the previous result can be used to prove
non-existence results for Killing spinors with torsion.
\begin{cor}\label{cor.noKSonES}
A $5$-dimensional Einstein-Sasaki manifold $(M,g,\xi,\eta,\vphi)$
endowed with its characteristic
connection cannot admit Killing spinors with torsion.
\end{cor}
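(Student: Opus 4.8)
The plan is to evaluate the Ricci integrability condition of Theorem~\ref{thm.int-cond-KS} along the Reeb vector field of the Sasakian structure and to confront the resulting constraint on the Killing number with the scalar curvature relation of Lemma~\ref{gen-Killing-eq}. Throughout $n=5$, so $s=\tfrac12$ and $\lambda=\tfrac1{2(n-3)}=\tfrac14$. I would work in an orthonormal frame $e_1,\dots,e_4,\,e_5=\xi$ adapted to the contact structure, with $e_1,\dots,e_4$ spanning the horizontal distribution $\mathcal H=\ker\eta$ and fundamental form $\Phi:=e^{12}+e^{34}$, so that $d\eta=2\Phi$ and, in the Clifford bundle, $T=\eta\wedge d\eta=2\,\xi\cdot\Phi$; recall that $\xi$ is $\nabla^c$-parallel.

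First I would assemble the four terms on the right-hand side of Theorem~\ref{thm.int-cond-KS} evaluated at $X=\xi$. One has $\xi\haken T=d\eta=2\Phi$; since $\xi$ is $\nabla^c$-parallel, $\mathcal R^c(\cdot,\cdot)\xi=0$, and together with the skew symmetry of $\mathcal R^c$ in its last two arguments and the symmetry of $\Ric^c$ for parallel torsion this gives $\Ric^c(\xi)=0$; as $\sigma_T=\tfrac12(d\eta)\wedge(d\eta)=4\,e^{1234}$ is purely horizontal, $\xi\haken\sigma_T=0$; and writing $T(\xi,e_k)=2\varphi e_k$ and using $\varphi^2=-\mathrm{id}$ on $\mathcal H$, a short Clifford computation gives $\sum_k e_k\cdot\bigl(T(\xi,e_k)\haken T\bigr)\cdot\psi=16\,\xi\cdot\psi$. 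Plugging these in, with $-16s=-8$, $4(n-1)=16$, $1-12\lambda^2=\tfrac14$ and $2(2\lambda^2+\lambda)=\tfrac34$, collapses Theorem~\ref{thm.int-cond-KS} to
\bdm
16\,\kappa\,\Phi\cdot\psi \ = \ \bigl(16\kappa^2+12\bigr)\,\xi\cdot\psi .
\edm

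If $\kappa=0$ this forces $12\,\xi\cdot\psi=0$, impossible. If $\kappa\neq0$, I would set $\Phi\cdot\psi=b\,\xi\cdot\psi$ with $b=\kappa+\tfrac3{4\kappa}$; applying $\Phi$ once more and using $\Phi\xi=\xi\Phi$, $\Phi^2=-2+2\omega$ with $\omega:=e_1e_2e_3e_4$, and $\omega^2=1$, one gets $\omega\cdot\psi=(1-\tfrac{b^2}{2})\psi$, hence $b^2\in\{0,4\}$. The case $b=0$ gives $\kappa^2=-\tfrac34$, impossible; so $b=\pm2$, $\omega\cdot\psi=-\psi$, and $T\cdot\psi=2\,\xi\Phi\cdot\psi=-2b\,\psi=\mp4\,\psi$, so $\psi$ lies in a single $T$-eigenbundle $\Sigma_\mu$ with $\mu^2=16$, while $b=\pm2$ turns $b=\kappa+\tfrac3{4\kappa}$ into $4\kappa^2\mp8\kappa+3=0$, i.e.\ $\kappa\in\{\pm\tfrac12,\pm\tfrac32\}$. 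On the other hand, with $\psi\in\Sigma_\mu$, $\mu^2=16$, and $\Scalg=n(n-1)=20$ for an Einstein--Sasaki $5$-manifold, the quadratic of Lemma~\ref{gen-Killing-eq} reduces to $(\kappa+\tfrac\mu4)^2=\tfrac1{20}$, so $\kappa=\pm1\pm\tfrac1{2\sqrt5}$; this set is disjoint from $\{\pm\tfrac12,\pm\tfrac32\}$, the desired contradiction, so no Killing spinor with torsion exists.

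The step I expect to require the most care is the Clifford-algebraic bookkeeping in the last two steps -- particularly the value of $\sum_k e_k\bigl(T(\xi,e_k)\haken T\bigr)\psi$ and the identities $T=2\,\xi\Phi$, $\Phi^2=-2+2\omega$, $\omega^2=1$ -- because these hinge on the precise Sasakian normalisation of $\eta$ (equivalently on $\|T\|^2$ and on the torsion eigenvalues $\mu\in\{0,\pm4\}$), which must be tracked consistently: a stray factor of two would change every number entering the final comparison.
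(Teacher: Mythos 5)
Your proof is correct, and while it rests on the same two pillars as the paper's argument -- the integrability condition of Theorem \ref{thm.int-cond-KS} and the quadratic equation for $\kappa$ from Lemma \ref{gen-Killing-eq} -- the execution is genuinely different and, in fact, cleaner. The paper first tabulates the admissible Killing numbers from Lemma \ref{gen-Killing-eq} (using $\Scalg=20$, $\mu\in\{0,\pm4\}$) and then evaluates the integrability condition at a \emph{horizontal} vector $X=e_1$, where $\Ric^c(e_1)=2\,\Id$; ruling out a kernel there requires choosing an explicit spin representation and checking that a $4\times 4$ determinant is nonzero for each candidate $\kappa$ -- a computation one essentially has to do by machine. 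You instead evaluate at $X=\xi$, where $\Ric^c(\xi)=0$ and $\xi\haken\sigma_T=0$, so the condition collapses to the scalar relation $16\kappa\,\Phi\psi=(16\kappa^2+12)\,\xi\psi$; the subsequent Clifford algebra ($\Phi^2=-2+2\omega$, $\omega^2=1$) is hand-checkable and I have verified it, including $\sum_k e_k(T(\xi,e_k)\haken T)\psi=16\,\xi\psi$ and the constants $-16s=-8$, $1-12\lambda^2=\tfrac14$, $2(2\lambda^2+\lambda)=\tfrac34$. Your route buys two things: (i) it \emph{derives} that $\psi$ must lie in a single eigenbundle $\Sigma_{\mp4}$ (which is needed before Lemma \ref{gen-Killing-eq} can legitimately be applied, a point the paper glosses over), and (ii) the final contradiction is the transparent disjointness of $\{\pm\tfrac12,\pm\tfrac32\}$ from $\{\pm1\pm\tfrac{\sqrt5}{10}\}$ rather than an opaque determinant evaluation. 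The only caution, which you already flag, is the normalisation $d\eta=2\Phi$, $\|T\|^2=8$, $\mu\in\{0,\pm4\}$; your constants are consistent with the paper's frame, and the conclusion is insensitive to the residual sign ambiguities in $\varphi$ since the whole argument is symmetric under $(b,\mu,\kappa)\mapsto(-b,-\mu,-\kappa)$.
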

\begin{proof}
It is known that a $5$-dimensional Einstein-Sasaki manifold admits a local
frame such that
\bdm
\xi\ \cong\ \eta\ =\ e_5, \quad \ d\eta\ =\ 2(e_1\wedge e_2+ e_3\wedge
e_4),\quad
T^c\ =\ \eta\wedge d\eta\ =\ 2(e_1\wedge e_2+ e_3\wedge e_4)\wedge e_5.
\edm
Furthermore, in this frame, $\Scal^g=20$, $\|T\|=8$ and the eigenvalues of
$T$ are $0,\pm 4$.  Hence, Lemma \ref{gen-Killing-eq} allows us to compute
all possible values of the Killing number $\kappa$, leading to the table
\bdm
\begin{array}{|c|c|c|c|}\hline
\mu & 0^{\phantom{1^1}} & 4 & -4 \\[1mm] \hline
\kappa & \pm \frac{1}{2} & -1 \pm \frac{\sqrt{5}^{\phantom{1^1}}}{10} & 1 \pm \frac{\sqrt{5}}{10}\\[1mm]
\hline
\end{array}
\edm
In this situation, $s=1/2$ and $\lambda=1/4$. For the Ricci curvature,
observe that these are related by
\bdm
\Ric^g(X,Y)\ =\ \Ric^c(X,Y) +\frac{1}{4}\sum_{i=1}^5 g(T^c(X,e_i), T^c(Y,e_i)).
\edm
Thus, the Einstein condition $\Ric^g(e_i)=4\cdot \Id$ implies
$\Ric^c(e_i)=2\cdot \Id$ for $i=1,\ldots,4$, $\Ric^c(e_5)=0$.
Now pick your favorite spin
representation and evaluate with arbitrary $\kappa$ and for $X=e_1$
the expression (just a $(4\x 4)$-matrix)
\bdm
2\cdot \Id  -\left[-8\kappa(e_1\haken
T)+16 \kappa^2 e_1 +\frac{1}{4}(e_1\haken\sigma_T)
+\frac{3}{4} \sum_{k=1}^5 e_k(T(e_1,e_k)\haken T)\right]
\edm
and check that for all possible $\kappa$ values above, the determinant is
nonzero.
Hence, this endomorphism on the spin bundle has no kernel, that is, there
cannot exist a spinor field $\phi$ (not even in a point) satisfying the
integrability condition from Theorem \ref{thm.int-cond-KS} for $X=e_1$.
\end{proof}
%
\section{Curvature properties for families of connections}\noindent
%
The curvature of a metric connection $\nabla$ 
with parallel torsion $T\in\Lambda^3(TM^n)$ is known to have some
special properties. In this section, we show how some of these
properties can be transferred to a $1$-parameter family of connections
in which only one connection has this property, matching of course
exactly the situation encountered in this paper. 

\begin{thm}\label{thm.curv-splitting}
Assume that $(M,g)$ carries a $1$-parameter family of metric connections
$\nabla^s$ with skew torsion $T\in\Lambda^3(M)$
\bdm
\nabla^s_X Y \ =\ \nabla^g_X Y + 2s\, T(X,Y,-),
\edm
and that $\nabla^{c}T=0$, where $\nabla^c$ is the connection
corresponding to $s=1/4$. Then, for all $s\in\R$,
the covariant derivative of the torsion is given  by
\bdm
\nabla^s_X T(U,V,W)\ =\ \left[2s-\frac{1}{2}\right]\, \sigma_T(U,V,W,X),
\edm
and  the first Bianchi identity reduces to
\bdm
\stackrel{X,Y,Z}{\mathfrak{S}} \mathcal{R}^s(X,Y,Z,V)\ =\
s \left[6-8s\right]\, \sigma_T(X,Y,Z,V).
\edm
This implies, in particular, that the curvature is symmetric
under intertwining of blocks,
\bdm
\mathcal{R}^s(X,Y,U,V)\ =\ \mathcal{R}^s(U,V,X,Y).
\edm
Furthermore, the  Ricci tensor $\Ric^s$ is symmetric.
\end{thm}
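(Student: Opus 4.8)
\emph{Strategy.} The plan is to reduce every clause of the theorem to one algebraic mechanism. The starting observation is that the whole family is a line through the parallel connection: $\nabla^s_XY=\nabla^c_XY+(2s-\tfrac12)\,T(X,Y,-)$, where $T(X,Y,-)$ denotes the tangent vector metrically dual to the $1$-form $T(X,Y,\cdot)$. Thus every object attached to $\nabla^s$ equals the corresponding object for $\nabla^c$ plus corrections polynomial in $T$, and all such corrections will be massaged by the elementary identity (part of Lemma~\ref{lem.compilation}, or read off directly from $\sigma_T=\tfrac12\sum_i(e_i\haken T)\wedge(e_i\haken T)$)
\[
\sigma_T(A,B,C,D)\ =\ T(T(A,B,-),C,D)-T(T(A,C,-),B,D)+T(T(A,D,-),B,C),
\]
which turns a self-contraction of $T$ into $\sigma_T$.

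\emph{The derivative of $T$.} First I would compute $(\nabla^s_XT)(U,V,W)$ by the Leibniz rule and subtract $(\nabla^c_XT)(U,V,W)=0$. What remains is $-(2s-\tfrac12)$ times the three terms in which $\nabla^s-\nabla^c=(2s-\tfrac12)T(X,\cdot,-)$ hits the three arguments of $T$; reordering them with the antisymmetry of $T$ and applying the identity above collapses the bracket to $\sigma_T(X,U,V,W)$, whence $(\nabla^s_XT)(U,V,W)=-(2s-\tfrac12)\sigma_T(X,U,V,W)=(2s-\tfrac12)\sigma_T(U,V,W,X)$ since $\sigma_T\in\Lambda^4$. (Sanity check: this vanishes at $s=\tfrac14$.)

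\emph{The Bianchi identity.} Here I would feed the result just obtained into the general first Bianchi identity for a connection with torsion tensor $\Theta^s=4s\,T$,
\[
\mathfrak{S}_{X,Y,Z}\,\mathcal{R}^s(X,Y)Z\ =\ \mathfrak{S}_{X,Y,Z}\big\{\Theta^s(\Theta^s(X,Y),Z)+(\nabla^s_X\Theta^s)(Y,Z)\big\},
\]
or, equivalently, start from the known identity $\mathfrak{S}_{X,Y,Z}\mathcal{R}^c(X,Y,Z,V)=\tfrac12 dT(X,Y,Z,V)=\sigma_T(X,Y,Z,V)$ (used in the proof of Theorem~\ref{thm.Ric-id}; recall $dT=2\sigma_T$ when $\nabla^cT=0$) together with the standard formula relating the curvatures of $\nabla^c$ and $\nabla^s=\nabla^c+aT$, $a:=2s-\tfrac12$. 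Since $\nabla^cA=a\nabla^cT=0$, that formula reduces to $\mathcal{R}^s(X,Y,Z,V)=\mathcal{R}^c(X,Y,Z,V)+a\,T(T(X,Y,-),Z,V)+a^2[T(X,T(Y,Z,-),V)-T(Y,T(X,Z,-),V)]$; taking the cyclic sum over $X,Y,Z$ and using the identity above, the three pieces contribute $\sigma_T$, $\sigma_T$ and $-2\sigma_T$, so the total is $(1+a-2a^2)\sigma_T=s(6-8s)\sigma_T$ after substituting $a=2s-\tfrac12$.

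\emph{Pair symmetry and symmetry of $\Ric^s$.} These follow the textbook argument once the Bianchi defect is known. The tensor $\mathcal{R}^s$ is skew in its first two slots by definition and skew in its last two because $\nabla^s$ is metric; adding the four cyclically permuted Bianchi identities with alternating signs makes the right-hand sides cancel — the one point that must be checked is that the defect $s(6-8s)\sigma_T$ is a genuine $4$-form, so the corresponding alternating sum of its permutations is zero — while the left-hand side reduces, via the two skew-symmetries, to $2[\mathcal{R}^s(X,Y,U,V)-\mathcal{R}^s(U,V,X,Y)]$, giving pair symmetry. Then $\mathcal{R}^s(e_i,X,Y,e_i)=\mathcal{R}^s(Y,e_i,e_i,X)=\mathcal{R}^s(e_i,Y,X,e_i)$, and summing over $i$ gives $\Ric^s(X,Y)=\Ric^s(Y,X)$. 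The hard part is nowhere conceptual: it is purely the sign and permutation bookkeeping in the self-contraction identity, in the three cyclic sums, and in the four-term Bianchi combination — the single genuinely load-bearing observation being that the Bianchi defect is totally antisymmetric, which is exactly what lets the torsion-free proof of pair symmetry carry over verbatim.
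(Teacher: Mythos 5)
Your argument is correct and follows essentially the same route as the paper: Leibniz rule plus the self-contraction identity expressing $\sigma_T$ through $T(T(\cdot,\cdot,-),\cdot,\cdot)$ for the formula for $\nabla^s T$, substitution into the first Bianchi identity with torsion (the paper uses the packaging $\mathfrak{S}\,\mathcal{R}^s = dT^s + \nabla^s_V T^s - \sigma_{T^s}$ with $T^s=4sT$, which is equivalent to both of your variants and gives the same coefficient $s(6-8s)$), and the standard four-term alternating sum for pair symmetry, whose cancellation indeed hinges only on $\sigma_T$ being a $4$-form. The one point where you genuinely diverge is the final clause: you obtain the symmetry of $\Ric^s$ by contracting the pair symmetry, whereas the paper identifies the antisymmetric part of $\Ric^s$ with the divergence of $T$ and observes $\delta^s T=\delta^c T=0$; both are valid, and yours is slightly more self-contained since it needs no external identity for the antisymmetric part of the Ricci tensor.
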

\begin{proof}
The connections are related by
\bdm
\nabla^s_X Y \ =\ \nabla^{c}_X Y +  \left[2s-\frac{1}{2}\right] T(X,Y,-),
\edm
thus the covariant derivatives satisfy
\bea[*]
\nabla^s_X T(U,V,W) &=& \nabla^c_X T(U,V,W)\\
&& -  \left[2s-\frac{1}{2}\right]\left[ T(T(X,U),V,W) + T(U, T(X,V), W)+ T(U,V, T(X,W)) \right]\\
&=& 0 + \left[\frac{1}{2}-2s\right] \left[- T(W,V,T(X,U)) -  T(U, W, T(X,V))+ T(U,V, T(X,W))
\right]\\
&=&  \left[\frac{1}{2}-2s\right] \left[- g(T(W,V),T(X,U)) -  g(T(U, W), T(X,V))+ g(T(U,V), T(X,W))
\right]\\
&=& \left[\frac{1}{2}-2s\right] \left[- g(T(V,W),T(U,X)) -  g(T(W,U), T(V,X))- g(T(U,V), T(W,X))
\right]\\
&=& \left[\frac{1}{2}-2s\right] \left[- \sigma_T(U,V,W,X) \right]
\ =\ \left[2s-\frac{1}{2}\right] \,\sigma_T(U,V,W,X)
\eea[*]
by the definition of $\sigma_T$ \cite[Dfn A.1]{Agricola06}.
Consider the first Bianchi identity  \cite[Thm 2.6]{Agricola06}
\bdm
\stackrel{X,Y,Z}{\mathfrak{S}} \mathcal{R}^s(X,Y,Z,V)\ =\ dT^s(X,Y,Z,V)+
\nabla^s_V T^s(X,Y,Z) - \sigma_{T^s}(X,Y,Z,V),
\edm
where $T^s=4sT$ and $\sigma_{T^s}= 16 s^2 \sigma_T$ are the corresponding
quantities of the connection $\nabla^s$. Since $\nabla^cT=0$, its
torsion $T$  satisfies $d T = 2 \sigma_{T^c}= 2\sigma_T$.
A routine calculation yields then the claimed formula.
The property of the curvature tensor  follows by the same
symmetrization argument as in \cite[Remark 2.3]{Agricola06}.

For the symmetry of the Ricci tensor,
we argue as follows: for alle parameters $s$, the
$\nabla^s$-divergences $\delta^s$ of $T$ coincide, $\delta^s T=\delta^g T$
(see \cite[Prop. A.2]{Agricola06}). But  $\nabla^c T=0$ implies
$\delta^c T=0$, so $\delta^s T=0$, and  this
is precisely the antisymmetric part of the Ricci tensor, so
symmetry follows at once (\cite{Friedrich&I1}, \cite[Thm A.1]{Agricola06}).
\end{proof}
%
\section{Compilation of important formulas}\label{formulas}\noindent
%
We compile some remarkable identities that are used throughout
this article. All of them are routine exercises, so we abstain
from giving proofs or detailed references for all.
The less obvious formulas ($4$-$6$) can be found in
\cite{Bismut} and \cite{Friedrich&I1} (see also \cite{Agricola06}), though 
even earlier publications could be  possible.
Recall the definition of the important $4$-form $\sigma_T$
derived from any $3$-form $T$:
\bdm
\sigma_T\ :=\ \frac{1}{2}\sum_i (e_i\haken T)\wedge (e_i\haken T).
\edm
\begin{lem}\label{lem.compilation}
For a $3$-form $T$, a $k$-form $\omega$, a vector field $X$, an orthonormal frame
$e_1,\ldots, e_n$  and spinor fields  $\psi,\vphi$, the following identities
hold:
\begin{enumerate}
\item $X\cdot T=X\wedge T-X\haken T,\quad T\cdot X = -X\wedge T-X\haken T$
\item $X\cdot T+ T\cdot X = -2 X\haken T$, more generally,
$X\cdot \omega -(-1)^k \omega\cdot X = -2X\haken \omega$.
\item $\langle T\cdot\psi,\vphi\rangle = \langle \psi, T\cdot \vphi\rangle $
\item $\displaystyle\sum_{i=1}^n (e_i\haken T)e_i =\sum_{i=1}^n e_i (e_i\haken T) = 3 T$
\item $\displaystyle\sum_{i=1}^n (e_i\haken T)\cdot (e_i\haken T)=
\sum_{i=1}^n (e_i\haken T)\wedge (e_i\haken T) -3\|T\|^2=2\sigma_T-3\|T\|^2 $
\item $\displaystyle T^2 = -\sum_{i=1}^n (e_i\haken T)\wedge (e_i\haken T)
+\|T\|^2 = -2\sigma_T+\|T\|^2$
\item $\displaystyle\sum_{i=1}^n e_i \cdot (e_i\wedge T) = (3-n)T$
\item $\displaystyle \sum_{j=1}^n T(X,e_j)\cdot e_j = -2 X\haken T $
\end{enumerate}
\end{lem}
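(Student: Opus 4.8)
The plan is to obtain all eight identities from the two structural relations between Clifford multiplication, the exterior product and the interior product: for a vector $X$ and a $p$-form $\omega$, regarded inside the Clifford bundle,
\bdm
X\cdot\omega\ =\ X\hut\omega - X\haken\omega,\qquad
\omega\cdot X\ =\ (-1)^{p}\bigl(X\hut\omega + X\haken\omega\bigr),
\edm
together with the Leibniz rule $X\haken(\alpha\hut\beta)=(X\haken\alpha)\hut\beta+(-1)^{|\alpha|}\alpha\hut(X\haken\beta)$ and the elementary resolution identity $\sum_i e_i\hut(e_i\haken\omega)=p\,\omega$ for a $p$-form $\omega$. Everything is pointwise and purely algebraic in $\Cl(T_xM)$, so I would fix one orthonormal frame $e_1,\dots,e_n$ throughout.

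Identities $(1)$ and $(2)$ are the two structural relations specialised to $\omega=T$ ($p=3$) and their sum, the general form in $(2)$ being the second structural relation once more. For $(7)$ I would put $\alpha=e_i\hut T$ into $e_i\cdot\alpha=e_i\hut\alpha-e_i\haken\alpha$, use $e_i\hut(e_i\hut T)=0$ and the Leibniz rule $e_i\haken(e_i\hut T)=T-e_i\hut(e_i\haken T)$, and then sum, the resolution identity giving $\sum_i e_i\hut(e_i\haken T)=3T$ and hence the value $(3-n)T$. For $(4)$ the double interior products $e_i\haken(e_i\haken T)$ vanish, so both $\sum_i e_i\cdot(e_i\haken T)$ and $\sum_i(e_i\haken T)\cdot e_i$ collapse to $\sum_i e_i\hut(e_i\haken T)=3T$. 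Identity $(8)$ follows by writing $T(X,e_j)\cdot e_j=T(X,e_j)\hut e_j-g(T(X,e_j),e_j)$: the scalar terms vanish by skew-symmetry of $T$ in the last two arguments, and after re-indexing the $2$-form part one recognises $-2\,X\haken T$.

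Identity $(3)$ I would prove from the fact that Clifford multiplication by a vector is skew-adjoint for the natural Hermitian product on spinors: the adjoint of a decomposable $p$-form $e_{i_1}\cdots e_{i_p}$ is $(-1)^{p}e_{i_p}\cdots e_{i_1}=(-1)^{p(p+1)/2}e_{i_1}\cdots e_{i_p}$, and for $p=3$ the sign is $+1$. For $(5)$ I would use the identity $\alpha\cdot\alpha=\alpha\hut\alpha-\|\alpha\|^{2}$, valid for any $2$-form $\alpha$ because $\alpha\cdot\alpha$ is reversal-invariant and so has no degree-$2$ component; applying it to $\alpha=e_i\haken T$ and summing over $i$, the scalar part is $\sum_i\|e_i\haken T\|^{2}=3\|T\|^{2}$ since each index triple of $T$ is counted three times, and $\sum_i(e_i\haken T)\hut(e_i\haken T)=2\sigma_T$ by the definition of $\sigma_T$.

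The one genuinely computational point, and the step I expect to be the main obstacle, is $(6)$. Since reversal sends the $3$-form $T$ to $-T$, the element $T\cdot T$ is reversal-invariant, so its degree-$6$ and degree-$2$ components vanish and $T^{2}=[T^{2}]_0+[T^{2}]_4$; the scalar part is $[T^{2}]_0=\|T\|^{2}$, coming from the terms $e_ae_be_ce_ae_be_c=1$ with $a<b<c$. Everything then reduces to identifying $[T^{2}]_4$ with $-\sum_i(e_i\haken T)\hut(e_i\haken T)=-2\sigma_T$, which forces one to reorder products $e_ae_be_ce_de_ee_f$ in the Clifford algebra for index triples $\{a,b,c\},\{d,e,f\}$ meeting in exactly one index, extract the $-1$ coming from the repeated index, and compare the resulting signed $4$-forms against $\sum_i(e_i\haken T)\hut(e_i\haken T)$ term by term; this is a finite but sign-sensitive bookkeeping exercise (the same count also exhibits the pairwise cancellation of the degree-$2$ and degree-$6$ contributions under $\{a,b,c\}\leftrightarrow\{d,e,f\}$). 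Alternatively one may simply cite \cite{Bismut}, \cite{Friedrich&I1} for $(6)$, as the authors do, and then observe that $(5)$ and $(6)$ are equivalent given the already established relation $2\sigma_T=\sum_i(e_i\haken T)\hut(e_i\haken T)$.
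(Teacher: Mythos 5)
Your proposal is correct, and it actually does more than the paper, which offers no proof at all: the authors declare all eight identities ``routine exercises'' and refer to \cite{Bismut}, \cite{Friedrich&I1}, \cite{Agricola06} for $(4)$--$(6)$. Your two structural relations plus the Leibniz rule and the resolution identity $\sum_i e_i\hut(e_i\haken\omega)=p\,\omega$ do yield $(1)$, $(2)$, $(4)$, $(7)$, $(8)$ exactly as you describe; the reversal (principal anti-automorphism) argument correctly kills the odd-degree-count components in $(5)$ and $(6)$ (degree $2$ and $6$ pick up a sign $-1$ under reversal while $\alpha^2$ and $T^2$ are reversal-invariant), and your sign checks $(e_ae_be_c)^2=+1$ and $\sum_i\|e_i\haken T\|^2=3\|T\|^2$ are right. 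Two small caveats. First, the degree-$4$ bookkeeping in $(6)$ is only described, not executed; it does close up (for triples meeting in exactly one index the two orderings $e_Ie_J$ and $e_Je_I$ contribute with the \emph{same} sign, each equal to $-e_p\hut e_q\hut e_r\hut e_s$ up to the sorting signs absorbed into $T_{ipq}T_{irs}$, which reproduces $-2\sigma_T$), but as written this step is a promissory note, which is acceptable given that the paper itself defers to the literature here. Second, your closing remark that ``$(5)$ and $(6)$ are equivalent given $2\sigma_T=\sum_i(e_i\haken T)\hut(e_i\haken T)$'' is not right as stated: the left-hand sides are Clifford squares of different elements (the $2$-forms $e_i\haken T$ versus the $3$-form $T$), and neither identity follows formally from the other; each independently expresses its degree-$4$ part through $\sigma_T$. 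Since you give independent arguments for both, this misstatement does not damage the proof, but it should be deleted or rephrased.
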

%


\end{document}